\numberwithin{figure}{section}
\definecolor{my-blue}{rgb}{0.0,0.0,0.6}
\definecolor{my-red}{rgb}{0.5,0.0,0.0}
\definecolor{my-green}{rgb}{0.0,0.5,0.0}
\definecolor{nicos-red}{rgb}{0.75,0.0,0.0}
\newtheorem{theorem}{\sc Theorem}[section]
\newtheorem{lemma}[theorem]{\sc Lemma}
\numberwithin{equation}{section}
\theoremstyle{remark}
\newtheorem{definition}[theorem]{\sc Definition}
\newcommand{\be}{\begin{equation}}
\newcommand{\ee}{\end{equation}}
\newcommand{\beq}{\begin{equation}}
\newcommand{\eeq}{\end{equation}}
\newcommand{\nn}{\nonumber}
\providecommand{\abs}[1]{\vert#1\vert}
\newcommand{\fl}[1]{\lfloor{#1}\rfloor} 
\newcommand{\ce}[1]{\lceil{#1}\rceil}
  \def\cD{\mathcal{D}}
\def\cF{\mathcal{F}}
\def\cN{\mathcal{N}}
\def\cS{\mathcal{S}} \def\cT{\mathcal{T}}
\def\cI{\mathcal{I}}
\def\cA{\mathcal{A}}
\def\bE{\mathbb{E}} 
\def\bP{\mathbb{P}}
\def\bR{\mathbb{R}}
\def\bZ{\mathbb{Z}}
 \def\Z{\bZ}  \def\R{\bR}
  \def\uvec{\mathbf{u}}
\def\evec{\mathbf{e}}
\def\dvec{\mathbf{d}}
\def\Ivec{\mathbf{I}} \def\Jvec{\mathbf{J}}
\def\Yvec{\mathbf{Y}}
\def\w{\omega}
\def\e{\varepsilon}
\def\mP{\mathbf{P}}
\def\m1{\mathbf{1}}
 \def\cJ{\mathcal{J}}  
 \def\wt{\widetilde}  \def\wh{\widehat}  \def\wc{\widecheck}
\def\E{\bE}
\def\P{\bP} 
\def\trigamf{\psi_1} 
\def\funct lp{L} 
\def\funct lpbar{\bar L} 
\def\cA{\mathcal A}
\def\cE{{\mathcal E}}
\DeclareMathOperator{\ex}{ex}
\def\og{\preccurlyeq}
\DeclareMathOperator{\Vvar}{\mathbb{V}ar}
\DeclareMathOperator{\distance}{dist}
\newcommand{\rhodown}[1]{{#1}_{\scaleobj{1.2}\star}} 
\newcommand{\rhoup}[1]{{#1}^{\scaleobj{1.2}\star}} 
 \def\zevec{\mathbf{0}}  
\definecolor{darkgreen}{rgb}{0.0,0.5,0.0}
\definecolor{darkblue}{rgb}{0.0,0.0,0.3}
\definecolor{nicosred}{rgb}{0.65,0.1,0.1}
\definecolor{light-gray}{gray}{0.7}
\def\cif1{v}   
\def\deq{\overset{d}=}
\def\Yw{Y}  
\def\rim#1{\widehat{#1}} 
\def\ed{e}
\newcommand{\lzb}{\llbracket}   
\newcommand{\rzb}{\rrbracket}   
\newcommand\bbullet{{\raisebox{0.5pt}{\scaleobj{0.6}{\bullet}}}} 
\newcommand\brbullet{{\raisebox{-0.5pt}{\scaleobj{0.5}{\bullet}}}} 
\newcommand\cbullet{{\raisebox{1pt}{\scaleobj{0.6}{\bullet}}}}
\def\tsp{\hspace{0.55pt}}  
\def\tspa{\hspace{0.7pt}}  
\def\tspb{\hspace{0.9pt}}
\def\Dop{D}    
\def\Rop{R}  
\def\Sop{S}    
          \def\wgtd{Y}     
\def\fo{\phi}
 \def\wt{\widetilde}  \def\wh{\widehat} 
 \def\ex{{\tau}}
 \def\OAbP{(\Omega, \cA, \P)}  
\def\mwalk{M}      
\def\swalk{S}      
\def\pathsp{\mathbb{X}}   
\newcounter{usedm}
\newcounter{usedn}
\newcommand*{\lppwl}[6]{
	
	\FPeval{\m}{round(#5*(#3-#1)/(#3+#4-#1-#2),0)}
	
	\FPeval{\n}{round(#5*(#4-#2)/(#3+#4-#1-#2),0)}
	
	\FPeval{\stlength}{(#3+#4-#1-#2)/#5}
	
	\setcounter{usedm}{0}
	\setcounter{usedn}{0}
	
	\foreach\i in{1,...,{#5}}{
		\FPrandom{\x}
		
		\FPeval{\y}{(\m-\theusedm)/(\m+\n-\theusedm-\theusedn)}
		
		\FPeval{\startx}{#1+(\theusedm)*\stlength}
		\FPeval{\starty}{#2+(\theusedn)*\stlength}
		
		\ifthenelse{\lengthtest{\x pt > \y pt}}{
			\FPset{\endx}{\startx}
			\FPadd{\endy}{\starty}{\stlength}
			\stepcounter{usedn}
		}
		{
			\FPadd{\endx}{\startx}{\stlength}
			\FPset{\endy}{\starty}
			\stepcounter{usedm}
		}
		\draw[#6](\startx,\starty) -- (\endx,\endy);
	};
}
\begin{document}
	
	\title[Non-existence of bi-infinite polymers]
	{Non-existence of bi-infinite polymer Gibbs measures}
	
	\author[O.~Busani]{Ofer Busani}
	\address{Ofer Busani\\ University of Bristol\\  School of Mathematics\\ Fry Building\\ Woodland Rd.\\   Bristol BS8 1UG\\ UK.}
	\email{o.busani@bristol.ac.uk}
	\urladdr{https://people.maths.bris.ac.uk/~di18476/}
	\thanks{O. Busani was supported by EPSRC's EP/R021449/1 Standard Grant.} 
	
	\author[T.~Sepp\"al\"ainen]{Timo Sepp\"al\"ainen}
	\address{Timo Sepp\"al\"ainen\\ University of Wisconsin-Madison\\  Mathematics Department\\ Van Vleck Hall\\ 480 Lincoln Dr.\\   Madison WI 53706-1388\\ USA.}
	\email{seppalai@math.wisc.edu}
	\urladdr{http://www.math.wisc.edu/~seppalai}
	\thanks{T.\ Sepp\"al\"ainen was partially supported by  National Science Foundation grant DMS-1854619 and by the Wisconsin Alumni Research Foundation.}

	\keywords{Busemann function, directed polymer, geodesic, Gibbs measure, Kardar-Parisi-Zhang universality, inverse-gamma polymer, log-gamma polymer,  random environment, random walk}
	\subjclass[2000]{60K35, 60K37} 
	\date{\today}
	\begin{abstract} 
		We show that nontrivial bi-infinite polymer Gibbs measures do not exist in typical environments in the inverse-gamma (or log-gamma) directed polymer model on the planar square lattice. The precise technical result is that, except for measures supported on straight-line paths, such Gibbs measures do not exist in almost every environment when the weights are independent and identically distributed inverse-gamma random variables. The proof proceeds by showing that when two endpoints of a point-to-point polymer distribution are taken to infinity in opposite directions but not parallel to lattice directions, the midpoint of the polymer path escapes.  The proof is based on couplings, planar comparison arguments, and a recently discovered joint distribution of Busemann functions.  
	\end{abstract}
	\maketitle
	\tableofcontents
	
	\section{Introduction}  
	


\subsection{Directed polymers} 
The {\it directed polymer model} is a stochastic model of a random path that interacts with a random environment.  In its simplest formulation on an integer lattice $\Z^d$, positive random weights $\{Y_x\}_{x\in\Z^d}$ are assigned to the lattice vertices and the quenched probability of a finite lattice path $\pi$ is declared to be proportional to the product $\prod_{x\tspa\in\tspa\pi} Y_x$. 
In the usual Boltzmann-Gibbs formulation we take $Y_x=e^{-\beta \w_x}$ so that the energy of a path is proportional to the potential  $\sum_{x\tspa\in\tspa\pi} \w_x$ and the strength of the coupling between  the path $\pi$ and the environment $\w$ is modulated by the inverse temperature parameter $\beta$.  

 The directedness of the model means that some spatial  direction $\uvec\in\R^d$  represents time and the admissible paths $\pi$ are required to be $\uvec$-directed.   One typical  example  would be to  require that the steps of $\pi$ are of the form $(\pm\evec_i,1)\in\Z^d$ for $i\in\{1,\dotsc,d-1\}$.  In this example  the time direction is $\uvec=\evec_d$, space is the $(d-1)$-dimensional lattice $\Z^{d-1}$,  and $\pi$ is a  simple random walk path in space-time.  Another common choice is to restrict   the steps of $\pi$   to directed basis vectors $\{\evec_i\}_{1\le i\le d}$ so that time proceeds in the diagonal direction $\uvec=\evec_1+\dotsm+\evec_d$.  

This  model was introduced in the statistical 
physics literature by Huse and Henley in 1985 \cite{huse-henl} as a model of the domain wall in an Ising model with impurities. Since the polymer can be viewed as a perturbation of a simple random walk, a natural question to investigate is whether the walk is diffusive across large  scales.  The early  rigorous mathematical work  by Imbrie and Spencer
\cite{imbr-spen}  and Bolthausen  \cite{bolt-cmp-89} in the late 1980s established that in dimensions $d\ge 4$ (one time dimension plus at least three spatial dimensions) the path behaves diffusively for small enough $\beta$. This behavior is now known as {\it weak disorder}.   Later work \cite{come-varg-06, laco-10} established that in lower dimensions $d\in\{2,3\}$ or if $\beta$ is large enough,  the polymer model exhibits {\it strong disorder}, characterized by localization.  Excellent reviews of this development can be found in \cite{come-16, denholl-polymer}. 

Since the early interest in the phase transition between weak and strong disorder, the study of directed polymers has branched out in several directions.  The discovery of exactly solvable 1+1 dimensional models, the first of which were the O'Connell-Yor Brownian directed polymer \cite{oconn-yor-01} and the inverse-gamma, or log-gamma, polymer \cite{sepp-12-aop-corr}, led to rigorous proofs that directed polymers are members of the Kardar-Parisi-Zhang (KPZ) universality class \cite{boro-corw-ferr-14, boro-corw-reme,sepp-valk-10}.  This had been expected since directed polymers are positive temperature analogues of  directed last-passage percolation, for which predictions of KPZ universality were first  rigorously verified \cite{baik-deif-joha-99, joha}.  On KPZ we refer the reader to the recent reviews \cite{corw-16-rev, corw-18-ams, quas-14, quas-spoh-15}. 
 
 Through Feynman-Kac-type representations, directed polymers provide  solutions to stochastic partial differential equations.  Early work in this direction by Kifer \cite{kife-97} connected a polymer in the weak disorder regime with a stochastic Burgers equation.  The significant current  example of this, which also takes us back to the study of KPZ universality,  is the connection between the continuum directed random polymer and the stochastic heat equation with multiplicative noise, whose logarithm is  the Hopf-Cole solution of the KPZ equation. We refer to Corwin's review  \cite{corw-12-rev}.  
 
 \subsection{Infinite polymers} 
 Another natural direction of polymer research is the  limit as the path length is taken to infinity.  This limit can be readily  taken    in weak disorder. This can be found in the work of Comets and Yoshida \cite{come-yosh-aop-06}. 
  In strong disorder the existence of limiting infinite quenched polymer measures    was first proved in 1+1 dimensions for the inverse-gamma polymer in \cite{geor-rass-sepp-yilm-15}.

 The limiting quenched probability distributions on infinite-length polymer paths can be naturally described  as the Gibbs measures whose finite-dimensional conditional distributions are given by the quenched point-to-point polymer distributions $Q_{x,y}(\pi)=Z_{x,y}^{-1}  \prod_{x\tspa\in\tspa\pi} Y_x$.   Here $\pi$ is a path between points $x$ and $y$ and the partition function $Z_{x,y}=\sum_\pi \prod_{x\tspa\in\tspa\pi} Y_x$ normalizes $Q_{x,y}$ to be a probability distribution on the paths between $x$ and $y$. (This notion is developed precisely in Section \ref{sec:cif}.) 
 
 This Gibbsian point of view arose prominently  in the work of  Bakhtin and Li \cite{bakh-li-19} who studied   a 1+1 dimensional model with a Gaussian random walk.  They used polymer Gibbs measures  to construct global solutions to a stochastic Burgers equation on the line, subject to random kick forcing at discrete time intervals.  Their sequel  \cite{bakh-li-18}   showed that as the temperature is taken to zero, the Gibbs measures concentrate around the geodesic  of the corresponding directed  percolation model. 
 
   Janjigian and Rassoul-Agha \cite{janj-rass-20-aop}   developed aspects of a general theory of  polymer Gibbs measures  for i.i.d.\ vertex weights and directed  nearest-neighbor paths on the discrete planar square lattice $\Z^2$.  We work in their setting, with a specialized choice of weight distribution.

\subsection{Bi-infinite polymers} 

The work cited above  
 addressed  the existence and uniqueness of {\it semi-infinite}  Gibbs measures. 
These  are measures on semi-infinite, or one-sided infinite, paths, with fixed initial point.   The existence of {\it bi-infinite} Gibbs measures was left open.   These would be measures on bi-infinite paths  that satisfy the Gibbs property. 

Bi-infinite polymer Gibbs measures would be special cases of the general theory of Gibbs measures as developed in Georgii's monograph \cite{geor}. The polymer specification is a Markovian one because the distribution $Q_{x,y}$ on paths from $x$ to $y$ depends only on the boundary points $x$ and $y$.   However, this specification is not shift-invariant and hence the general theory of Chapters 10-11 of \cite{geor} is not helpful here.

In this paper we assume that the i.i.d.\ vertex weights $\{Y_x\}_{x\tsp\in\tsp\Z^2}$  on the planar lattice $\Z^2$  have inverse-gamma distribution. Then we prove that, for almost every choice of weights,  nontrivial bi-infinite Gibbs measures do not exist. Trivial  bi-infinite Gibbs measures do exist, by which we mean ones that are  supported on bi-infinite  straight lines.    

The key tools of the nonexistence proof are the following. 
\begin{enumerate}  [{(i)}] \itemsep=3pt
\item Planar comparison inequalities, reviewed and proved in Appendix \ref{app:genpol}. 
\item 
KPZ wandering exponent  $2/3$ of the polymer path, quoted  in Appendix \ref{sec:kpz5} from \cite{sepp-12-aop-corr}. 
\item A jointly stationary bivariate  inverse-gamma polymer from the  forthcoming work \cite{fan-sepp-20+} of the second author and W.~L.~Fan, developed in full detail in Appendix \ref{sec:stat-pol}. 
\end{enumerate} 
	From these  ingredients and coupling arguments we derive a bound on the speed of decay of the probability that a polymer path from far away in the  southwest to far away in the northeast goes through the origin. This bound is given in Theorem \ref{thm:ub} at the end of Section \ref{sec:estim}. The KPZ fluctuation bounds on polymer paths enable us to deduce this result   from local point-to-point estimates and a coarse-graining step. 
	
	Item (iii) above is the joint distribution of two {\it Busemann functions} of the polymer process.  We do not use the Busemann functions themselves in this paper and hence do not develop them.  We refer the reader to \cite{bakh-li-19,geor-rass-sepp-yilm-15, janj-rass-20-aop}. 
	
	 A methodological point to emphasize is that our proof does not rely on any integrable probability features of the inverse-gamma polymer, such as those developed in \cite{boro-corw-reme, corw-ocon-sepp-zygo}. The KPZ fluctuation estimates of Appendix \ref{sec:kpz5} were proved in  \cite{sepp-12-aop-corr} with techniques that are the same in spirit as the arguments in the present paper. 
	

It is reasonable to expect that non-existence of bi-infinite Gibbs measures  extends to general weight distributions, since the present proof  boils down to path fluctuations which are expected to be universal in 1+1 dimensions  under mild hypotheses. 	 However, currently available  techniques do not appear to  yield sufficiently sharp estimates to prove this result  in general polymer models.  Specifically,  items (ii) and (iii) from the list above    force us to work with an exactly solvable model.   

	 The zero-temperature counterpart of our result is the non-existence of bi-infinite geodesics in first-passage or last-passage percolation models.  This has been proved for the planar exponential directed  last-passage percolation model \cite{basu-hoff-sly-arxiv-18, bala-busa-sepp-arxiv}. The organization of our estimates mimics  our zero-temperature proof in \cite{bala-busa-sepp-arxiv}. 
	
	

\subsection{Organization of the paper}    Section \ref{sec:cif} develops enough of the general polymer  theory from \cite{janj-rass-20-aop} so that   in Section \ref{sec:inv-ga-thm} we can state   the main result Theorem \ref{thm:noex}  on the nonexistence of bi-infinite inverse-gamma polymer Gibbs measures.  Along the way we apply results from \cite{janj-rass-20-aop} to  prove for general weights that infinite polymers have to be directed into the open quadrant, unless they are rigid straight lines (Theorem \ref{thm:e_i-mu}). This result will also contribute to the proof of the  main Theorem \ref{thm:noex}. 

Section \ref{sec:invga} gives a quick description of the   ratio-stationary inverse-gamma polymer and derives one estimate. 

The heart of the proof is in Section \ref{sec:estim}.  A coarse-graining argument decomposes the southwest  boundary of a large $2N\times 2N$ square into blocks of size $N^{2/3}$.  Two separate  estimates are developed. 
\begin{enumerate}[(a)] 
\item  The first kind is for the probability that a polymer path from  an $N^{2/3}$-block denoted by $\cI$ goes  through the origin and reaches the diagonally opposite block  $\rim\cI$ of size $N^{19/24}$.  
 This probability is shown to decay by controlling it with random walks that come from the ratio-stationary polymer processes (Lemma \ref{lm:close}). 
\item   The second estimate  (Lemma \ref{lm:far}) controls the paths from $\cI$  through the origin that miss $\rim\cI$.  Such paths  are rare due to KPZ bounds according to which  the typical path remains  within a range of order $N^{2/3}$ around   the straight line between its endpoints.  
\end{enumerate} 
Section \ref{sec:estim}  culminates in Theorem \ref{thm:ub} that combines the estimates.   

Section \ref{sec:pf-main} combines  Theorem \ref{thm:ub} with the earlier Theorem \ref{thm:e_i-mu} to complete the proof of Theorem \ref{thm:noex}.   The estimates for paths that go through the origin are generalized to other crossing points on the $y$-axis by  suitably shifting the environment.

Since the background polymer material  will be at least partly  familiar to some readers, we have collected these facts in the appendix. Appendix \ref{app:genpol} covers   polymers on $\Z^2$  with general vertex  weights  and Appendix  \ref{app:inv-gam} specializes to inverse-gamma weights.  Appendix \ref{sec:rw} states a positive lower bound on the running maximum of a random walk with a small negative drift  that we use in a proof. This result is quoted from the technical note   \cite{busa-sepp-rw} that we have published separately. 
	
	
\subsection{Notation and conventions} Subsets of reals and integers are denoted by subscripts, as in 
	$\Z_{>0}=\{1,2,3,\dotsc\}$ and $\Z_{>0}^2=(\Z_{>0})^2$.  
	 $\lzb a,b\rzb$ denotes the integer interval $[a,b]\cap\Z$ if $a,b\in\R$, and the integer rectangle $([a_1,b_1]\times[a_2,b_2])\cap\Z^2$  if $a,b\in\R^2$.

	For points $x=(x_1,x_2)$ and $y=(y_1,y_2)$ in $\R^2$, the $\ell^1$ norm is $\abs{x}_1=\abs{x_1}+\abs{x_2}$,   the inner product is $x\cdot y=x_1y_1+x_2y_2$,    the origin is $\zevec=(0,0)$,  and the standard basis vectors are $\evec_1=(1,0)$ and $\evec_2=(0,1)$.  
We utilize two partial orders: \\[-15pt] 
\begin{enumerate} [(i)]  \itemsep=2pt 
\item the {\it coordinatewise order}:  $(x_1,x_2)\le(y_1,y_2)$  if $x_r\le y_r$ for $r\in\{1,2\}$, and 
\item the  {\it down-right order}:  $(x_1,x_2)\preccurlyeq(y_1,y_2)$  if $x_1\le y_1$ and $x_2\ge y_2$. 
\end{enumerate} 
 Their strict versions mean that the defining inequalities are strict:   $(x_1,x_2)<(y_1,y_2)$  if $x_r<y_r$ for $r\in\{1,2\}$, and   $(x_1,x_2)\prec(y_1,y_2)$  if $x_1< y_1$ and $x_2> y_2$.
	
			Sequences are denoted by $x_{m:n}=(x_i)_{i=m}^n$ and $x_{m:\infty}=(x_i)_{i=m}^\infty$ for integers $m\le n<\infty$ and also generically by $x_\bbullet$.   An admissible path $x_\bbullet$  in $\Z^2$ satisfies $x_k-x_{k-1}\in\{\evec_1,\evec_2\}$.  
	Limit velocities of these paths lie in the simplex 
	$[\evec_2,\evec_1]=\{(u,1-u): u\in[0,1]\}$, whose relative interior is  the open line segment  $\,]\evec_2,\evec_1[\,$.

	

	$\E$ and $\P$ refer to the random weights (the environment) $\w$, and otherwise  
	$E^\mu$ denotes expectation under probability measure $\mu$.
	The usual gamma function for $\rho>0$ is 
	$\Gamma(\rho)=\int_0^\infty x^{\rho-1}e^{-x}\,dx$, and  the digamma and   trigamma
	functions are $\psi_0=\Gamma'/\Gamma$ and $\psi_1=\psi_0'$.  
	$X\sim{\rm Ga}(\rho)$ if the random variable $X$ has the density function  $f(x)=\Gamma(\rho)^{-1} x^{\rho-1}e^{-x}\,dx$ on $\R_{>0}$,  and $X\sim{\rm Ga}^{-1}(\rho)$ if $X^{-1}\sim{\rm Ga}(\rho)$.  

%
%

	\section{Polymer Gibbs measures} 
	\label{sec:cif} 

%

	
\subsection{Directed polymers} 		

	Let $(\wgtd_x)_{x\tspa\in\tspa\Z^2}$ be an assignment  of strictly  positive real weights on the vertices of $\Z^2$. 
		For vertices $o\le p$ in $\Z^2$  let $\pathsp_{o,p}$ denote the set of admissible 
	lattice paths $x_\bbullet=(x_i)_{0\le i\le n}$ with $n=\abs{p-o}_1$ that satisfy $x_0=o$, 
	$x_i-x_{i-1}\in\{\evec_1,\evec_2\}$, $x_n=p$.  
	Define point-to-point
	polymer partition functions 
	between vertices  $o\le p$ in $\Z^2$  by
	\begin{align}\label{h:Z}
	Z_{o,p}=\sum_{x_\brbullet\tspa\in\tspa\pathsp_{o,p}} \prod_{i=0}^{\abs{p-o}_1} \wgtd_{x_i} .  
	\end{align}
	We use the convention $Z_{o,p}=0$ of $o\le p$ fails. 
		The   quenched polymer probability distribution  on the set  $\pathsp_{o,p}$ is defined  by 
	\be\label{h:Q}
	Q_{o,p}\{x_\bbullet\} =\frac1{Z_{o,p}}   \prod_{i=0}^{\abs{p-o}_1} \wgtd_{x_i} ,
	\quad x_\bbullet\in\pathsp_{o,p}. 
	\ee    
When the weights $\w=(Y_x)$ are random variables  on some probability space  $\OAbP$, the averaged or annealed polymer distribution   $P_{o,p}$ on $\pathsp_{o,p}$ is defined  by
	\be\label{h:P}
		P_{o,p}(A)=\int_\Omega \sum_{x_\brbullet\tspa\in\tspa A} Q^\w_{o,p}(x_\bbullet)\,\P(d\w) 
		\qquad\text{for } A\subset\pathsp_{o,p}. 
	\ee
The notation $Q^\w_{o,p}$ highlights the dependence of the quenched measure 	on the weights. 
	It is also convenient to use the unnormalized quenched polymer measure, which is simply the sum of path weights:  
\be\label{h:Z(A)}  Z_{o,p}(A)= \sum_{x_\brbullet\tspa\in\tspa A} \prod_{i=0}^{\abs{p-o}_1} \wgtd_{x_i}
= Z_{o,p} \tspb Q_{o,p}(A) \qquad\text{for } A\subset\pathsp_{o,p}.  \ee

A basic law of large numbers object of this model is the limiting {\it free energy density}.  Assume now the following:   
\be\label{mom-ass}  
\text{the weights $(Y_x)_{x\tsp\in\tsp\Z^2}$ are i.i.d.\ random variables and } \ 
\E[\tspa\abs{\log Y_0}^p\tspa]<\infty \ \text{ for some } \  p>2. 
\ee
 Then there exists   a concave, positively homogeneous, nonrandom  continuous function $\Lambda:\R_{\ge0}^2\to\R$  that satisfies this {\it shape theorem}: 
\be\label{lln}  
\lim_{n\to\infty}   \sup_{x\tsp\in\tsp\Z_{\ge0}^2: \tspb \abs{x}_1\ge n} 
\frac{\log Z_{\zevec,x}  -  \Lambda(x)}{\abs{x}_1} =0
\qquad \P\text{-almost surely.} 
\ee
(See Section 2.3 in \cite{janj-rass-20-aop}.)   In general, further regularity of $\Lambda$ is unknown.  In certain exactly solvable cases, including the inverse-gamma polymer we study in this paper,  the following properties are known:  
\be\label{reg-ass} 
\text{the function  $\Lambda$ 
 is differentiable and strictly concave on the open interval $\,]\evec_2,\evec_1[\,$.}
 \ee

	

Fix the base point $o=\zevec$ (the origin) and consider sending the endpoint $p$ to infinity in the quenched measure $Q_{\zevec,p}$.  Fix a finite path $x_{0:n}\in\pathsp_{\zevec,y}$ where  $\zevec\le y\le p$ and  $n=y\cdot(\evec_1+\evec_2)$.   To understand what happens  as $\abs p_1\to\infty$ it is convenient to write $Q_{\zevec,p}$ as a Markov chain: 
\be\label{h:Q5}
	Q_{\zevec,p}\{X_{0:n}=x_{0:n}\} =\frac1{Z_{\zevec,p}}  \biggl(\; \prod_{i=0}^{n-1} \wgtd_{x_i}\biggr) Z_{x_n,p} =   \prod_{i=0}^{n-1} \frac{Z_{x_{i+1},p}\wgtd_{x_i}}{Z_{x_i,p}}  
	\ee    
with initial state $X_0=\zevec$,   transition probability $\pi^{\zevec,p}(x,x+\evec_i)=Z_{x,p}^{-1}\tsp {Z_{x+\evec_i,p}\wgtd_{x}}$   for $p\ne x\in\lzb\zevec,p\rzb$,  and  absorbing state $p$.  The formulation above reveals that when the limit of the ratio $Z_{x+\evec_i,p}/Z_{x,p}$ exists for each fixed  $x$  as $p$ tends to infinity, then  $Q_{\zevec,p}$ converges weakly to a Markov chain.     When $p$ recedes in some particular direction,  this can be proved under local hypotheses on the regularity of $\Lambda$.  See Theorem 3.8 of  \cite{janj-rass-20-aop} for a general result and Theorem 7.1 in \cite{geor-rass-sepp-yilm-15}  for the inverse-gamma polymer.  

The limiting  Markov chains are examples of  rooted semi-infinite polymer Gibbs measures, which we discuss in the next section.

	\subsection{Infinite Gibbs measures}
	
	In this section we adopt mostly  the terminology and notation of  \cite{janj-rass-20-aop}.  
	To describe semi-infinite and bi-infinite polymer Gibbs measures, introduce the  spaces of semi-infinite and bi-infinite polymer paths in $\Z^2$: 
	\begin{align*}
	\pathsp_u&=\{x_{m:\infty}:x_m=u,\, x_i\in\Z^2,\, x_i-x_{i-1}\in\{\evec_1, \evec_2\}\}\\
\text{and}\qquad 	\pathsp&=\{x_{-\infty:\infty}: x_i\in\Z^2,\, x_i-x_{i-1}\in\{\evec_1, \evec_2\}\}.
	\end{align*}
$\pathsp_u$ is the space of paths rooted or based at the vertex $u\in\Z^2$.  The indexing of the paths is immaterial.  However, it adds clarity   to index unbounded paths  so that $x_k\cdot(\evec_1+\evec_2)=k$, as done in \cite{janj-rass-20-aop}.  We follow this convention in the present section.  So in the  definition of $\pathsp_u$ above take $m=u\cdot(\evec_1+\evec_2)$.   The projection random variables on all  the path  spaces are denoted by $X_i(x_{m:n}) =x_i$ for all choices $-\infty\le m\le n\le \infty$ and $i$ in the correct range.

	
	Fix $\omega\in\Omega$ and $m\in\Z$.   Define a family of stochastic  kernels $\{\kappa^\omega_{k,l}: l\ge k\ge m\}$ on semi-infinite paths $x_{m:\infty}$ through the integral of a bounded  Borel function $f$: 
	\be\label{kernel}\begin{aligned}
	\kappa^\omega_{k,l}f(x_{m:\infty})&=\int f(y_{m,\infty})\,\kappa^\omega_{k,l}(x_{m:\infty},dy_{m,\infty})\\
	&=\sum_{y_{k:l}\in\pathsp_{x_k, x_l}} f(x_{m:k}\,y_{k:l}\, x_{l:\infty})\,Q^\omega_{x_k,x_l}(y_{k:l}).
	\end{aligned}\ee
In other words, the action of $\kappa^\omega_{k,l}$ amounts to replacing the segment $x_{k:l}$ of the path with a new path $y_{k:l}$  sampled from  the  quenched polymer distribution $Q^\omega_{x_k,x_l}$. The argument $x_{m:k}\,y_{k:l}\, x_{l:\infty}$ inside $f$ is the concatenation of the three path segments. There is no inconsistency because $y_k=x_k$ and $y_l=x_l$ $Q^\omega_{x_k,x_l}$-almost surely.  The key point is that the measure $\kappa^\omega_{k,l}(x_{m:\infty})$ is a function of the subpaths $(x_{m:k},x_{l:\infty})$. 

Note that the same kernel $\kappa^\omega_{k,l}$ works on paths $x_{m:\infty}$ for any $m\le k$ and also  on the space $\pathsp$ of bi-infinite paths  by replacing $m$ with $-\infty$ in the expressions above.  With these kernels one defines semi-infinite and bi-infinite polymer Gibbs measures.  Let $\cF_I=\sigma\{X_i:i\in I\}$ denote the $\sigma$-algebra generated by the    projection variables indexed by the subset $I$ of indices. 
	
	\begin{definition}
		Fix $\omega\in\Omega$ and $u\in\Z^2$ and let $m=u\cdot(\evec_1+\evec_2)$.  Then  a Borel probability measure $\nu$ on $\pathsp_u$ is  a {\it semi-infinite polymer Gibbs measure rooted at $u$ in environment $\omega$}   if for all integers $l\geq k\geq m$ and any bounded Borel function $f$ on $\pathsp_u$ we have $E^\nu[ \tsp f \tspa\vert\tspa  \cF_{\lzb m,k\rzb\cup\lzb l,\infty\lzb}\tspa]=\kappa^\omega_{k,l}f$.  This set of probability measures is denoted by  $\text{DLR}^\omega_u$. 
	\end{definition}

	\begin{definition}
		Fix $\omega\in\Omega$. Then  a Borel probability measure $\mu$ on $\pathsp$ is   a {\it bi-infinite Gibbs measure  in environment $\omega$}   if for all integers $k\leq l$ and any bounded Borel function $f$ on $\pathsp$ we have $E^\mu[ \tspa f \tspa\vert\tspa  \cF_{\rzb -\infty,k\rzb\cup\lzb l,\infty\lzb}\tspa]=\kappa^\omega_{k,l}f$.   This set of probability measures is denoted by $\overleftrightarrow{\text{\rm DLR}}^\omega$.
	\end{definition}

An equivalent way to state $\mu\in\overleftrightarrow{\text{\rm DLR}}^\omega$ is to require 
\[   \int_{\pathsp} f(X_{-\infty:k}) \tspb g(X_{k:l}) \tspb h(X_{l:\infty}) \,d\mu =
 \int_{\pathsp} f(X_{-\infty:k}) \tspb (\kappa^\w_{k,l}g)(X_{-\infty:k}, X_{l:\infty}) \tspb h(X_{l:\infty}) \,d\mu \] 
 for all bounded Borel functions on the appropriate path spaces. For $\mu\in\text{DLR}^\omega_u$ the requirement is the same with $\pathsp$ replaced by $\pathsp_u$ and with  $-\infty$ replaced by $m$.

The   issue addressed in our paper is the nonexistence of nontrivial bi-infinite Gibbs measures.  For the sake of context, we state an existence theorem for semi-infinite Gibbs measures.  

	\begin{theorem}{\rm\cite[Theorem 3.2]{janj-rass-20-aop}}
	Assume \eqref{mom-ass} and \eqref{reg-ass}.   Then  there exists an event  $\Omega_0$ such that $\P(\Omega_0)=1$ and for every $\omega\in \Omega_0$ the following holds.   For each $u\in\Z^2$ and interior direction $\xi\in\,]\evec_2,\evec_1[\,$  there exists a Gibbs measure $\Pi^{\w, \xi}_u\in\text{\rm DLR}^\omega_u$ such that   $X_n/n\to\xi$ almost surely  under $\Pi^{\w, \xi}_u$.  Futhermore, these measures can be chosen to satisfy this consistency property:  if $u\cdot(\evec_1+\evec_2)\le  y\cdot(\evec_1+\evec_2)=n\le z\cdot(\evec_1+\evec_2)=r$, then for any path $x_{n:r}\in\pathsp_{y,z}$, 
	\[  \Pi^{\w, \xi}_u(X_{n:r}=x_{n:r}\,\vert\,X_n=y) = \Pi^{\w, \xi}_y(X_{n:r}=x_{n:r}).  \]  
	\end{theorem}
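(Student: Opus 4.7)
The plan is to construct $\Pi^{\w,\xi}_u$ as the law of a Markov chain whose transition kernel is built from Busemann functions, and then verify the DLR, directedness, and consistency properties from the structure of that kernel.

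First, I would construct Busemann functions. Under \eqref{mom-ass} together with the differentiability and strict concavity in \eqref{reg-ass}, the shape theorem \eqref{lln} suggests that for each fixed $\xi\in\,]\evec_2,\evec_1[\,$ and each pair $x\le y$ in $\Z^2$ the limit
\[
B^{\w,\xi}(x,y)=\lim_{\substack{z\in\Z_{\ge0}^2\\ z/\abs{z}_1\to\xi}}\log\frac{Z_{x,z}^\w}{Z_{y,z}^\w}
\]
exists on a full-probability event. A standard route uses monotonicity / concavity together with the gradient of $\Lambda$ at $\xi$ to control ratios $Z_{x+\evec_i,z}/Z_{x,z}$ along a countable dense set of directions $\xi$, and then extends by monotonicity of the ratios in the direction parameter to every interior $\xi$. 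This yields a single event $\Omega_0$ of full measure on which $B^{\w,\xi}$ is well defined, additive ($B^{\w,\xi}(x,y)+B^{\w,\xi}(y,z)=B^{\w,\xi}(x,z)$), and satisfies the recovery identity $Y_x=e^{B^{\w,\xi}(x,x+\evec_1)}+e^{B^{\w,\xi}(x,x+\evec_2)}$, obtained by passing to the limit in $Z_{x,z}=Y_x(Z_{x+\evec_1,z}+Z_{x+\evec_2,z})/Y_x$.

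Next I would use the Busemann function to define transition probabilities
\[
\pi^{\w,\xi}(x,x+\evec_i)=\frac{Y_{x+\evec_i}}{\exp B^{\w,\xi}(x,x+\evec_i)}\cdot e^{-B^{\w,\xi}(x,x+\evec_i)}\cdot Y_x^{-1}\cdot\bigl(\text{normalizing}\bigr),
\]
more cleanly written as $\pi^{\w,\xi}(x,x+\evec_i)=Y_{x+\evec_i}\exp\{-B^{\w,\xi}(x,x+\evec_i)\}$, which sums to one by the recovery identity. Then $\Pi^{\w,\xi}_u$ is defined as the distribution on $\pathsp_u$ of the Markov chain with these transitions started at $u$. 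The DLR property follows because for any $k\le l$ and any admissible $x_{k:l}$ with $x_k=a$, $x_l=b$, the product of transition probabilities telescopes via the cocycle property:
\[
\prod_{i=k}^{l-1}\pi^{\w,\xi}(x_i,x_{i+1})=\Bigl(\prod_{i=k+1}^{l} Y_{x_i}\Bigr)\,e^{-B^{\w,\xi}(a,b)}.
\]
Dividing this expression by its sum over $y_{k:l}\in\pathsp_{a,b}$ cancels the Busemann factor and produces exactly $Q^\w_{a,b}(y_{k:l})=Z^\w_{a,b}{}^{-1}\prod_{i=k}^{l}Y_{y_i}\cdot Y_a^{-1}$; this is the DLR identity against $\kappa^\w_{k,l}$.

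For the directedness $X_n/n\to\xi$ under $\Pi^{\w,\xi}_u$, I would argue from strict concavity and differentiability of $\Lambda$ at $\xi$. The quenched mean drift $E^{\Pi^{\w,\xi}_u}[X_{n+1}-X_n\mid X_n=x]$ is an explicit function of $(B^{\w,\xi}(x,x+\evec_i))_{i=1,2}$, and a shift-covariance/ergodic argument (applied to the stationary recovered cocycle under the appropriate Palm-type measure, as in \cite{janj-rass-20-aop}) identifies its mean with $\xi$; strict concavity at $\xi$ rules out other limit directions. The consistency property is then immediate from the Markov structure: conditioning on $X_n=y$ produces a Markov chain started at $y$ with the same transition kernel $\pi^{\w,\xi}$, which by definition is $\Pi^{\w,\xi}_y$.

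The main obstacle, in my view, is the simultaneous construction of the Busemann family $(B^{\w,\xi})_{\xi}$ on a single full-measure event $\Omega_0$ that works for every $u$ and every interior $\xi$, and the identification of the drift of the resulting Markov chain with $\xi$. Once those analytic inputs are in hand, the DLR identity and the consistency are algebraic consequences of the cocycle property of $B^{\w,\xi}$ and the definitions of $Q^\w_{x,y}$ and $\kappa^\w_{k,l}$.
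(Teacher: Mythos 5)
This statement is quoted from \cite[Theorem 3.2]{janj-rass-20-aop}; the present paper does not prove it, so there is no ``paper's own proof'' to compare against. Your outline does match the Busemann-function approach that Janjigian and Rassoul-Agha actually use: build a directionally indexed cocycle $B^{\w,\xi}$ as a limit of log-ratios of partition functions (via monotonicity of the ratios in the direction parameter, a countable dense set of directions, and the regularity of $\Lambda$ at $\xi$ to pin the limit down), turn it into a Markov kernel, and read off DLR, directedness, and consistency from the cocycle and Markov structure. You also correctly single out the global construction of $B^{\w,\xi}$ on a single event $\Omega_0$ as the heavy lifting; that is indeed where the work is in \cite{janj-rass-20-aop}.

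Two of your displayed formulas are off, though the surrounding logic survives once they are corrected. Passing to the limit in $Z_{x,z}=Y_x\bigl(Z_{x+\evec_1,z}+Z_{x+\evec_2,z}\bigr)$, i.e.\ dividing by $Z_{x,z}$ and sending $z\to\infty$ in direction $\xi$, gives the recovery identity
\[
Y_x^{-1}=e^{-B^{\w,\xi}(x,x+\evec_1)}+e^{-B^{\w,\xi}(x,x+\evec_2)},
\]
not $Y_x=e^{B(x,x+\evec_1)}+e^{B(x,x+\evec_2)}$. Consequently the transition kernel obtained as the limit of $\pi^{\zevec,p}(x,x+\evec_i)=Z_{x+\evec_i,p}Y_x/Z_{x,p}$ from \eqref{h:Q5} is
\[
\pi^{\w,\xi}(x,x+\evec_i)=Y_x\,e^{-B^{\w,\xi}(x,x+\evec_i)},
\]
with $Y_x$ rather than the $Y_{x+\evec_i}$ you wrote; your formula does not sum to one. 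With the corrected kernel, the telescoping you invoke gives $\prod_{i=k}^{l-1}\pi^{\w,\xi}(x_i,x_{i+1})=\bigl(\prod_{i=k}^{l-1}Y_{x_i}\bigr)e^{-B^{\w,\xi}(a,b)}$ (indices $k$ to $l-1$, not $k+1$ to $l$), and normalizing over $\pathsp_{a,b}$ still cancels the Busemann factor and the endpoint weights to reproduce $Q^\w_{a,b}$, so the DLR verification and the consistency-from-Markov-structure argument go through as you describe.
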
 

Uniqueness of Gibbs measures is a more subtle topic, and we refer the reader to \cite{janj-rass-20-aop}.  
Since the Gibbs measure $\Pi^{\w, \xi}_u$ satisfies the strong law of large numbers $X_n/n\to\xi$, we can call it (strongly) {\it $\xi$-directed}.  In general, a path $x_{m:\infty}$ is $\xi$-directed if $x_n/n\to\xi$ as $n\to\infty$.

	We turn to 
  bi-infinite Gibbs measures.  First we observe that there are  trivial bi-infinite Gibbs measures supported on straight line paths. 

	 \begin{definition}
	 	A    path $x_\brbullet$
		 is a {\it straight line}   if for a fixed  $i\in\{1,2\}$,  $x_{n+1}-x_n=\evec_i$ for all path indices $n$. 
	 \end{definition}

 	If $x_{\brbullet}$ is a bi-infinite straight line    then  $\mu=\delta_{x_{\brbullet}}$ is a bi-infinite Gibbs measure because the polymer distribution $Q_{u, u+m\evec_i}$ is supported on the straight line from $u$ to $u+m\evec_i$.  More generally,  any probability measure supported on bi-infinite  straight lines  is a bi-infinite Gibbs measure.  
	
	The next natural question is whether there can be  bi-infinite polymer paths that are not merely straight lines but still  directed into $\evec_i$.  That this  option can be ruled out is essentially contained in the results of  \cite{janj-rass-20-aop}.  We make this explicit   in the next theorem.  It  says that under both semi-infinite and bi-infinite Gibbs measures,  up to a zero probability event, $\evec_i$-directedness even along a subsequence   is possible  only for straight line paths.    Note that \eqref{dlr:70} covers both $\evec_i$- and  $(-\evec_i)$-directedness.

\begin{theorem} \label{thm:e_i-mu}  Assume \eqref{mom-ass}.  There exists an event $\Omega_0\subseteq \Omega$ such that $\P(\Omega_0)=1$ and for every $\omega\in \Omega$ the following statements hold  for both $i\in\{1,2\}$:  \smallskip 

{\rm (a)}  For all $u\in\Z^2$ and  $\nu\in{\text{\rm DLR}}^\omega_u$, with  $m=u\cdot(\evec_1+\evec_2)$,    
\be\label{dlr:65}  \nu\bigl\{ \tsp\varliminf_{n\to\infty} n^{-1}\abs{X_n\cdot\evec_{3-i}}  =  0 \bigr\}   = \nu\{  X_n=u+(n-m)\evec_i \text{ for }  n\ge m \}.  
\ee

{\rm (b)} 
For all  $\mu\in\overleftrightarrow{\text{\rm DLR}}^\omega$, 
\be\label{dlr:70}  \mu\bigl\{ \tsp\varliminf_{\abs n\to\infty} \abs{n^{-1} X_n\cdot\evec_{3-i}}  =  0 \bigr\}   = \mu\{ \text{$X_{-\infty:\infty}$ is an $\evec_i$-directed bi-infinite straight line} \}.  
\ee
\end{theorem}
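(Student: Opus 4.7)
By a coordinate swap it suffices to treat part (a) for $i=1$. Write $A := \{\varliminf_{n\to\infty} n^{-1} X_n \cdot \evec_2 = 0\}$, $B := \{X_n = u + (n-m)\evec_1 \text{ for all } n \ge m\}$, and $b_n := X_n \cdot \evec_2 - u \cdot \evec_2$. The inclusion $B \subseteq A$ is clear, so the content is the reverse inclusion up to a $\nu$-null set. The plan starts with a reduction: $A$ is a tail event, measurable with respect to $\cF_{\lzb l,\infty\lzb}$ for every $l$, so the Gibbs kernel $\kappa^\omega_{k,l}$ (which only rewrites the middle segment $X_{\lzb k,l\rzb}$) commutes with conditioning on $A$. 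Consequently, whenever $\nu(A)>0$, $\tilde\nu := \nu(\cdot \mid A) \in \text{DLR}^\omega_u$, and it suffices to show that any $\tilde\nu \in \text{DLR}^\omega_u$ concentrated on $A$ must be supported on $B$.

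Decompose $A\setminus B = C_1 \sqcup C_2$, where $C_1 = \{\lim_n b_n \in \{1,2,\ldots\}\}$ (finitely many but at least one $\evec_2$-step) and $C_2 = \{b_n \to \infty,\ \varliminf_n n^{-1} b_n = 0\}$. For $C_1$, write it as a countable union $\bigcup_\sigma E_\sigma$ indexed by nontrivial finite initial segments $\sigma$ from $u$ to some endpoint $y$ with $y\cdot\evec_2 > u\cdot\evec_2$, where $E_\sigma$ is the event ``initial segment equals $\sigma$, then straight $\evec_1$ forever''. By the Gibbs property, for every $l>|y-u|_1$,
\[ \tilde\nu(E_\sigma) \le Q^\omega_{u,\, y + (l - |y-u|_1)\evec_1}\bigl(\pi : \pi \text{ agrees with } \sigma \text{ then goes straight } \evec_1\bigr). \]
As $l\to\infty$, this point-to-point polymer probability decays to $0$: each $\evec_2$-step of $\sigma$ can be relocated to any of $\sim l$ positions along the long $\evec_1$-segment producing a competitor path, and by the i.i.d.\ structure of the weights the shifted paths have comparable polymer weight (a CLT-type comparison of log-weights against $O(\sqrt{\log l})$ fluctuations). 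Summing over the countable family yields $\tilde\nu(C_1)=0$.

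For $C_2$, I would invoke from \cite{janj-rass-20-aop} an extremal decomposition of $\text{DLR}^\omega_u$ as a mixture of $\xi$-directed Gibbs measures $\Pi^{\omega,\xi}_u$ for $\xi\in[\evec_2,\evec_1]$, together with the fact that the unique Gibbs measure directed at $\evec_1$ rooted at $u$ is the delta-measure on the pure straight line (since each finite $Q^\omega_{u, u+l\evec_1}$ is already a delta). For $\xi$ in the open interior, $b_n/n \to \xi\cdot\evec_2 > 0$ $\Pi^{\omega,\xi}_u$-almost surely, so such components give zero mass to $A$; the $\evec_2$ boundary component has $b_n=n-m$, also giving zero mass to $A$. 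Thus the mixing measure must concentrate on $\xi=\evec_1$, forcing $\tilde\nu$ to be the straight-line delta, which contradicts $b_n\to\infty$. Hence $\tilde\nu(C_2)=0$, completing part (a).

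For part (b), apply (a) to each half of the bi-infinite path. For any $k\in\Z$, by the bi-infinite Gibbs property the conditional distribution of $X_{\lzb k,\infty\lzb}$ given $X_{\rzb-\infty,k\rzb}$ is a semi-infinite Gibbs measure rooted at $X_k$; part (a) then forces the forward half on the event in \eqref{dlr:70} to be a straight $\evec_i$ line from $X_k$, and the backward half is handled symmetrically by a reverse Gibbs property. Concatenation produces the full bi-infinite $\evec_i$-straight line. I expect the main obstacle to be case $C_2$ in part (a): ruling out paths with $b_n\to\infty$ yet subsequentially vanishing $b_n/n$ relies on the extremal decomposition of $\text{DLR}^\omega_u$ into directed components, which is the substantive input one must extract from \cite{janj-rass-20-aop}. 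A direct alternative would require a quantitative shape-theorem and fluctuation bound to exclude such erratic paths under any Gibbs measure.
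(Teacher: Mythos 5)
Your overall strategy matches the paper's: for (a), pass to $\tilde\nu=\nu(\,\cdot\mid A)\in\text{DLR}^\omega_u$ via the tail-event property of $A$, decompose $\tilde\nu$ into extreme members of $\text{DLR}^\omega_u$ (Choquet, using compactness of $\pathsp_u$), and invoke the characterization from \cite{janj-rass-20-aop} that identifies the relevant extremes as degenerate straight-line deltas; for (b), condition on a level, apply (a), let the level recede, and reflect for $n\to-\infty$. But the execution diverges from the paper in ways worth flagging. The $C_1$/$C_2$ split is unnecessary: the actual input used by the paper (Lemma 3.4 and Theorem 3.5 of \cite{janj-rass-20-aop}) is that an extreme member of $\text{DLR}^\omega_u$ which is \emph{not} directed into the open interval $\,]\evec_2,\evec_1[\,$ must already be the degenerate $\evec_i$-line delta, which simultaneously rules out $C_1$ and $C_2$ because directed-interior extremes have $X_n\cdot\evec_{3-i}/n\to\xi_{3-i}>0$. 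Moreover, your sketch for $C_1$ via a ``CLT-type comparison against $O(\sqrt{\log l})$ fluctuations'' is not the right mechanism and would need reworking (the cleaner observation is that the $\sim l$ ratios of weights of shifted competitor paths to the fixed path $\pi_\sigma$ are i.i.d.\ positive variables whose partial sums diverge a.s., so $Q^\omega_{u,v_l}(\pi_\sigma)\to0$), but this is moot given the prior point. Finally, in the $C_2$ step the assertion ``the unique Gibbs measure directed at $\evec_1$ rooted at $u$ is the delta-measure on the pure straight line'' is essentially the claim being proved rather than an independent input; the non-circular input is exactly the extremality characterization above. For (b) the paper avoids a ``reverse Gibbs property'' and instead reflects the environment $Y_x\mapsto Y_{-x}$ through the origin, converting the $n\to-\infty$ case to the already-proved $n\to+\infty$ case.
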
 

\begin{proof} 
Let the event $\Omega_0$ of full $\P$-probability be  the intersection of the events specified in Lemma 3.4 and Theorem 3.5 of   \cite{janj-rass-20-aop}.  

\medskip 

 Part (a).   We can assume that 
  the left-hand side of \eqref{dlr:65}  is positive because the event on the right is a subset of the one on the  left.   Since $A=\{ \varliminf_{n\to\infty} n^{-1}\abs{X_n\cdot\evec_{3-i}}  =  0\}$ is a tail event, it follows   that $\wt\nu=\nu(\cdot\tspb\vert\,A)\in \text{DLR}^{\omega}_u$.   Since $\pathsp_u$ is compact,   $\wt\nu$ is a mixture of extreme members of $\text{DLR}^{\omega}_u$. (This is an application of Choquet's theorem, discussed more thoroughly in Section 2.4  of   \cite{janj-rass-20-aop}.)   This mixture can be restricted to  extreme Gibbs measures that  give the event  $A$ full probability.
  
    By Lemma 3.4 and  Theorem 3.5 of   \cite{janj-rass-20-aop},  an extreme member of  $\text{DLR}^{\omega}_u$ that is not directed into the open interval $]\evec_2,\evec_1[$ must be a  degenerate point measure 
    $\Pi^{\evec_i}_u$, which  is the probability measure supported on the single straight line path $(u+(n-m)\evec_i)_{n:n\ge m}$.  We conclude that $\wt\nu=\Pi^{\evec_i}_u$.

 From this we deduce \eqref{dlr:65}.  Let $B^{\evec_i}_u=\{ X_n=u+(n-m)\evec_i \text{ for }  n\ge m\} $ be the event that from $u$ onwards the path is an $\evec_i$-directed line. Then by conditioning,  
 \begin{align*}
 \nu(B^{\evec_i}_u) = \nu(B^{\evec_i}_u\cap A) = \wt\nu(B^{\evec_i}_u) \tspa\nu(A) =\nu(A).    
 \end{align*}

\medskip 

Part (b).  Consider first the case $n\to\infty$.   Let $m\in\Z$ and $x\cdot(\evec_1+\evec_2)=m$.  
Suppose  $\mu(X_m=x)>0$.  Then, by Lemma 2.4 in \cite{janj-rass-20-aop}, 
  $\mu_x=\mu(\cdot\tspb\vert\,X_m=x) \in \text{DLR}^{\omega}_x$.
Part (a) applied to $\mu_x$ shows that 
\be\label{dlr:72}  \mu\{ X_m=x,\,  \,\varliminf_{n\to\infty} n^{-1}\abs{X_n\cdot\evec_{3-i}}  =  0 \}   = \mu\{  X_n=x+(n-m)\evec_i \text{ for }  n\ge m\}.   
\ee
By summing over the pairwise disjoint events $\{X_m=x\}$ gives, for each fixed $m\in\Z$,  
\[ \mu\{  \tsp\varliminf_{n\to\infty} n^{-1}\abs{X_n\cdot\evec_{3-i}}  =  0 \tsp\}   = \mu\{  X_n=X_m+(n-m)\evec_i \text{ for }  n\ge m\}.  \] 
The events on the right decrease as $m\to-\infty$, and in the limit we get 
\[ \mu\{ \tsp\varliminf_{n\to\infty} n^{-1}\abs{X_n\cdot\evec_{3-i}}  =  0\tsp \}   = \mu\{  X_n=X_m+(n-m)\evec_i \text{ for all }  n, m\in\Z\}  \] 
which is exactly the claim  \eqref{dlr:70} the case $n\to\infty$. 

The case $n\to-\infty$ of  \eqref{dlr:70} follows by reflection across the origin.  Let $\w=(Y_x)_{x\in\Z^2}$ and define reflected weights $\wt\w=(\wt Y_x)_{x\in\Z^2}$ by $\wt Y_x=Y_{-x}$.   Given  $\mu\in\overleftrightarrow{\text{\rm DLR}}^\omega$,  define  the reflected measure $\wt\mu$ by setting,  for $m\le n$ and $x_{m:n}\in\pathsp_{x_m,x_n}$,    
$\wt\mu(X_{m:n}=x_{m:n}) =\mu(X_i=-x_{-i}\ \text{ for } i=-n,\dotsc,-m)$. 
Then $\wt\mu\in\overleftrightarrow{\text{\rm DLR}}^{\wt\omega}$.   
Directedness towards $-\evec_i$ under $\mu$ is now directedness towards $\evec_i$ under $\wt\mu$, and we get the conclusion by applying the already proved part to $\wt\mu$. 
%
%
%
%
%
%
\end{proof}

Moving  away from the $\evec_i$-directed cases,    the non-existence problem was resolved by Janjigian and Rassoul-Agha in the case of Gibbs measures directed towards a fixed   interior  direction: 

	\begin{theorem} {\rm\cite[Thm.~3.13]{janj-rass-20-aop}}   \label{thm:jra6} 
	Assume \eqref{mom-ass} and \eqref{reg-ass}.   Fix $\xi\in\,]\evec_2,\evec_1[\,$.  Then  there exists an event  $\Omega_{\text{\rm bi},\xi}\subseteq\Omega$ such that $\P(\Omega_{\text{\rm bi},\xi})=1$ and for every $\omega\in \Omega_{\text{\rm bi},\xi}$ there exists no measure $\mu\in\overleftrightarrow{\text{\rm DLR}}^\omega$ such that as $n\to\infty$, $X_n/n\to\xi$ in probability under $\mu$.
	\end{theorem}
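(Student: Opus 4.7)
The plan is to argue by contradiction: assume $\mu\in\overleftrightarrow{\text{\rm DLR}}^\omega$ exists with $X_n/n\to\xi$ in $\mu$-probability, and show that the marginal of $X_0$ under $\mu$ assigns zero mass to every vertex, which is incompatible with $\mu$ being a probability measure.

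The first step is to pin down the conditional forward distribution. For any $x\in\Z^2$ with $\mu(X_0=x)>0$, by Lemma 2.4 of \cite{janj-rass-20-aop} the forward conditional $\mu(X_{0:\infty}\in\cdot\tsp|\tsp X_0=x)$ is a semi-infinite Gibbs measure rooted at $x$. It inherits the $\xi$-directedness from $\mu$, since conditioning on a positive-probability event preserves convergence in probability. By the uniqueness of the $\xi$-directed semi-infinite Gibbs measure under \eqref{reg-ass} (proved in \cite{janj-rass-20-aop}), this conditional equals $\Pi^{\omega,\xi}_x$. In particular, $X_n$ fluctuates on KPZ scale $n^{2/3}$ around the ray $x+n\xi$.

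Next, I would use the bi-infinite Gibbs property to bound $\mu(X_0=\zevec)$ via endpoint conditioning. For any $N\geq 1$ and any $u\le\zevec\le v$ with $u\cdot(\evec_1+\evec_2)=-N$ and $v\cdot(\evec_1+\evec_2)=N$,
\[
\mu\bigl(X_0=\zevec\tsp|\tsp X_{-N}=u,\,X_N=v\bigr)=\frac{Z^\omega_{u,\zevec}\tspa Z^\omega_{\zevec,v}}{Z^\omega_{u,v}}.
\]
Taking $\mu$-expectation and using the concentration of $X_N$ in a KPZ-scale window around $N\xi$, one sees that $\mu(X_0=\zevec)$ is bounded, up to a vanishing error, by the supremum of this crossing ratio as $(u,v)$ ranges over a KPZ-scale window around $(-N\xi,N\xi)$. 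The backward endpoint $X_{-N}$ is automatically forced to lie near $-N\xi$: otherwise the polymer from $X_{-N}$ to $X_N$ would concentrate along a line segment far from the origin and the crossing ratio would already be negligible.

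The main obstacle is proving the required decay of the crossing ratio through the origin, uniformly over endpoints in a KPZ-scale window near $(-N\xi,N\xi)$. This is essentially the content of Theorem \ref{thm:ub} in Section \ref{sec:estim}, whose proof combines the ratio-stationary polymer with characteristic direction $\xi$, planar comparison inequalities, and the KPZ wandering exponent $2/3$. Once this decay is in hand, letting $N\to\infty$ yields $\mu(X_0=\zevec)=0$, and translating the environment gives $\mu(X_0=x)=0$ for every $x$ on the anti-diagonal through the origin, contradicting that $\mu$ is a probability measure. Note that fixing $\xi$ makes this case more tractable than the main Theorem \ref{thm:noex}, since one can align the stationary polymer's characteristic direction with $\xi$, whereas the main theorem must handle all interior directions at once.
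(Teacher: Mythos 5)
This theorem is quoted verbatim from Janjigian--Rassoul-Agha and is not proved in the present paper, so there is no in-paper proof to compare against. More importantly, your proposed argument cannot establish the theorem in its stated generality: Theorem \ref{thm:jra6} assumes only \eqref{mom-ass} and \eqref{reg-ass} (general i.i.d.\ weights with some moments and a differentiable, strictly concave shape function), whereas every quantitative ingredient you invoke --- KPZ-scale fluctuations of $X_N$ around $N\xi$, the decay bound of Theorem \ref{thm:ub}, and the ratio-stationary polymer with tunable parameter $\alpha$ --- is specific to the inverse-gamma model. The paper itself makes this point explicitly: items (ii) and (iii) in its list of tools ``force us to work with an exactly solvable model,'' which is precisely why the authors state their non-existence result only for inverse-gamma weights even though they conjecture it should hold generally. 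So your proof, if filled in, would prove a weaker statement than what is asserted (inverse-gamma weights only), and even then it would largely be a restatement of the paper's own proof of the stronger Theorem \ref{thm:noex}, where the fixed direction $\xi$ turns out to be unnecessary because Theorem \ref{thm:ub} already bounds the crossing ratio uniformly over all pairs $(u,v)\in\partial^N\times\rim{\partial}^N$.

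The soft skeleton of your argument --- condition on endpoints to get the Gibbs crossing ratio $Z^\omega_{u,\zevec}Z^\omega_{\zevec,v}/Z^\omega_{u,v}$, show it vanishes, translate to kill all atoms of $X_0$ --- is sound and is essentially the strategy of Section \ref{sec:pf-main}. But to recover Theorem 3.13 of \cite{janj-rass-20-aop} at the stated generality, one must replace the quantitative path-fluctuation estimates with a softer argument. The original proof in \cite{janj-rass-20-aop} works with Busemann functions and a shift-covariance/ergodicity argument that does not require any KPZ exponent or exactly solvable structure; that is the missing idea here. A secondary gap is your appeal to uniqueness of the $\xi$-directed semi-infinite Gibbs measure to identify $\mu(\cdot\,|\,X_0=x)$ with $\Pi^{\omega,\xi}_x$: under \eqref{reg-ass} alone, uniqueness for a fixed direction holds only almost surely, and one must manage the null sets carefully if this identification is to be used; the present paper circumvents this entirely by not conditioning on $X_0$ at all.
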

	
  We assumed \eqref{reg-ass} above to avoid introducing technicalities not needed in the rest of the paper.  	
	The global regularity assumption \eqref{reg-ass} can be weakened to local hypotheses, as done in 
 Theorem 3.13 in \cite{janj-rass-20-aop}.     
 
The results above illustrate how far one can presently go without stronger assumptions on the model.    The hard question  left open   is whether    bi-infinite  Gibbs measures can exist in random directions in the open interval $\,]\evec_2, \evec_1[\,$.  To rule these out we restrict our treatment  to   the exactly solvable case of  inverse-gamma distributed weights.  

  That only directed Gibbs measures would need to be considered in the sequel  is a consequence of Corollary 3.6 of  \cite{janj-rass-20-aop}.  However, we do not need to assume this directedness a priori and we do not use Theorem \ref{thm:jra6}.  At the end we will appeal to Theorem \ref{thm:e_i-mu} to rule out the extreme slopes.    As stated above,  Theorem \ref{thm:e_i-mu} does not seem to involve   the regularity of $\Lambda$.  But  in fact through appeal to Theorem 3.5 of  \cite{janj-rass-20-aop},  it does rely on the  nontrivial (but provable)  feature   that $\Lambda$  is not affine on any interval of the type $\,]\zeta,\evec_1]$ (and symmetrically on $[\evec_2, \eta[\,$).    This is the positive temperature counterpart of Martin's shape asymptotic on the boundary \cite{mart-04} and can be deduced from that (Lemma B.1 in \cite{janj-rass-20-aop}).


 \subsection{Bi-infinite Gibbs measures in the inverse-gamma polymer}
 \label{sec:inv-ga-thm}
 
A random variable $X$ has the {\it inverse gamma distribution} with parameter $\theta>0$, abbreviated $X\sim\text{\rm Ga}^{-1}(\theta)$, if its reciprocal $X^{-1}$ has the standard  gamma distribution with parameter $\theta$, abbreviated $X^{-1}\sim\text{\rm Ga}(\theta)$.    Their density functions for $x>0$ are 
\be\label{invga} \begin{aligned} 
 f_{X^{-1}}(x)&= \frac1{\Gamma(\theta)} \tsp x^{\theta-1} e^{-x} 
 \qquad\text{for the gamma distribution ${\rm Ga}(\theta)$} \\
\text{ and }\quad 
 f_X(x)&=\frac1{\Gamma(\theta)} \tsp x^{-1-\theta} e^{-x^{-1}} 
  \qquad\text{for the inverse gamma distribution ${\rm Ga}^{-1}(\theta)$.}
\end{aligned} \ee
Here $\Gamma(\theta)=\int_0^\infty s^{\theta-1} e^{-s}\,ds$ is the gamma function. 

 Our basic assumption is: 
 \be\label{ass-invga}  \begin{aligned} 
&\text{The weights $(Y_x)_{x\tspa\in\tspa\Z^2}$ are i.i.d.\ inverse-gamma distributed random variables} \\
&\text{on some probability space $\OAbP$.} 
\end{aligned} \ee
 
 The main result is stated as follows. 
 
	\begin{theorem}\label{thm:noex}
		Assume \eqref{ass-invga}. Then for $\P$-almost every $\w$, every bi-infinite Gibbs measure  
		is supported on straight lines: that is, $\mu\in\overleftrightarrow{\text{\rm DLR}}^\omega$ implies that $\mu(\text{$X_{-\infty:\infty}$ is a bi-infinite  straight line})=1$. 
	\end{theorem}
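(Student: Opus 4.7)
The plan is to derive Theorem \ref{thm:noex} by combining the point-to-point decay estimate of Theorem \ref{thm:ub} with the directedness dichotomy of Theorem \ref{thm:e_i-mu}, through a Gibbs disintegration and a countable family of environment shifts. Choose a full-$\P$-measure event $\Omega_0$ on which the conclusions of Theorem \ref{thm:e_i-mu} hold and on which Theorem \ref{thm:ub} applies to the environment translated by every $p\in\Z^2$; this is possible because $\Z^2$ is countable and the inverse-gamma law is translation invariant. Fix $\omega\in\Omega_0$ and $\mu\in\overleftrightarrow{\text{DLR}}^\omega$. It suffices to prove $\mu(A)=0$, where $A=\{X_{-\infty:\infty}\text{ is not a bi-infinite straight line}\}$.

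By Theorem \ref{thm:e_i-mu}(b), for each $i\in\{1,2\}$ the event $\{\varliminf_{|n|\to\infty}|n^{-1}X_n\cdot\evec_{3-i}|=0\}$ equals $\mu$-almost surely the event that the path is an $\evec_i$-directed straight line. Intersecting the complements over $i=1,2$ shows that on $A$ one has $\varliminf_{|n|\to\infty}|n^{-1}X_n\cdot\evec_j|>0$ for both $j\in\{1,2\}$. Because admissible paths are NE-monotone, this forces the path on $A$ to travel far NE into the open quadrant as $n\to+\infty$ and far SW as $n\to-\infty$. Consequently, with $K_\epsilon=\{\xi\in[\evec_2,\evec_1]:\xi\cdot\evec_j\in[\epsilon,1-\epsilon],\,j=1,2\}$, the sub-event
\[ A_\epsilon=A\cap\{\exists\, n_0:\ X_n/n\in K_\epsilon\ \forall n\ge n_0\ \text{ and }\ -X_{-n}/n\in K_\epsilon\ \forall n\ge n_0\} \]
satisfies $\mu(A_\epsilon)\uparrow\mu(A)$ as $\epsilon\downarrow 0$.

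Fix $p\in\Z^2$ and let $B_{p,\epsilon}=A_\epsilon\cap\{X_0=p\}$. For large $N$ and admissible pairs $(u,v)$ with $u\le p\le v$, the bi-infinite Gibbs property applied to the kernel $\kappa^\omega_{-N,N}$ gives
\[ \mu(B_{p,\epsilon}\cap\{X_{-N}=u,\,X_N=v\})=Q^\omega_{u,v}(X_0=p)\cdot\mu(A_\epsilon\cap\{X_{-N}=u,\,X_N=v\}). \]
On $A_\epsilon$ the dominant pairs $(u,v)$ satisfy $|p-u|_1\asymp|v-p|_1\asymp N$ with the coordinate ratios of both $p-u$ and $v-p$ inside $[\epsilon,1-\epsilon]$. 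Translating the environment by $p$ so that $p$ becomes the origin and invoking Theorem \ref{thm:ub} in the translated environment bounds $Q^\omega_{u,v}(X_0=p)$ by $\eta(N,\epsilon)$, a quantity tending to $0$ as $N\to\infty$ for fixed $\epsilon$, uniformly over the allowed $(u,v)$. Summing over $(u,v)$ yields $\mu(B_{p,\epsilon})\to 0$ as $N\to\infty$, hence $\mu(B_{p,\epsilon})=0$. Sending $\epsilon\downarrow 0$ and then summing over the countable set $p\in\Z^2$ (which covers $A$ because $X_0$ must take some value in $\Z^2$) gives $\mu(A)=0$.

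The main obstacle is the uniform quantitative step: Theorem \ref{thm:ub} is formulated for boundary-block pairs of a $2N\times 2N$ square under a coarse-grained decomposition at scale $N^{2/3}$. Reducing the sum $\sum_{(u,v)} Q^\omega_{u,v}(X_0=p)\tspa\mu(\cdots)$ to a controlled number of Theorem \ref{thm:ub}-type bounds, and verifying that the decay rate beats the polynomial block count, is the delicate part and is exactly where the KPZ wandering input of item (ii) and the stationary-polymer input of item (iii) in the introduction must be exploited. A secondary point is that the constants in Theorem \ref{thm:ub} likely depend on $\epsilon$ (the distance of the direction from the axes), so the limits must be taken in the order $N\to\infty$ first, $\epsilon\downarrow 0$ second, which is exactly the ordering above.
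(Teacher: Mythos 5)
Your overall strategy matches the paper's: reduce via Theorem~\ref{thm:e_i-mu}(b) to bi-infinite measures directed into the open quadrants, then use Theorem~\ref{thm:ub} together with a countable family of environment shifts and a decomposition over where the path crosses a fixed transversal. However, there are two concrete gaps, plus one misreading that creates a phantom obstacle.

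First, the decomposition mismatch. You partition over the anti-diagonal vertex $X_0 = p$ and want to bound $Q^\w_{u,v}(X_0=p)$ after translating $p$ to the origin. But Theorem~\ref{thm:ub} bounds $p_0^{u,v}=Q_{u,v}(\pathsp^0_{u,v})$, the probability of using the \emph{horizontal edge} $\ed_0=(\zevec,\evec_1)$. A path with $X_0=\zevec$ can step to $\evec_2$ instead, in which case it does not use $\ed_0$ at all, so $\{X_0=\zevec\}\not\subseteq\pathsp^0_{u,v}$ and Theorem~\ref{thm:ub} does not directly control the quantity you wrote. The paper avoids this by decomposing over the $y$-axis crossing edge $\ed_i$, to which Theorem~\ref{thm:ub} applies after a vertical shift. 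Your decomposition can be salvaged (decompose $\{X_0=\zevec\}$ into the $\evec_1$-step case, handled by Theorem~\ref{thm:ub}, and the $\evec_2$-step case, handled by the $\evec_1\!\leftrightarrow\!\evec_2$ reflection of Theorem~\ref{thm:ub}), but you do not address this, and as written the step fails.

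Second, the passage from convergence in probability to an almost-sure conclusion is missing. Theorem~\ref{thm:ub} says $\P(\xi^\e_N>\delta)\le C\delta^{-1}N^{-1/24}$, which is only convergence of $\xi^\e_N$ to $0$ \emph{in probability}. You assert an event $\Omega_0$ of full measure "on which Theorem~\ref{thm:ub} applies," but Theorem~\ref{thm:ub} is a distributional statement and does not by itself single out any $\w$-event. The paper's fix is to define $\Omega''_\e=\{\varliminf_{N\to\infty}\xi^\e_{N_1}=0\}$, which has full probability because a nonnegative sequence converging to zero in probability has $\varliminf=0$ a.s., and then intersect countably many translates $T_{i\evec_2}\Omega''_{\e_k}$. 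This $\varliminf$ device is the load-bearing step that your sketch omits, and without it the bound $\mu(B_{p,\e})\le\eta(N,\e)$ is not available pathwise. Related to this, the paper conditions on the first and last crossings $X_\partial, X_{\rim{\partial}}$ of a carefully enlarged square $i\evec_2+\lzb -N_1,N_1\rzb^2$ (with $N_1=N+\ce{N^{2/3}}$), chosen so that any $(-\cD^\e)\times\cD^\e$-directed path through the origin must cross both $i\evec_2+\partial^{N_1,\e'}$ and $i\evec_2+\rim\partial^{N_1,\e'}$; your conditioning on $X_{-N},X_N$ lands on anti-diagonal slices and needs a further geometric argument (absent from your sketch) to reach the boundary segments $\partial^N,\rim\partial^N$ that Theorem~\ref{thm:ub} actually addresses.

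Finally, the "main obstacle" you identify is not there. Theorem~\ref{thm:ub} already gives the sup over \emph{all} of $\partial^N\times\rim\partial^N$, not a block-by-block estimate; the coarse-graining at scale $N^{2/3}$, the KPZ wandering input, and the stationary-polymer input are fully spent inside the proof of Theorem~\ref{thm:ub}. Once you have Theorem~\ref{thm:ub}, the sum $\sum_{u,v}Q^\w_{u,v}(\cdots)\,\mu(\cdots)$ is bounded by $\bigl(\sup_{u,v}Q^\w_{u,v}(\cdots)\bigr)\sum_{u,v}\mu(\cdots)\le\sup_{u,v}Q^\w_{u,v}(\cdots)$, so there is no block-count competition; the genuinely delicate remaining steps are precisely items one and two above, which you flag in passing but leave unfilled.
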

	
 Due to Theorem \ref{thm:e_i-mu}(b),  to prove Theorem \ref{thm:noex} we only need to rule out the possibility of bi-infinite polymer measures that are directed towards the open segments $\,]-\evec_2, -\evec_1[\,$ and $\,]\evec_2, \evec_1[\,$.   The detailed proof  is given in Section \ref{sec:pf-main}, after the development of preliminary estimates.  For the proof we take $Y_x$ to be a Ga$^{-1}(1)$ variable. 

For the interested reader, we mention that 	
the semi-infinite Gibbs measures of the inverse-gamma polymer are described in the forthcoming work \cite{fan-sepp-20+}. Earlier  results appeared in \cite{geor-rass-sepp-yilm-15} where such measures were obtained as almost sure weak limits of quenched point-to-point and point-to-line  polymer distributions.

\section{Stationary inverse-gamma polymer}  
\label{sec:invga}
The proof of Theorem  \ref{thm:noex}  relies on the fact that the inverse-gamma polymer possesses a stationary version with accessible distributional properties, first constructed in \cite{sepp-12-aop-corr}. 
This section gives a brief description of the stationary polymer and proves an estimate.  Further properties of the stationary polymer are developed in the appendixes.  

	Let $(Y_x)_{x\in\Z^2}$ be i.i.d.\ Ga$^{-1}(1)$ weights.  
A stationary version of the inverse-gamma polymer is defined in a quadrant by choosing suitable boundary weights   
 on  the south and west boundaries of the quadrant. For a parameter $0<\alpha<1$ and  a base vertex $o$, introduce independent  boundary weights  on the $x$- and $y$-axes emanating from $o$: 
 \be\label{IJ1}  I^\alpha_{o+i\evec_1}\sim{\rm Ga}^{-1}(1-\alpha)
 \qquad\text{and}\qquad
 J^\alpha_{o+j\evec_2}\sim{\rm Ga}^{-1}(\alpha)
 \qquad\text{for }  \ i,j\ge 1.  
 \ee
 The above convention, that the horizontal edge weight $I^\alpha$ has parameter $1-\alpha$ while the vertical $J^\alpha$ has $\alpha$,  is followed consistently and it determines various formulas  in the sequel. 

For  vertices  $p\ge o$   define the partition functions 
	\begin{align}\label{g:Z1}
	Z^\alpha_{o,p}=\sum_{x_\brbullet\tspa\in\tspa\pathsp_{o,p}} \prod_{i=0}^{\abs{p-o}_1} \wt\wgtd_{x_i} 
	\quad\text{with weights}\quad
	\wt\wgtd_x
	=\begin{cases}  
	1, &x=o\\[2pt] 
	Y_x,  &x\in o+\Z^2_{>0}\\[2pt]  
	I^\alpha_{x},  &x\in o+(\Z_{>0})\evec_1\\[2pt] 
	J^\alpha_{x},  & x\in o+(\Z_{>0})\evec_2.
	\end{cases}
	\end{align}
Note that now a weight at $o$ does not count. 	 The superscript $\alpha$ distinguishes $Z^\alpha_{o,p}$ from the generic partition function $Z_{o,p}$ of \eqref{h:Z}. 
 The stationarity property is that the joint distribution of the  ratios $Z^\alpha_{o,x}	/Z^\alpha_{o,x-\evec_i}$ is invariant under translations of $x$ in the quadrant $o+\Z_{\ge0}^2$.   See Appendix \ref{sec:stat-pol} for more details. 
 
 The quenched polymer distribution corresponding to \eqref{g:Z1} is given by 
 $Q^\alpha_{o,p}(x_\bbullet)=  (Z^\alpha_{o,p})^{-1}  \prod_{i=0}^{\abs{p-o}_1} \wt\wgtd_{x_i}$   for $x_\bbullet\in\pathsp_{o,p}$,  and the annealed measure is  $P^\alpha_{o,p}(x_\bbullet)=\E[Q^\alpha_{o,p}(x_\bbullet)]$. 
 
 It will be convenient to consider also backward polymer processes whose paths proceed in the southwest direction and the stationary version starts with boundary weights on the north and east.   For vertices $o\ge p$ let $\rim{\pathsp}_{o,p}$ be the set of down-left paths starting at $o$ and terminating at $p$. As sets of vertices and edges, paths in   $\rim{\pathsp}_{o,p}$ are exactly the same as those in  $\pathsp_{p,o}$. The difference is that in  $\rim{\pathsp}_{o,p}$  paths are indexed in the down-left direction.  

 For $o\ge p$, backward partition functions are then defined with i.i.d.\ bulk weights as 
 	\begin{align}
	\rim{Z}_{o,p}=\sum_{x_\brbullet\tspa\in\tspa\rim{\pathsp}_{o,p}} \prod_{i=0}^{\abs{o-p}_1} \wgtd_{x_i} 
	\end{align}
and in the stationary case as 
 	\begin{align}\label{g:Z1rev}
	\rim Z^\alpha_{o,p}=\sum_{x_\brbullet\tspa\in\tspa\rim\pathsp_{o,p}} \prod_{i=0}^{\abs{o-p}_1} \wt\wgtd_{x_i} 
	\quad\text{with weights}\quad
	\wt\wgtd_x
	=\begin{cases}  
	1, &x=o\\[2pt] 
	Y_x,  &x\in o-\Z^2_{>0}\\[2pt]  
	I^\alpha_{x},  &x\in o-(\Z_{>0})\evec_1\\[2pt] 
	J^\alpha_{x},  & x\in o-(\Z_{>0})\evec_2.
	\end{cases}
	\end{align}
The independent boundary weights $I^\alpha_{o-i\evec_1}$ and $J^\alpha_{o-j\evec_2}$ ($i,j\ge 1$)  have the distributions \eqref{IJ1}. 
   	
We define functions that capture the wandering of a path 	  $x_\bbullet\in\pathsp_{o,p}$.  
The (signed)  exit point or exit time   $\ex_{o,p}=\ex_{o,p}(x_\bbullet)$ marks the position where the path $x_\bbullet$  leaves the southwest boundary and moves into the bulk, with the convention that a negative value indicates a jump off the $y$-axis.    More generally, for 3 vertices $o\le v<p$,  $\ex_{o,v,p}=\ex_{o,v,p}(x_\bbullet)$ marks the position where  $x_\bbullet\in\pathsp_{o,p}$ enters the rectangle $\lzb v+\evec_1+\evec_2, p\rzb$, again with a negative sign if this entry happens on the east edge $\{v+\evec_1+j\evec_2: 1\le j\le (p-v)\cdot\evec_2\}$. Here is the precise definition: 
\be\label{exit77} 
\ex_{o,v,p}(x_\bbullet)
=\begin{cases}  -\max\{j\ge 1: v+j\evec_2\in x_\bbullet\}, &\text{if } x_\bbullet\cap(v+(\Z_{>0})\evec_2)\ne\emptyset\\[2pt]
\max\{i\ge 1: v+i\evec_1\in x_\bbullet\}, &\text{if } x_\bbullet\cap(v+(\Z_{>0})\evec_1)\ne\emptyset. 
\end{cases} 
\ee
Exactly one of the  two cases above happens for each path 	  $x_\bbullet\in\pathsp_{o,p}$.  The exit point from the boundary is then defined by $\ex_{o,p}=\ex_{o,o,p}$. 

An analogous definition is made for the backward polymer.   For $o\ge v> p$ and  $x_\bbullet\in\rim{\pathsp}_{o,p}$, 
\[ 
\rim\ex_{o,v,p}(x_\bbullet)
=\begin{cases}  -\max\{j\ge 1: v-j\evec_2\in x_\bbullet\}, &\text{if } x_\bbullet\cap(v-(\Z_{>0})\evec_2)\ne\emptyset\\[2pt]
\max\{i\ge 1: v-i\evec_1\in x_\bbullet\}, &\text{if } x_\bbullet\cap(v-(\Z_{>0})\evec_1)\ne\emptyset. 
\end{cases} 
\] 
  The signed exit point from the northeast  boundary is  $\rim\ex_{o,p}=\rim\ex_{o,o,p}$. 
 	
\medskip 

The remainder of this section is devoted to an estimate needed in the body of the proof.  
First recall that the {\it digamma function} $\psi_0=\Gamma'/\Gamma$ is strictly concave and strictly increasing on $(0,\infty)$,
	with $\psi_0(0+)=-\infty$ and $\psi_0(\infty)=\infty$.   Its derivative, the {\it trigamma function} $\psi_1=\psi_0'$, is positive,  strictly convex, and strictly decreasing, with
	$\psi_1(0+)=\infty$ and $\psi_1(\infty)=0$.   These functions appear as means and variances:  
\be\label{invga7} 	
	\text{for  }\ \eta\sim \text{\rm Ga}^{-1}(\alpha),  \ \  \E[\log \eta]=-\psi_0(\alpha) \quad\text{and}\quad \Vvar(\log \eta)=\psi_1(\alpha).  
	 \ee
	
 In the stationary polymer  $Z^\alpha_{o,p}$ in \eqref{g:Z1}, the boundary weights are stochastically larger than the bulk weights.  Consequently the polymer path prefers to run along one of the  boundaries,  its choice determined by the direction $(p-o)/\abs{p-o}_1\in[\evec_2, \evec_1]$.   For each parameter $\alpha\in(0,1)$ there  a particular {\it characteristic direction} $\xi(\alpha)\in\,]\evec_2, \evec_1[$ at which the attraction of the two boundaries balances out.  For $\rho\in[0,1]$ this function is given by 
 	\begin{align}\label{XtR}
	\xi(\rho)=\Bigl(\frac{\psi_1(\rho)}{\psi_1(\rho)+\psi_1(1-\rho)} \, ,\,
	\frac{\psi_1(1-\rho)}{\psi_1(\rho)+\psi_1(1-\rho)}\Bigr)\in[\evec_2,\evec_1].
	\end{align}
The extreme cases are interpreted as $\xi(0)=\evec_1$ and $\xi(1)=\evec_2$.  
 The inverse function $\rho=\rho(\xi)$ of a  direction  
	  $\xi=(\xi_1, \xi_2)\in[\evec_2,\evec_1]$   is defined by 
	$\rho(\evec_2)=1$,  $\rho(\evec_1)=0$, and 
	\begin{align*}
	-\xi_1\tspb\trigamf(1-\rho(\xi))+\xi_2\tspb\trigamf(\rho(\xi))=0
	\quad\text{for $\xi \in \,]\evec_2,\evec_1[\,$ }. 
	\end{align*}

The function $\rho(\xi)$ is  a strictly decreasing bijective 
	mapping of  $\xi_1\in[0,1]$ onto  $\rho\in[0,1]$, or, equivalently, a strictly decreasing mapping of $\xi$ in the down-right order. 
The  significance of 	the characteristic direction for fluctuations is that $\ex_{o,p}$ is of order $\abs{p-o}_1^{2/3}$ if and only if $p-o$ is directed towards $\xi(\alpha)$, and of order $\abs{p-o}_1$ in all other directions. These fluctuation questions were first investigated in \cite{sepp-12-aop-corr}.   

We insert here a lemma on the regularity of the characteristic direction.

	\begin{lemma}\label{lem:psi}
		There exist  functions  $\fo>0$ and $B>0$ on $(0,1)$ such that, whenever  $\rho_0\in(0,1)$
		and 
		$|\delta-\rho_0|<\rho_1=\frac{1}{2}(\rho_0\wedge(1-\rho_0))$, 
		\begin{align}\label{psi1}
		\frac{\xi_2(\rho_0+\delta)}{\xi_1(\rho_0+\delta)}-\frac{\xi_2(\rho_0)}{\xi_1(\rho_0)}&= \fo(\rho_0)\delta+f(\rho_0, \delta) 
		\end{align} 
		where the function $f$ satisfies 
		\begin{align}  
		|f(\rho_0, \delta)|&\leq B(\rho_0)\delta^2\label{psi2}.
		\end{align}
		The functions $\fo$, $\fo^{-1}$ and $B$ are   bounded  on any compact subset of  $(0,1)$.
	\end{lemma}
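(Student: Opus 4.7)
The plan is to recognize the lemma as a quantitative first-order Taylor expansion of the function $g(\rho) := \xi_2(\rho)/\xi_1(\rho)$. From the explicit formula \eqref{XtR}, this ratio reduces to
\begin{equation*}
g(\rho) \;=\; \frac{\trigamf(1-\rho)}{\trigamf(\rho)},
\end{equation*}
which is real-analytic on $(0,1)$ because $\trigamf$ is smooth and strictly positive there. I will take $\fo(\rho_0) := g'(\rho_0)$ and identify $f(\rho_0,\delta)$ as the second-order Taylor remainder.

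First I would compute $g'$ explicitly. Writing $\psi_2 := \trigamf'$ (which is strictly negative on $(0,\infty)$ because $\trigamf$ is strictly decreasing), the quotient rule gives
\begin{equation*}
g'(\rho) \;=\; -\frac{\psi_2(1-\rho)}{\trigamf(\rho)} \;-\; \frac{\trigamf(1-\rho)\,\psi_2(\rho)}{\trigamf(\rho)^2}.
\end{equation*}
Both summands are strictly positive, so $\fo$ is positive on $(0,1)$. Continuity of $\trigamf$ and $\psi_2$, together with the fact that $\trigamf$ is bounded above and away from zero on any compact subinterval of $(0,1)$, immediately yield that $\fo$ and $1/\fo$ are bounded on compact subsets of $(0,1)$.

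Next I would invoke Taylor's theorem with Lagrange remainder (reading the hypothesis as the natural condition $|\delta| < \rho_1 = \tfrac{1}{2}(\rho_0 \wedge (1-\rho_0))$). This restriction ensures that $[\rho_0-\rho_1,\rho_0+\rho_1]$ is contained in a compact subinterval of $(0,1)$; one verifies this by considering the cases $\rho_0 \le 1/2$ and $\rho_0 > 1/2$ separately. On that interval $g''$ is continuous, and Taylor's formula produces
\begin{equation*}
g(\rho_0+\delta) - g(\rho_0) \;=\; \fo(\rho_0)\delta \;+\; f(\rho_0,\delta), \qquad |f(\rho_0,\delta)| \;\le\; \tfrac{1}{2} M(\rho_0)\,\delta^2,
\end{equation*}
with $M(\rho_0) := \sup_{|\eta|\le \rho_1} |g''(\rho_0+\eta)|$. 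Setting $B(\rho_0) := \tfrac{1}{2} M(\rho_0)$ verifies \eqref{psi2}. For $\rho_0$ in any compact set $K \subset (0,1)$, the union of the windows $[\rho_0-\rho_1,\rho_0+\rho_1]$ over $\rho_0\in K$ is again a compact subset of $(0,1)$, so continuity of $g''$ bounds $B$ on $K$.

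There is no serious obstacle here; the only care required is in bookkeeping, to ensure that $\fo(\rho_0)$ and $B(\rho_0)$ depend only on $\rho_0$ (not on $\delta$) and behave locally uniformly in $\rho_0$. Both points are automatic from the smoothness and strict positivity of the polygamma functions on $(0,1)$.
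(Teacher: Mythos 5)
Your proof is correct and follows essentially the same route as the paper: rewrite $\xi_2/\xi_1 = \trigamf(1-\rho)/\trigamf(\rho)$, apply a first-order Taylor expansion, and compute the derivative via the quotient rule (your formula for $g'$ agrees with the paper's $-\frac{\trigamf'(1-\rho_0)\trigamf(\rho_0)+\trigamf'(\rho_0)\trigamf(1-\rho_0)}{\trigamf(\rho_0)^2}$, and both are positive since $\trigamf'<0$). You also correctly read the hypothesis as $|\delta|<\rho_1$, which is the natural condition and the one used in the proof of Lemma \ref{lem-lb1}; the paper's own argument is terser but relies on the same bookkeeping about $\trigamf$ being bounded away from $0$ and $\infty$ on compact subintervals of $(0,1)$.
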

	\begin{proof}
		As the function $\psi_1$ is smooth on $(0,\infty)$
		\begin{align*}
		\frac{\xi_2(\rho_0+\delta)}{\xi_1(\rho_0+\delta)}-\frac{\xi_2(\rho_0)}{\xi_1(\rho_0)}
		&=\frac{\psi_1(1-(\rho_0+\delta))}{\psi_1(\rho_0+\delta)}-\frac{\psi_1(1-\rho_0)}{\psi_1(\rho_0)}\\
		&=-\, \delta\tspa\frac{\psi'_1(1-\rho_0)\psi_1(\rho_0)+\psi_1'(\rho_0)\psi_1(1-\rho_0)}{\psi_1(\rho_0)^2}+f(\rho_0,\delta) 
		\end{align*}
		where $\psi_1'<0$ and  $f$ satisfies \eqref{psi2}.
		\end{proof}
		
Recall that to prove  Theorem \ref{thm:noex},  our intention is to rule out bi-infinite polymer measures whose forward  direction is  into the open first quadrant, and whose backward direction is into the open third quadrant. The main step towards this is that, as $N$ becomes large, a polymer path from southwest to northeast across the square $\lzb -N,N\rzb^2$, with slope bounded away from $0$ and $\infty$, cannot cross the $y$-axis anywhere close to the origin.

 To achieve this we control partition functions from the southwest  boundary of the square $\lzb-N,N\rzb^2$ to the  interval  $\cJ=\lzb -N^{{2}/{3}}\evec_2,N^{{2}/{3}}\evec_2 \rzb$ on the $y$-axis,  and backward partition functions  from the northeast  boundary of the square $\lzb-N,N\rzb^2$  to the interval $\rim\cJ=\evec_1+\cJ$ shifted one unit  off the $y$-axis.

	Let $\e>0$. We establish notation for the southwest portion of the boundary of the square $\lzb -N,N\rzb^2$ that is bounded by the  lines of slopes $\e$ and $\e^{-1}$.    With $\mathcal W$ for west and $\cS$ for south, let  $\partial^N_\mathcal{W} =\{-N\}\times \lzb -N,-\e N \rzb$, $\partial^N_\mathcal{S}=\lzb -N,-\e N\rzb \times \{-N\}$, and then $\partial^N=\partial^{N, \e}=\partial^N_\mathcal{W}\cup \partial^N_{S}$. 
	The parameter $\e>0$ stays fixed for most  of the proof, and hence will be suppressed from much of the notation.   
	We also let $o_i=(-N,-\e N)$ and $o_f=(-\e N,-N)$.  A lattice point $o=(o_1, o_2)\in \partial^N$ is associated    with its (reversed) direction  vector $ \xi(o)=(\xi_1(o), 1-\xi_1(o))\in \,]\evec_2,\evec_1[\,$ and parameter  $\rho(o)\in (0,1)$   through the relations
	\begin{align}
	\xi(o)&=  \left(\frac{o_1}{o_1+o_2},\,\frac{o_2}{o_1+o_2}\right)\label{xi-o}
	\\ 
	\intertext{and indirectly via \eqref{XtR}:} 
	\rho(o)&=\rho(\xi(o))  
	 \ \iff\ \xi(\rho(o)) =\xi(o) 
	\label{rho-o}
	\end{align}
  For all $o\in \partial^N$ we have the bounds 
	\[   \xi(o)\in \Bigl[ \Bigl(\frac{1}{1+\e},\frac{\e}{1+\e}\Bigr), \Bigl(\frac{\e}{1+\e},\frac{1}{1+\e}\Bigr)\Bigr]=[\xi_i,\xi_f].
	\]
	If we define the extremal parameters (for a given $\e>0$) by 
	\begin{align*}
	\rho_i=\rho(o_i)=\rho\biggl( \frac{1}{1+\e},\frac{\e}{1+\e}\biggr)
	\qquad\text{and}\qquad
	\rho_f=\rho(o_f)=\rho\biggl(\frac{\e}{1+\e},\frac{1}{1+\e}\biggr)
	\end{align*}
then we have the uniform bounds 
\be\label{2089}
0< \rho_i \le \rho(o)\le \rho_f < 1 
\qquad\text{ for all }  o\in\partial^N=\partial^{N\!,\tspa\e}.   
\ee	
	 


 For $o\in \partial^N$ define perturbed parameters (with dependence on $r, N$ suppressed from the notation): 
\be\label{rho*} 
	\rhodown{\rho}(o)=\rho(o)-rN^{-\frac{1}{3}}
	\qquad\text{and}\qquad 
	\rhoup{\rho}(o)=\rho(o)+rN^{-\frac{1}{3}}. 
\ee
	The variable $r$ can be a function of $N$ and become  large but always $r(N)N^{-1/3}\to0$ as $N\to\infty$. Then for $N\ge N_0(\e)$  the perturbed parameters are bounded  uniformly away from $0$ and $1$: 
\be\label{2095}
0< \rho_0(\e)  \le  \rhodown{\rho}(o) 
	<  \rhoup{\rho}(o)  \le \rho_1(\e) <1 
\quad\text{ for all }  o\in\partial^N=\partial^{N\!,\tspa\e}\text{  and  } N\ge  N_0(\e).   
\ee

	We consider the stationary processes  $Z^{\rhoup{\rho}(o)}_{o,\bbullet}$ and $Z^{\rhodown{\rho}(o)}_{o,\bbullet}$.  Our next lemma shows that the perturbation $r$ can be taken such that,  for all  $o\in \partial^N$ and $x\in \cJ=\lzb -N^{{2}/{3}}\evec_2,N^{{2}/{3}}\evec_2 \rzb$,  on the scale $N^{2/3}$  the exit point under $Q^{\rhoup{\rho}(o)}_{o,x}$ is far enough in the $\evec_1$ direction, and under  $Q^{\rhodown{\rho}(o)}_{o,x}$ far enough in the $\evec_2$ direction,   with high probability.

	\begin{lemma}\label{lem-lb1} For each $\e>0$   there exist finite positive constants 
$c(\e), C_0(\e), C_1(\e)$ and $N_0(\e)$  such that, whenever  $1\le d\le c(\e)N^{1/3}$, $C_0(\e)d\le r\le c(\e)N^{1/3}$,  $N\ge N_0(\e)$, 	 $o\in\partial^N$,  and $y>0$,  we have the bounds 
		\be\label{lb-1}   \P\Big\{\sup_{x\in\cJ}Q^{\rhodown{\rho}(o)}_{o,x}\big(\ex_{o,x}\ge -dN^{\frac{2}{3}}\big)>y  \Big\} \le C_1(\e)y^{-1}r^{-3}
		\ee
		and 
		\be\label{lb-2}   \P\Big\{\sup_{x\in\cJ}Q^{\rhoup{\rho}(o)}_{o,x}\big(\ex_{o,x}\le dN^{\frac{2}{3}}\big)>y  \Big\} \le C_1(\e)y^{-1}r^{-3}.
		\ee
	\end{lemma}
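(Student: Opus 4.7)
My plan is to first reduce the supremum over $x\in\cJ$ to a single endpoint by a monotonicity argument, then apply Markov's inequality, and finally invoke the cubic exit-point tail bound standard for the stationary inverse-gamma polymer. I focus on \eqref{lb-1}; the bound \eqref{lb-2} follows by the symmetric argument under the diagonal reflection $\evec_1\leftrightarrow\evec_2$, which exchanges the roles of $\rhodown{\rho}(o)$ and $\rhoup{\rho}(o)$.

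For the reduction step, I expect the signed exit $\ex_{o,k\evec_2}$ under $Q^{\rhodown{\rho}(o)}_{o,k\evec_2}$ to be stochastically nonincreasing in $k$: raising the endpoint $x=k\evec_2$ up the $y$-axis makes the vertical west boundary relatively more attractive than the horizontal south boundary, because the path must cover more northward displacement. Consequently $k \mapsto Q^{\rhodown{\rho}(o)}_{o, k\evec_2}(\ex_{o,k\evec_2} \ge -dN^{2/3})$ is nonincreasing on $\lzb -N^{2/3}, N^{2/3}\rzb$, and the supremum over $\cJ$ is attained at $x_\star = -N^{2/3}\evec_2$. This monotonicity should be a standard coupling consequence for the stationary polymer (compare the constructions in Appendix \ref{sec:stat-pol}). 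Markov's inequality then gives
\begin{align*}
\P\Bigl\{Q^{\rhodown{\rho}(o)}_{o, x_\star}(\ex_{o, x_\star} \ge -dN^{2/3}) > y\Bigr\} \;\le\; y^{-1}\, P^{\rhodown{\rho}(o)}_{o, x_\star}\bigl(\ex_{o, x_\star} \ge -dN^{2/3}\bigr),
\end{align*}
reducing the claim to showing that the annealed probability on the right is at most $C_1(\e)\tsp r^{-3}$.

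For the annealed bound, Lemma \ref{lem:psi} gives that the characteristic direction $\xi(\rhodown{\rho}(o))$ is more $\evec_1$-horizontal than $\xi(o)$ by an amount $\fo(\rho(o))\tspa r\tspa N^{-1/3} + O(r^2 N^{-2/3})$, while the direction from $o$ to $x_\star$ agrees with $\xi(o)$ up to an $O(N^{-1/3})$ correction. Hence the typical signed exit under $P^{\rhodown{\rho}(o)}_{o, x_\star}$ is negative of magnitude of order $r N^{2/3}$, and the event $\{\ex \ge -dN^{2/3}\}$ lies at distance of order $r N^{2/3}$ from this typical value once $r\ge C_0(\e)\tsp d$ with $C_0$ large enough (depending only on $\e$). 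I would then extract the cubic decay $O(r^{-3})$ from the standard chain for stationary KPZ-type models: the Burke/variance-exit identity of the stationary inverse-gamma polymer (Appendix \ref{sec:stat-pol}) expresses $\Vvar(\log Z^{\rhodown{\rho}(o)}_{o, x_\star})$ in terms of $\E[\ex^+]$ and $\E[\ex^-]$; a Taylor expansion of $\E[\log Z^\rho_{o,x_\star}]$ in $\rho$ about $\rho(o)$, using \eqref{psi2} for the quadratic control, transfers the expansion constants into bounds on the expected one-sided exits; and the KPZ variance bound $\Vvar(\log Z) = O(N^{2/3})$ from Appendix \ref{sec:kpz5} provides the numerator. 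Balancing the linear and quadratic perturbation terms against the $N^{2/3}$ variance produces the cubic denominator $r^3$.

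The technically demanding step is the annealed estimate, and in particular the extraction of the $r^{-3}$ rate rather than a weaker $r^{-1}$ or $r^{-2}$: this requires a delicate three-way interplay between the KPZ variance bound, the variance-exit identity, and the quadratic expansion of the free energy in the parameter $\rho$. The uniformity in $o \in \partial^N$ of the constants is ensured by the bounds \eqref{2089}--\eqref{2095}, which confine both $\rho(o)$ and the perturbed parameters $\rhodown{\rho}(o), \rhoup{\rho}(o)$ to a compact subinterval of $(0,1)$ depending only on $\e$.
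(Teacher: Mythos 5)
Your overall structure matches the paper's: reduce the supremum over $\cJ$ to a single extremal endpoint by a monotonicity/coupling argument, apply Markov's inequality, and appeal to a cubic tail bound for the stationary exit point. The reduction step is correct in substance (for \eqref{lb-1} the supremum sits at the bottom endpoint $-N^{2/3}\evec_2$, for \eqref{lb-2} at the top), though the relevant monotonicity is the path-ordering inequality \eqref{pmon} from Appendix~\ref{app:genpol}, not a property of the stationary construction in Appendix~\ref{sec:stat-pol}.

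Where you diverge is in the annealed estimate, and this is where the paper's actual argument is substantially more economical. After the Markov step you are facing an \emph{off-characteristic} annealed exit probability: the endpoint direction matches $\xi(\rho(o))$ up to $O(N^{-1/3})$, but the parameter has been shifted to $\rhodown{\rho}(o)=\rho(o)-rN^{-1/3}$, so the endpoint sits at distance of order $rN^{2/3}$ from the characteristic line of $\rhodown{\rho}(o)$, and Theorem~\ref{thm:kpz3} (which requires $\abs{v-N\xi(\rho)}_1\le\kappa N^{2/3}$ for a fixed $\kappa$) does not directly apply. Your proposed remedy is to re-enter the Burke/variance-exit machinery of \cite{sepp-12-aop-corr} and re-derive the cubic tail in the off-characteristic regime from scratch; that would work, since it is precisely the mechanism behind Theorem~\ref{thm:kpz3}, but it is a heavy detour and you only sketch it. The paper instead rewrites $\{\ex_{o,x_\star}<dN^{2/3}\}$ as $\{\ex_{o,\tspa o+dN^{2/3}\evec_1,\tspa x_\star}<0\}$, applies \eqref{m:830} to move the base point to $o+dN^{2/3}\evec_1$, and then shifts the base point once more by $-\ell\evec_2$, choosing $\ell$ (via Lemma~\ref{lem:psi}) so that the direction from the new base point to the endpoint is \emph{exactly} characteristic for the perturbed parameter; the hypotheses $1\le d\le c(\e)N^{1/3}$ and $C_0(\e)d\le r\le c(\e)N^{1/3}$ then force $\ell\ge c_0(\e)rN^{2/3}$. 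At that point the event in question becomes a characteristic-direction exit deviation of size $\gtrsim rN^{2/3}$, and Theorem~\ref{thm:kpz3} yields $C(\e)r^{-3}$ directly. So the two approaches buy the same estimate, but the paper's base-point shift lets one quote the stated KPZ bound rather than re-prove it; your plan as written still has the key cubic bound as an unexecuted sketch.
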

	
	\begin{proof}
		We prove \eqref{lb-2} as \eqref{lb-1} is similar.  		
		We turn the quenched probability into a form to which we can apply fluctuation bounds.  The justifications of the steps below go as follows. 
	\begin{enumerate} [(i)] 
	\item  The first inequality below is from  \eqref{pmon}. 
	\item Observe that the path leaves the boundary to the left  of the point $o+dN^{2/3}\evec_1$ if and only if   it intersects the vertical line $o+dN^{2/3}\evec_1 +j\evec_2$ at some $j\ge 1$. 
	\item  Move the base point from $o$ to $o+dN^{2/3}\evec_1$ and apply  \eqref{m:830}. By the stationarity, the new boundary weights on the axes emanating from $o+dN^{2/3}\evec_1$ have the same distribution as the original ones. This gives the equality in distribution. 
	\item Choose an integer $\ell$ so that the vector from $o+dN^{2/3}\evec_1-\ell\evec_2$ to  $N^{{2}/{3}}\evec_2$ points in the characteristic direction $\xi(\rhoup{\rho}(o))$.   	Apply  \eqref{m:830} and stationarity.	
\end{enumerate} 	
		\begin{align*} 
			 &\sup_{x\in\cJ}Q^{\rhoup{\rho}(o)}_{o,x}\big(\ex_{o,x}< dN^{\frac{2}{3}}\big)  \leq  Q^{\rhoup{\rho}(o)}_{o,\tsp N^{2/3}\evec_2}\big(\ex_{o,\tsp N^{2/3}\evec_2}< dN^{\frac{2}{3}}\big) \\
	&\qquad =	 
		 Q^{\rhoup{\rho}(o)}_{o,\tsp N^{2/3}\evec_2}\big(\ex_{o, \tspa o+dN^{2/3}\evec_1, \tspa N^{2/3}\evec_2}< 0 \big)  
		 \deq Q^{\rhoup{\rho}(o)}_{o+dN^{2/3}\evec_1,\tspa N^{2/3}\evec_2}\big(\ex_{o+dN^{2/3}\evec_1, \tspa N^{2/3}\evec_2}< 0 \big) \\[3pt] 
		 &\qquad 
		 = Q^{\rhoup{\rho}(o)}_{o+dN^{2/3}\evec_1-\ell\evec_2, \tspa N^{2/3}\evec_2}\big(\ex_{o+dN^{2/3}\evec_1-\ell\evec_2, \tspa N^{2/3}\evec_2}< -\ell\tsp\big). 
		\end{align*}

We  show   that 
$\ell\ge c_0(\e)rN^{2/3}$ for a constant $c_0(\e)$.  Let $o=-(Na, Nb)$, with $\e\le a,b\le 1$.  
		Lemma \ref{lem:psi} 	gives the next identity.   The $O$-term hides an $\e$-dependent constant that  is uniform for all $\rho(o)$ because, as observed in \eqref{2089},  the assumption $o\in\partial^N$ bounds $\rho(o)$ away from $0$ and $1$. 
\begin{align*}
\frac{N^{2/3}+Nb+\ell}{Na-dN^{2/3}} = \frac{\xi_2(\rhoup{\rho}(o))}{\xi_1(\rhoup{\rho}(o))} = \frac{b}a + \fo(\rho(o)) rN^{-1/3}  +O(r^2N^{-2/3}) . 
\end{align*}		
From this we deduce 
\begin{align*}
\ell =  \fo(\rho(o)) arN^{2/3} -  \frac{b}a d N^{2/3} -N^{2/3} - \fo(\rho(o)) rdN^{1/3}  +O(r^2N^{1/3}) +O(r^2d) . 
\end{align*}
Recall from Lemma \ref{lem:psi} that $\fo(\rho(o))>0$ is uniformly bounded away from zero for $o\in\partial^N$.  
For a small enough constant $c(\e)$ and large enough constants $C_0(\e)$ and $N_0(\e)$, if we have   $1\le d\le c(\e)N^{1/3}$, $C_0(\e)d\le r\le c(\e)N^{1/3}$ and $N\ge N_0(\e)$,    the above simplifies to  $\ell\ge c_0(\e)rN^{2/3}$. 

We can derive the final bound.  
		\begin{align*}
			&\P\Big\{\sup_{x\in\cJ}Q^{\rhoup{\rho}(o)}_{o,x}\big(\ex_{o,x}< dN^{\frac{2}{3}}\big)>y  \Big\}\\
			&\qquad  \leq   \P\Big\{  Q^{\rhoup{\rho}(o)}_{o+dN^{2/3}\evec_1-\ell\evec_2, \tspa N^{2/3}\evec_2}\big(\ex_{o+dN^{2/3}\evec_1-\ell\evec_2, \tspa N^{2/3}\evec_2}< -\ell\tsp\big)   >y  \Big\}  \\  
		 &\qquad \le y^{-1} \tspa \E\Big[Q^{\rhoup{\rho}(o)}_{o+dN^{2/3}\evec_1-\ell\evec_2, \tspa N^{2/3}\evec_2}\big(\ex_{o+dN^{2/3}\evec_1-\ell\evec_2, \tspa N^{2/3}\evec_2}< -c(\e)rN^{2/3}\tsp\big)  \Big]\\
			&\qquad 
			=y^{-1} \tsp P^{\rhoup{\rho}(o)}_{o+dN^{2/3}\evec_1-\ell\evec_2, \tspa N^{2/3}\evec_2}\big(\ex_{o+dN^{2/3}\evec_1-\ell\evec_2, \tspa N^{2/3}\evec_2}< -c_0(\e)rN^{2/3}\tsp\big)\leq C_1(\e)y^{-1} \tspa r^{-3}.
		\end{align*} 
The final inequality comes from Theorem \ref{thm:kpz3}. 	
	\end{proof}

	\section{Estimates for paths across a large square} \label{sec:estim} 
	
After the preliminary work above we turn to develop the estimates that prove the main theorem. 	
	Throughout,  $\dvec=(d_1,d_2)\in \Z_{\ge 1}^2$ denotes  a pair of parameters that control the coarse graining on the southwest and northeast boundaries  of the square $\lzb-N,N\rzb^2$.   For  $o\in \partial^N$  let 
	\begin{align*}
	\cI_{o,\dvec}=\{u \in \partial^N: \abs{u-o}_1 
	\leq \tfrac12{d_1}N^\frac{2}{3}\}. 
	\end{align*}
	Let $o_c\in\cI_{o,\dvec}$ denote the minimal point of $\cI_{o,\dvec}$ in the coordinatewise partial order, that is, defined by the requirement that 
\[  o_c\in\cI_{o,\dvec} \quad\text{and}\quad   o_c\le u \ \ \forall u\in\cI_{o,\dvec}. \] 	
This setting is illustrated in Figure \ref{fig:points}. 

On the rectangle $\lzb o_c, N\evec_2\rzb$ we define coupled polymer processes.   For each  $u\in\cI_{o,\dvec}$ we have   the bulk process $Z_{u,\brbullet}$  that uses  $\text{\rm Ga}^{-1}(1)$   weights $Y$.   Two stationary comparison processes based at $o_c$  have parameters $\rhodown{\rho}(o_c)$ and $\rhoup{\rho}(o_c)$ defined as in \eqref{rho*}. Their  basepoint is taken as  $o_c$ so that we get simultaneous control over  all the processes based at vertices  $u\in \cI_{o,\dvec}$.  

Couple the boundary weights on the south and west boundaries of the rectangle $\lzb o_c, N\evec_2\rzb$ as described in Theorem \ref{thm:st-lpp} in Appendix \ref{sec:stat-pol}. In particular, for $k,\ell\ge 1$ we have the inequalities 
\be\label{2160} 
Y_{o_c+k\evec_1} \le I^{\rhodown{\rho}(o_c)}_{o_c+k\evec_1}\le I^{\rhoup{\rho}(o_c)}_{o_c+k\evec_1}
\quad\text{ and }\quad 
Y_{o_c+\ell\evec_2} \le J^{\rhoup{\rho}(o_c)}_{o_c+\ell\evec_2}\le J^{\rhodown{\rho}(o_c)}_{o_c+\ell\evec_2}. 
\ee

For all these coupled processes we define ratios of the partition functions from the base point  to the $y$-axis, for all $u\in \cI_{o,\dvec}$ and $i\in\lzb-N^{2/3}, N^{2/3}\rzb $: 	
\be\label{2165} 
	J^u_i=\frac{Z_{u,i\evec_2}}{Z_{u,(i-1)\evec_2}} \,,
\quad	J^{\rhodown{\rho}(o_c)}_i=\frac{Z^{\rhodown{\rho}(o_c)}_{o_c,i\evec_2}}{Z^{\rhodown{\rho}(o_c)}_{o_c,(i-1)\evec_2}} 
	\qquad\text{and}\qquad 
	J^{\rhoup{\rho}(o_c)}_i=\frac{Z^{\rhoup{\rho}(o_c)}_{o_c,i\evec_2}}{Z^{\rhoup{\rho}(o_c)}_{o_c,(i-1)\evec_2}}. 
\ee  

	
	Recall that $\cJ=\lzb -N^\frac{2}{3}\evec_2,N^\frac{2}{3}\evec_2 \rzb$. 
	
	\begin{lemma}\label{lem ge1} 
		For $0<y<1$, define  the event
		\be\label{Aod} 
		A_{o_c,\dvec,y} 
			= \left\{\, \inf_{x\in\cJ}Q^{\rhodown{\rho}(o_c)}_{o_c,\tsp x}\big(\ex_{o_c,\tsp x}<-d_1N^{\frac{2}{3}}\big)\ge 1-y\, , \; \inf_{x\in\cJ}Q^{\rhoup{\rho}(o_c)}_{o_c,\tsp x}\big(\ex^{\rhoup{\rho}(o_c)}_{o_c,\tsp x}>  d_1N^{\frac{2}{3}}\big)\ge 1-y \right\}.  
		\ee 
		Under the assumptions of Lemma \ref{lem-lb1} for $d=d_1$ we have the bound 
		\be\label{Aod2}   \P\bigl(A_{o_c,\dvec,y}\bigr) \ge 1-C_1(\e)y^{-1} r^{-3}. \ee 
		On the event $A_{o_c,\dvec,y}$, for any $m,n\in\lzb-N^{2/3}, N^{2/3}\rzb$ such that $m< n$ we have the inequalities 
		\be\label{Aod1} 
		(1-y)\prod_{i=m+1}^{n}J^{\rhoup{\rho}(o_c)}_i \; \leq \; \prod_{i=m+1}^{n}J^u_i \; \leq \; \frac{1}{1-y}\prod_{i=m+1}^{n}J^{\rhodown{\rho}(o_c)}_i\quad \forall u\in \cI_{o,d}.
		\ee 
	\end{lemma}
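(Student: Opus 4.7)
Proof plan.

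The probability estimate \eqref{Aod2} follows at once from Lemma \ref{lem-lb1} applied with $o = o_c$ and $d = d_1$: the complement event $A_{o_c, \dvec, y}^c$ is contained in the union of the two events appearing in \eqref{lb-1} and \eqref{lb-2}, each of which has $\P$-probability at most $C_1(\e) y^{-1} r^{-3}$. A union bound (absorbing the factor $2$ into $C_1(\e)$) gives the claim.

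For the sandwich \eqref{Aod1}, the products telescope via \eqref{2165} to $\prod_{i=m+1}^{n} J^u_i = Z_{u, n\evec_2}/Z_{u, m\evec_2}$ and similarly for the two stationary polymers. Hence \eqref{Aod1} is equivalent to the pair of cross inequalities
\begin{align*}
(1-y)\,Z_{u, m\evec_2}\,Z^{\rhoup{\rho}(o_c)}_{o_c, n\evec_2} &\le Z_{u, n\evec_2}\,Z^{\rhoup{\rho}(o_c)}_{o_c, m\evec_2}, \\
(1-y)\,Z_{u, n\evec_2}\,Z^{\rhodown{\rho}(o_c)}_{o_c, m\evec_2} &\le Z_{u, m\evec_2}\,Z^{\rhodown{\rho}(o_c)}_{o_c, n\evec_2}.
\end{align*}
I sketch the first (lower) inequality; the second (upper) is symmetric, with the roles of the two boundary axes of the stationary polymer exchanged. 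The event $A_{o_c, \dvec, y}$ yields $(1-y)\,Z^{\rhoup{\rho}(o_c)}_{o_c, n\evec_2} \le Z^{\rhoup{\rho}(o_c),\,\ex > d_1 N^{2/3}}_{o_c, n\evec_2}$, and trivially $Z^{\rhoup{\rho}(o_c),\,\ex > d_1 N^{2/3}}_{o_c, m\evec_2} \le Z^{\rhoup{\rho}(o_c)}_{o_c, m\evec_2}$, so it suffices to prove the deterministic inequality
\[ Z_{u, m\evec_2}\,Z^{\rhoup{\rho}(o_c),\,\ex > d_1 N^{2/3}}_{o_c, n\evec_2} \;\le\; Z_{u, n\evec_2}\,Z^{\rhoup{\rho}(o_c),\,\ex > d_1 N^{2/3}}_{o_c, m\evec_2}. \]

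This is established by a planar path-swap bijection of the type developed in Appendix \ref{app:genpol}. A pair $(\pi_1, \pi_2)$ contributing to the left-hand side has $\pi_1 : o_c \to n\evec_2$ traversing the horizontal boundary of the stationary polymer for more than $d_1 N^{2/3}$ steps (with $I^{\rhoup{\rho}(o_c)}$ weights on that segment) before turning into the bulk, paired with a bulk path $\pi_2 : u \to m\evec_2$. Since $o_c$ is the coordinatewise minimum of $\cI_{o, \dvec}$, we have $u \ge o_c$; the endpoint configuration (with $u_2 \ge o_{c,2}$ on the left and $m < n$ on the right) forces $\pi_1$ and $\pi_2$ to share a common vertex strictly after $\pi_1$ has entered the bulk. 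Swapping tails at the first such vertex produces $\pi_1' : o_c \to m\evec_2$ whose initial boundary segment coincides with that of $\pi_1$ (so $\pi_1'$ still satisfies $\ex > d_1 N^{2/3}$) and $\pi_2' : u \to n\evec_2$, with the full product of vertex weights preserved.

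The upper inequality is obtained analogously from the $\rhodown{\rho}(o_c)$ event, in which paths exit the vertical boundary past $u$. I expect the main obstacle there to be the more delicate geometry: both $\pi_1$ and $\pi_2$ may share vertices on the $y$-axis from $o_c$, so the swap must be performed at a bulk crossing strictly after both paths have left the axis, rather than at $u$ itself. Executing this step cleanly requires a careful choice of swap vertex and the coupling \eqref{2160} to reconcile the boundary weights $J^{\rhodown{\rho}(o_c)}$ with the bulk weights $Y$ along the exchanged path segments.
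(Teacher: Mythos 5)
Your handling of \eqref{Aod2} is correct and coincides with the paper's: pass to complements in Lemma \ref{lem-lb1} and apply a union bound.

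For \eqref{Aod1} your plan diverges from the paper's. The paper proves the ratio inequality term-by-term for each $i$, using the boundary-comparison inequality \eqref{m:770} and the restriction inequality \eqref{m:772} when $u=o_c+j\evec_2$, and reducing the case $u=o_c+k\evec_1$ to $u=o_c$ via \eqref{in2} before applying the same argument; only then does it telescope. You instead telescope first and aim to prove a single cross inequality of products by a planar path-swap bijection. The reduction to the two deterministic cross inequalities, with the factor $1-y$ pulled off via the event $A_{o_c,\dvec,y}$, is clean and equivalent to what the paper establishes, but your proposed bijection has a genuine gap.

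The gap is the claim that the endpoint configuration forces $\pi_1$ and $\pi_2$ to share a vertex strictly after $\pi_1$ has entered the bulk. This is true when $u=o_c+j\evec_2$ with $j\ge1$ (then $\pi_1$ runs along $o_c+\Z_{>0}\evec_1$ while $\pi_2$ starts above it on the vertical boundary, and since $\pi_1$ ends above $\pi_2$ they must cross at a vertex that is off both axes), but it fails when $u=o_c+k\evec_1$ with $k\ge1$, which is exactly the case the paper handles separately. There $\pi_1$, with $\ex(\pi_1)=\ell>d_1N^{2/3}>k$, runs along $o_c+\Z_{>0}\evec_1$ through $u$ and past it; $\pi_2$ starts at $u$ on the same ray. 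They necessarily share a stretch of the horizontal boundary, but if $\pi_2$ continues rightward past $o_c+\ell\evec_1$ while $\pi_1$ turns up, then from that level on $\pi_1$ lies strictly above-left of $\pi_2$, and since the terminal points satisfy $(0,n)\preccurlyeq(0,m)$ (as $n>m$), $\pi_1$ can remain strictly above-left all the way — no bulk crossing is forced. The only shared vertices then sit on $o_c+\Z_{>0}\evec_1$, where the two partition functions weight the vertex differently ($I^{\rhoup{\rho}}$ for the stationary polymer, $Y$ for $Z_{u,\cdot}$), so the swap does not preserve the product of weights; moreover a swap at a point short of $o_c+\ell\evec_1$ would destroy the constraint $\ex(\pi_1')>d_1N^{2/3}$. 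The paper's use of \eqref{in2} to replace $u$ by $o_c$ in this south-boundary case sidesteps precisely this obstruction. As written, your bijection argument needs that case split (or an equivalent repair) before it closes.
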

	\begin{proof}    Bound \eqref{Aod2} comes by switching to complements in  Lemma \ref{lem-lb1}. 
		We show the second inequality of \eqref{Aod1}.  The  first inequality follows similarly.  
Let $u\in \cI_{o,\dvec}$.   The first inequality in the calculation \eqref{Aod13} below is justified as follows in two cases.  Recall the notation \eqref{h:Z(A)} for restricted partition functions $Z_{o,p}(A)$. 

\medskip

(i)   Suppose $u=o_c+j\evec_2$ for some $0\le j\le d_1N^{2/3}$.  
 Apply \eqref{m:770} in the following  setting.  Take $Z^{(2)}_{u, \cbullet}$   to be $Z_{u, \cbullet}$.  Let  $Z^{(1)}_{u, \cbullet}$ use the same bulk weights $Y$. On the  boundary $Z^{(1)}_{u, \cbullet}$ takes 
	$Y^{(1)}_{u+\ell\evec_2}=J^{\rhodown{\rho}(o_c)}_{u+\ell\evec_2}$ on the $y$-axis, and  on the $x$-axis takes any  $Y^{(1)}_{u+m\evec_1}< Y_{u+m\evec_1}$ for $1\le m\le -u\cdot\evec_1$.    Then the second inequality of \eqref{m:770} followed by the second inequality of \eqref{m:772}  gives 
\[  	\frac{Z_{u,i\evec_2}}{Z_{u,(i-1)\evec_2}}    \leq \frac{Z^{(1)}_{u,i\evec_2}}{Z^{(1)}_{u,(i-1)\evec_2}}    \leq  \frac{Z^{(1)}_{u,i\evec_2}\bigl(\ex_{u,i\evec_2}< j-d_1N^{\frac{2}{3}}\bigr)}{Z^{(1)}_{u,(i-1)\evec_2}\bigl(\ex_{u, (i-1)\evec_2}< j-d_1N^{\frac{2}{3}}\bigr)}. 
\] 
Next observe  that the condition  
$\ex_{u,\brbullet}< j-d_1N^{\frac{2}{3}}<0$ renders the boundary weights on the $x$-axis  $u+(\Z_{>0})\evec_1$ irrelevant. Therefore  we can replace $Y^{(1)}_{u+m\evec_1}$  with the stationary boundary weights	 $I^{\rhodown{\rho}(o_c)}_{u+m\evec_1}$ without changing the restricted partition functions on the right-hand side.   This gives the first equality below: 
\begin{align*}
\frac{Z^{(1)}_{u,i\evec_2}\bigl(\ex_{u,i\evec_2}< j-d_1N^{\frac{2}{3}}\bigr)}{Z^{(1)}_{u,(i-1)\evec_2}\bigl(\ex_{u, (i-1)\evec_2}< j-d_1N^{\frac{2}{3}}\bigr)}
&=  \frac{Z^{\rhodown{\rho}(o_c)}_{u,i\evec_2}\bigl(\ex_{u,i\evec_2}< j-d_1N^{\frac{2}{3}}\bigr)}{Z^{\rhodown{\rho}(o_c)}_{u,(i-1)\evec_2}\bigl(\ex_{u,(i-1)\evec_2}< j-d_1N^{\frac{2}{3}}\bigr)}\\
&=
 \frac{Z^{\rhodown{\rho}(o_c)}_{o_c,i\evec_2}\bigl(\ex_{o_c,i\evec_2}<-d_1N^{\frac{2}{3}}\bigr)}{Z^{\rhodown{\rho}(o_c)}_{o_c,(i-1)\evec_2}\bigl(\ex_{o_c,(i-1)\evec_2}<-d_1N^{\frac{2}{3}}\bigr)}.  
\end{align*} 
The second equality comes by multiplying upstairs and downstairs with the boundary weights $J^{\rhodown{\rho}(o_c)}_{o_c+\ell\evec_2}$	for $1\le\ell\le j= (u-o_c)\cdot\evec_2$. 
	
%

\medskip 

(ii) On the other hand,  if $u=o_c+k\evec_1$ for some $0\le k\le d_1N^{2/3}$, then first by \eqref{in2} and then by applying the argument of the previous paragraph to $u=o_c$: 
\begin{align*}
\frac{Z_{u,i\evec_2}}{Z_{u,(i-1)\evec_2}}\leq 
 \frac{Z_{o_c,i\evec_2}}{Z_{o_c,(i-1)\evec_2}}
\le 
 \frac{Z^{\rhodown{\rho}(o_c)}_{o_c,i\evec_2}\bigl(\ex_{o_c,i\evec_2}<-d_1N^{\frac{2}{3}}\bigr)}{Z^{\rhodown{\rho}(o_c)}_{o_c,(i-1)\evec_2}\bigl(\ex_{o_c,(i-1)\evec_2}<-d_1N^{\frac{2}{3}}\bigr)}.  
\end{align*} 

\medskip 

Now for the derivation. 
	\be\label{Aod13}\begin{aligned}
			\prod_{i=m+1}^{n}J^u_i&=\prod_{i=m+1}^{n}\frac{Z_{u,i\evec_2}}{Z_{u,(i-1)\evec_2}}\leq \prod_{i=m+1}^{n}\frac{Z^{\rhodown{\rho}(o_c)}_{o_c,i\evec_2}\bigl(\ex_{o_c,i\evec_2}<-d_1N^{\frac{2}{3}}\bigr)}{Z^{\rhodown{\rho}(o_c)}_{o_c,(i-1)\evec_2}\bigl(\ex_{o_c,(i-1)\evec_2}<-d_1N^{\frac{2}{3}}\bigr)} \\
&=
 \prod_{i=m+1}^{n}
\frac{Q^{\rhodown{\rho}(o_c)}_{o_c,i\evec_2}\big(\ex_{o_c,i\evec_2}< -d_1N^{\frac{2}{3}}\big)}{Q^{\rhodown{\rho}(o_c)}_{o_c,(i-1)\evec_2}\big(\ex_{o_c,(i-1)\evec_2}<-d_1N^{\frac{2}{3}}\big)}
\cdot  \prod_{i=m+1}^{n}\frac{Z^{\rhodown{\rho}(o_c)}_{o_c,i\evec_2}}{Z^{\rhodown{\rho}(o_c)}_{o_c,(i-1)\evec_2}}\\
&=   \frac{Q^{\rhodown{\rho}(o_c)}_{o_c,n\evec_2}\big(\ex_{o_c,n\evec_2}< -d_1N^{\frac{2}{3}}\big)}{Q^{\rhodown{\rho}(o_c)}_{o_c,m\evec_2}\big(\ex_{o_c,m\evec_2}<-d_1N^{\frac{2}{3}}\big)}  \prod_{i=m+1}^{n}J^{\rhodown{\rho}(o_c)}_i  
\le \frac{1}{1-y}\prod_{i=m+1}^{n}J^{\rhodown{\rho}(o_c)}_i . 
		\end{aligned}\ee 
	\end{proof}

	Next we define the analogous construction reflected across the origin. Define east ($\cE$) and north ($\cN$) portions of the boundary by  $\partial^N_\mathcal{E} =\{N\}\times \lzb \e N,N \rzb$ and  $\partial^N_\mathcal{N}=\lzb \e N,N\rzb \times \{N\}$, and combine them into  $\rim{\partial}^N=\rim{\partial}^{N\!,\tspa\e}=\partial^N_\mathcal{E}\cup \partial^N_\mathcal{N}$. 
	  Each point $\rim{o}=(\rim{o}_1,\rim{o}_2)\in \rim{\partial}^N$ is associated with a parameter $\rho(\rim{o})\in (0,1)$ and a direction $ \xi(\rim{o})\in \,]\evec_2,\evec_1[\,$   through the relations in \eqref{rho-o} and \eqref{xi-o}. 
	For each point  $\rim{o}\in \rim{\partial}^N$ define the set 
	\begin{align*}
	\rim{\cI}_{\rim{o},\dvec}=\bigl\{v \in \rim{\partial}^N:\distance(v,\rim{o})\leq \tfrac12{d_2}N^\frac{2}{3}\bigr\} 
	\end{align*}
	and the maximal point  $\rim{o}_c\in 	\rim{\cI}_{\rim{o},\dvec}$
  in the coordinatewise partial order,  defined by the requirement that 
\[  \rim o_c\in\rim\cI_{\rim o,\dvec} \quad\text{and}\quad   v\le\rim o_c \ \ \forall v\in\rim \cI_{\rim o,\dvec}. \] 	
	
As previously for sets $\cI_{o,\dvec}$ on the southwest boundary, given now a northeast boundary point $\rim o\in\rim\partial^N$ 	we construct a family of coupled backward partition functions from $\rim{\cI}_{\rim{o},\dvec}$ to points  on the shifted $y$-axis $\evec_1+\Z\evec_2$.    From each 
 	  $v\in \rim{\cI}_{\rim{o},\dvec}$ we have  the backward bulk partition functions $\rim{Z}_{v,\bbullet}$  that use the i.i.d.\ Ga$^{-1}(1)$ weights $Y$.  From the base point $\rim o_c$ we define two stationary backward polymer processes $\rim{Z}^{\rhodown{\rho}(\rim{o}_c)}_{\rim{o}_c,\bbullet}$ and $\rim{Z}^{\rhoup{\rho}(\rim{o}_c)}_{\rim{o}_c, \bbullet}$ with parameters  $\rhodown{\rho}(\rim{o}_c)=\rho(\rim{o}_c)-rN^{-\frac{1}{3}}$ and $\rhoup{\rho}(\rim{o}_c)=\rho(\rim{o}_c)+rN^{-\frac{1}{3}}$.   Weights are coupled on the northeast boundary according to 
	  Theorem \ref{thm:st-lpp}:  for $k,\ell\ge 1$, 
\be\label{2168} 
Y_{\rim o_c-k\evec_1} \le I^{\rhodown{\rho}(\rim o_c)}_{\rim o_c-k\evec_1}\le I^{\rhoup{\rho}(\rim o_c)}_{\rim o_c-k\evec_1}
\quad\text{ and }\quad 
Y_{\rim o_c-\ell\evec_2} \le J^{\rhoup{\rho}(\rim o_c)}_{\rim o_c-\ell\evec_2}\le J^{\rhodown{\rho}(\rim o_c)}_{\rim o_c-\ell\evec_2}. 
\ee	
The boundary weights in \eqref{2160} and  in \eqref{2168} above are taken independent of each other.  

Ratio weights on the shifted $y$-axis are defined by
\be\label{2172} 
\rim{J}^{\tspb v}_i=\frac{\rim{Z}_{v,\evec_1+(i-1)\evec_2}}{\rim{Z}_{v,\evec_1+i\evec_2}}\,, 
\quad
 	\rim{J}^{\tspb\rhodown{\rho}(\rim{o}_c)}_i=\frac{\rim{Z}^{\rhodown{\rho}(\rim{o}_c)}_{\rim{o}_c,\,\evec_1+(i-1)\evec_2}}{\rim{Z}^{\rhodown{\rho}(\rim{o}_c)}_{\rim{o}_c,\,\evec_1+i\evec_2}}
\quad\text{and}\quad 	
	\rim{J}^{\tspb\rhoup{\rho}(\rim{o}_c)}_i=\frac{\rim{Z}^{\rhoup{\rho}(\rim{o}_c)}_{\rim{o}_c,\,\evec_1+(i-1)\evec_2}}{\rim{Z}^{\rhoup{\rho}(\rim{o}_c)}_{\rim{o}_c,\,\evec_1+i\evec_2}}. 
\ee
The collection of ration weights  in \eqref{2165} is independent of the collection   in \eqref{2172} above   because they are constructed from independent inputs.  
 
	
	 We have this analogue of  Lemma \ref{lem ge1}.    $\rim{\cJ}=\evec_1+\cJ= \lzb \evec_1-N^\frac{2}{3}\evec_2\,, \,\evec_1+N^\frac{2}{3}\evec_2 \rzb$ is the shift of the interval $\cJ$ in \eqref{Aod}.  
	\begin{lemma}\label{lem ge2} 
		For $0<y<1$, define  the event
		\be\label{Bod} 
		B_{\rim{o}_c,\dvec,y}
		 = \left\{\,\inf_{x\in \wh{\cJ}}\rim{Q}^{\rhodown{\rho}(\rim{o}_c)}_{\rim{o}_c,x}\big(\rim{\ex}_{\rim{o}_c,x}<-d_2N^{\frac{2}{3}}\big)\ge 1- y , \; \inf_{x\in\wh{\cJ}}\rim{Q}^{\rhoup{\rho}(\rim{o}_c)}_{\rim{o}_c,x}\big(\hat{\ex}_{\rim{o}_c,x}> d_2N^{\frac{2}{3}}\big)\ge 1- y \right\}. 
		\ee 
		Under the assumptions of Lemma \ref{lem-lb1} for $d=d_2$  we have the bound 
		\be\label{Bod2}   \P\bigl(B_{\rim{o}_c,\dvec,y}\bigr) \ge 1-C_1(\e)y^{-1} r^{-3}. \ee 
		On the event $B_{\rim{o}_c,\dvec,y}$, for any $m<n$ in $\lzb-N^{2/3}, N^{2/3}\rzb$   we have the inequalities 
		\be\label{Bod1} 
		(1-y)\prod_{i=m+1}^{n}\rim{J}^{\tspb\rhoup{\rho}(\rim{o}_c)}_i\leq \prod_{i=m+1}^{n}\rim{J}^{\tspb v}_i \leq \frac{1}{1-y}\prod_{i=m+1}^{n}\rim{J}^{\tspb\rhodown{\rho}(\rim{o}_c)}_i\quad \forall v\in \rim{\cI}_{\rim{o}_c,\dvec}.
		\ee 
	\end{lemma}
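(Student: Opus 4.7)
The plan is to reduce Lemma \ref{lem ge2} to Lemma \ref{lem ge1} by exploiting the reflection symmetry of the model across the origin, and then to repeat, in the backward setting, the telescoping argument already used in the proof of Lemma \ref{lem ge1}. For the probability bound \eqref{Bod2}, I would observe that the backward polymer started at $\rim o_c\in\rim\partial^N$ and ending on $\rim\cJ$ has, after reflection $x\mapsto -x$ composed with a shift, exactly the same joint distribution as the forward polymer appearing in Lemma \ref{lem-lb1} with $d=d_2$. Applying \eqref{lb-1} and \eqref{lb-2} to that reflected configuration and taking complements yields $\P(B_{\rim o_c,\dvec,y})\ge 1-C_1(\e)y^{-1}r^{-3}$, which is \eqref{Bod2}.

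For the sandwich inequality \eqref{Bod1}, I would fix $v\in\rim\cI_{\rim o,\dvec}$ and imitate the two-case analysis in the proof of Lemma \ref{lem ge1}, but now in the backward direction using the couplings in \eqref{2168}. If $v=\rim o_c-j\evec_2$ for some $0\le j\le d_2 N^{2/3}$, I would introduce an intermediate backward partition function $\rim Z^{(1)}_{v,\cbullet}$ which agrees with $\rim Z_{v,\cbullet}$ in the bulk but uses the stationary boundary weights $J^{\rhodown\rho(\rim o_c)}_{v-\ell\evec_2}$ on the north axis emanating from $v$, together with smaller-than-bulk weights on the east axis. The backward counterparts of the monotonicity estimates \eqref{m:770} and \eqref{m:772} then give
\[
\rim J^v_i \;\le\; \frac{\rim Z^{(1)}_{v,\tspa\evec_1+(i-1)\evec_2}\bigl(\rim\ex_{v,\tspa\evec_1+(i-1)\evec_2}<j-d_2 N^{2/3}\bigr)}{\rim Z^{(1)}_{v,\tspa\evec_1+i\evec_2}\bigl(\rim\ex_{v,\tspa\evec_1+i\evec_2}<j-d_2 N^{2/3}\bigr)},
\]
and the constraint $\rim\ex_{v,\cbullet}<j-d_2N^{2/3}<0$ makes the east-boundary weights irrelevant, so they may be replaced by the stationary $I^{\rhodown\rho(\rim o_c)}$-weights and the base point may then be shifted from $v$ to $\rim o_c$ via translation in the $\evec_2$ direction. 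In the other case $v=\rim o_c-k\evec_1$, an application of the backward analogue of \eqref{in2} first reduces to $v=\rim o_c$, and then the previous argument applies.

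At this point I would take the product over $i=m+1,\ldots,n$, which telescopes as in \eqref{Aod13}:
\[
\prod_{i=m+1}^{n}\rim J^v_i \;\le\; \frac{\rim Q^{\rhodown\rho(\rim o_c)}_{\rim o_c,\tsp\evec_1+m\evec_2}\bigl(\rim\ex_{\rim o_c,\tsp\evec_1+m\evec_2}<-d_2 N^{2/3}\bigr)}{\rim Q^{\rhodown\rho(\rim o_c)}_{\rim o_c,\tsp\evec_1+n\evec_2}\bigl(\rim\ex_{\rim o_c,\tsp\evec_1+n\evec_2}<-d_2 N^{2/3}\bigr)}\;\prod_{i=m+1}^{n}\rim J^{\tspb\rhodown\rho(\rim o_c)}_i.
\]
On the event $B_{\rim o_c,\dvec,y}$ the denominator is at least $1-y$ and the numerator is at most $1$, which yields the upper bound in \eqref{Bod1}. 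The lower bound is entirely symmetric: one uses the companion inequalities in \eqref{2168} together with the bound on $\rim Q^{\rhoup\rho(\rim o_c)}_{\rim o_c,\cbullet}\bigl(\rim\ex>d_2 N^{2/3}\bigr)$ built into $B_{\rim o_c,\dvec,y}$. The main bookkeeping obstacle, and the only place where care is needed, is that the ratios $\rim J^{\tspb v}_i$ in \eqref{2172} are defined with the smaller-index partition function in the numerator, which reverses the roles of $\rhodown\rho$ and $\rhoup\rho$ relative to Lemma \ref{lem ge1}; tracking this inversion correctly through the coupling \eqref{2168} is what guarantees that the upper bound picks up the $\rhodown\rho(\rim o_c)$-process and the lower bound the $\rhoup\rho(\rim o_c)$-process, as stated.
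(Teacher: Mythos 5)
Your plan is essentially correct and matches the paper's intent: the paper omits the proof of Lemma~\ref{lem ge2}, declaring it ``this analogue of Lemma~\ref{lem ge1},'' and the reflection-plus-telescoping argument you outline is the natural way to make that precise. Both your reduction of \eqref{Bod2} to Lemma~\ref{lem-lb1} via $x\mapsto -x$ and your backward adaptation of the two-case comparison argument through the coupling \eqref{2168} are sound in structure.

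One caution, however. Your closing remark that the inverted orientation of $\rim{J}^{\tspb v}_i$ ``reverses the roles of $\rhodown{\rho}$ and $\rhoup{\rho}$ relative to Lemma~\ref{lem ge1}'' is misleading, even though your final conclusion is right. Comparing \eqref{Bod1} with \eqref{Aod1}, the arrangement is in fact \emph{identical}: the upper bound is controlled by the $\rhodown{\rho}$-process and the lower bound by the $\rhoup{\rho}$-process in both lemmas. What actually happens is that two inversions cancel: the reflection reverses the path orientation, and the definition in \eqref{2172} (smaller index in the numerator) reverses the ratio orientation, so the parameter roles end up unchanged. As written, your sentence asserts a role-swap that does not occur, and a reader would be right to object. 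A second, minor, point: the reflection sends $\rim{o}_c\in\rim{\partial}^N$ to $-\rim{o}_c\in\partial^N$ but sends $\rim{\cJ}=\evec_1+\cJ$ to $-\evec_1+\cJ$ rather than to $\cJ$; applying Lemma~\ref{lem-lb1} therefore requires absorbing an $O(1)$ shift of the base point or the target interval. This is harmless at scale $N$, but it should be stated rather than elided.
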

	
\smallskip 	
	
Now we use partition functions from the southwest and northeast together. 		Let $o\in \partial^N,\rim{o}\in\rim{\partial}^N$ and consider the polymers  from points $u\in \cI_{o,\dvec}$ to the interval $\cJ$ on the $y$-axis and reverse polymers from   points $v\in \rim{\cI}_{\rim{o},\dvec}$ to the shifted interval $\wh{\cJ}=\evec_1+\cJ$. Abbreviate the parameters for the base points as 
\be\label{2186}   \rhoup{\rho}=\rhoup{\rho}(o_c), \quad \rhodown{\rho}=\rhodown{\rho}(o_c), \quad 
 \rhoup{\lambda}=\rhoup{\rho}(\rim{o}_c), \quad\text{and}\quad  \rhodown{\lambda}=\rhodown{\rho}(\rim{o}_c).  \ee
	For $i\in\lzb-N^{2/3}, N^{2/3}\rzb $, take the $Z$-ratios from \eqref{2165} and  \eqref{2172} and define 
\be\label{2188}
	X^{u,v}_i=\frac{J^u_i}{\rim{J}^{\tspb v}_i}, \quad 
	Y'_i=\frac{J^{\rhodown{\rho}}_i}{\rim{J}^{\tspb\rhoup{\lambda}}_i}
	\quad\text{and}\quad 
	Y_i=\frac{J^{\rhoup{\rho}}_i}{\rim{J}^{\tspb\rhodown{\lambda}}_i}.
\ee
		A two-sided multiplicative walk $\mwalk(X)$ with steps $\{X_j\}$ is defined  by 
	\begin{align}\label{mw}
	\mwalk_n(X)=
	\begin{cases} 
	\prod_{j=1}^{n}X_j & n \geq 1 \\
	1 & n=0\\
	\prod_{j=n+1}^{0}X^{-1}_j & n \le-1. 
	\end{cases}
	\end{align}
	The ratios from \eqref{2188} above  define the walks 
\be\label{2192}
	\mwalk^{u,v}=\mwalk(X^{u,v})\,,\quad 
	\mwalk'=\mwalk(Y{'}) \quad\text{and}\quad 
	\mwalk=\mwalk(Y). 
	\ee 
	Specialize the parameter $y$ in the events   in \eqref{Aod} and \eqref{Bod} to set 
	\begin{align*}
		A_{o,\dvec}=A_{o,\dvec,\frac{\sqrt{2}-1}{\sqrt{2}}} 
		\qquad\text{and}\qquad 
		B_{\rim{o},\dvec}=B_{\rim{o},\dvec,\frac{\sqrt{2}-1}{\sqrt{2}}}.
	\end{align*}
	
	\begin{lemma}\label{lm:sw}
		The  processes 
		\be \label{cor b-rw1}
		\{\mwalk'_m :  m\in \lzb-N^{2/3},0\rzb\,\}  \quad\text{and}\quad 
		\{\mwalk_n: n\in \lzb0,N^{2/3} \rzb\,\} 
		\quad\text{are independent.} 
		\ee 
		On the event $A_{o,\dvec}\cap B_{\rim{o},\dvec}$, for all $u\in \cI_{o,\dvec}$ and $v\in \rim{\cI}_{\rim{o},\dvec}$,  
		\be\label{sw5} \begin{aligned}
		\tfrac12 \mwalk'_n &\leq \mwalk_n^{u,v}\leq 2\mwalk_n \quad\text{for } \   n\in \lzb-N^\frac{2}{3},-1\rzb\\ 
				\text{and}\quad 	\tfrac12 \mwalk_n &\leq \mwalk_n^{u,v}\leq 2\mwalk'_n \quad\text{for } \  n\in \lzb1,N^\frac{2}{3}\rzb. 
		\end{aligned}\ee
	\end{lemma}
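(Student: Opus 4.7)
The two bounds \eqref{sw5} are obtained by algebra from Lemmas~\ref{lem ge1} and \ref{lem ge2}.  The specific choice $y = (\sqrt{2}-1)/\sqrt{2}$ makes $(1-y)^2 = 1/2$.  For $n \in \lzb 1, N^{2/3}\rzb$, $u \in \cI_{o,\dvec}$, and $v \in \rim\cI_{\rim o,\dvec}$, write
\[
M^{u,v}_n = \prod_{i=1}^n \frac{J^u_i}{\rim J^{\tsp v}_i}.
\]
Combining the upper bound \eqref{Aod1} (with $m=0$) for $\prod_{i=1}^n J^u_i$ with the reciprocal of the lower bound \eqref{Bod1} for $\prod_{i=1}^n \rim J^{\tsp v}_i$ yields $M^{u,v}_n \le (1-y)^{-2} \prod_{i=1}^n Y'_i = 2 M'_n$, and reversing the pairing gives $M^{u,v}_n \ge (1-y)^2 M_n = \tfrac12 M_n$.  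For $n \in \lzb -N^{2/3}, -1\rzb$, the same reasoning applied to
\[
M^{u,v}_n = \prod_{j=n+1}^0 \frac{\rim J^{\tsp v}_j}{J^u_j}
\]
interchanges the roles of $M$ and $M'$ and produces $\tfrac12 M'_n \le M^{u,v}_n \le 2 M_n$.

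For the independence claim \eqref{cor b-rw1}, observe that the walk $M'$ on $\lzb -N^{2/3},0\rzb$ is a functional only of the pairs $(J^{\rhodown{\rho}}_i, \rim J^{\tsp\rhoup{\lambda}}_i)$ for $i \le 0$, whereas $M$ on $\lzb 0,N^{2/3}\rzb$ is a functional only of $(J^{\rhoup{\rho}}_j, \rim J^{\tsp\rhodown{\lambda}}_j)$ for $j \ge 1$.  The forward stationary processes at $o_c$ and the backward ones at $\rim o_c$ are built from the independent boundary weights of \eqref{2160} and \eqref{2168} together with bulk weights in the disjoint half-planes $\{x\cdot\evec_1 \le 0\}$ and $\{x\cdot\evec_1 \ge 1\}$ respectively, so the forward and backward families are independent of one another.

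The main obstacle is then the internal independence of $(J^{\rhodown{\rho}}_i)_{i \le 0}$ from $(J^{\rhoup{\rho}}_j)_{j \ge 1}$, since these two sequences share the bulk weights in the strip between $o_c$ and the $x$-axis and are constructed from a coupled family of boundary weights.  This is resolved by the Burke-type property of the coupled stationary inverse-gamma polymer stated in Theorem~\ref{thm:st-lpp}: along the $y$-axis the pairs $(J^{\rhodown{\rho}}_i, J^{\rhoup{\rho}}_i)$ form an i.i.d.\ sequence in $i$, so slicing at the origin gives $(J^{\rhodown{\rho}}_i)_{i\le 0}$ independent of $(J^{\rhoup{\rho}}_j)_{j\ge 1}$.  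Applying the analogous statement to the backward processes and combining with the forward-backward independence yields \eqref{cor b-rw1}.
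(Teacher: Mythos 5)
Your algebraic derivation of the bounds \eqref{sw5} is correct and is essentially the same as the paper's: the choice $y=(\sqrt 2-1)/\sqrt 2$ gives $(1-y)^2=1/2$, and multiplying the two-sided estimates \eqref{Aod1} and \eqref{Bod1} with $m=0$ (for $n\ge 1$) or $n=0$ (for the negative index) produces exactly the claimed inequalities. Your identification of the forward/backward independence from the disjoint bulk regions and independently drawn boundary weights is also the paper's mechanism.

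There is, however, a genuine error in how you justify the internal independence. You write that Theorem~\ref{thm:st-lpp} says ``the pairs $(J^{\rhodown{\rho}}_i, J^{\rhoup{\rho}}_i)$ form an i.i.d.\ sequence in $i$,'' and then slice at the origin. This is not what the theorem says, and it is false. The joint two-parameter ratio process is translation-\emph{stationary} (\eqref{Zr17}) but not i.i.d.\ across $i$: only each single marginal sequence $\{J^\alpha_{v+j\evec_2}\}_{j\ge 1}$ is i.i.d.\ (Theorem~\ref{thm:st-lpp}(ii)), while the coupled pair $(J^{\rhodown{\rho}}_i, J^{\rhoup{\rho}}_i)$ has nontrivial correlations in $i$ coming from the construction $\Jvec^{\rhodown\rho}=\Dop(\Yvec^{\rhodown\rho},\Yvec^{\rhoup\rho})$. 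What the theorem actually gives (part~(i), applied with $(\lambda,\rho,\sigma)=(\rhodown{\rho},\rhoup{\rho},1)$) is the one-sided, \emph{directional} independence: the lower-parameter ratios $\{J^{\rhodown{\rho}}_i\}_{i\le 0}$ are independent of the higher-parameter ratios $\{J^{\rhoup{\rho}}_j\}_{j\ge 1}$. The other pairing, $\{J^{\rhoup{\rho}}_i\}_{i\le 0}$ versus $\{J^{\rhodown{\rho}}_j\}_{j\ge 1}$, is \emph{not} claimed to be independent, which is exactly why an ``i.i.d.\ pairs'' argument cannot be the right mechanism. Your conclusion happens to be correct because it coincides with the direction the theorem provides, but the reasoning as written would also ``prove'' the other (false) independence. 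The same care is needed on the backward side: after reflection of Theorem~\ref{thm:st-lpp} about $\rim o_c$, the directions reverse, so the correct statement is that $\{\rim J^{\tspb\rhodown{\lambda}}_i\}_{i\ge 1}$ is independent of $\{\rim J^{\tspb\rhoup{\lambda}}_i\}_{i\le 0}$. To repair the proof, replace the i.i.d.\ claim with a direct citation of the one-sided independence in Theorem~\ref{thm:st-lpp}(i), once for the forward family and once (with indices swapped) for the reflected backward family, and then combine with the forward/backward independence you already argued.
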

	
	\begin{proof} 	To prove the  independence claim  \eqref{cor b-rw1}, observe first from the construction itself that 
the collection $\{J^{\rhodown{\rho}}_i,  J^{\rhoup{\rho}}_i\}_{i\tspa\in\tspa\lzb -N^{2/3}, N^{2/3}\rzb}$ is independent of 	 the collection $\{\rim J^{\tspb\rhodown{\lambda}}_i,  \rim J^{\tspb\rhoup{\lambda}}_i\}_{i\tspa\in\tspa\lzb -N^{2/3}, N^{2/3}\rzb}$, as pointed out below \eqref{2172}.   Then within these collections, Theorem \ref{thm:st-lpp}(i)  implies the independence of 
$\{J^{\rhodown{\rho}}_i\}_{i\le0}$ and  $\{J^{\rhoup{\rho}}_i\}_{i\ge1}$, and the independence of 
$\{\rim J^{\tspb\rhodown{\lambda}}_i\}_{i\ge 1}$ and  $\{\rim J^{\tspb\rhoup{\lambda}}_i\}_{i\le0}$.   With boundary weights on the southwest, the independence of  $\{J^{\rhodown{\rho}}_i\}_{i\le0}$ and  $\{J^{\rhoup{\rho}}_i\}_{i\ge1}$ is a direct application of Theorem \ref{thm:st-lpp}(i) with the choice $(\lambda, \rho, \sigma)=(\rhodown{\rho}, \rhoup{\rho}, 1)$.    After reflection of the entire setting of Theorem \ref{thm:st-lpp} across its base point $u$, the boundary weights reside on the northwest, as required for $\{\rim J^{\tspb\rhodown{\lambda}}_i\}_{i\ge 1}$ and  $\{\rim J^{\tspb\rhoup{\lambda}}_i\}_{i\le0}$, and  the  direction $\evec_2$ has been reversed   to $-\evec_2$.  Hence the inequalities $i\le0$ and $i\ge1$ in the independence statement must be switched around. 

To summarize, the collections $\{J^{\rhodown{\rho}}_i, \rim J^{\tspb\rhoup{\lambda}}_i\}_{i\le0}$  and $\{J^{\rhoup{\rho}}_i, \rim J^{\tspb\rhodown{\lambda}}_i\}_{i\ge1}$ are independent of each other, which implies the independence of  $\{Y'_i\}_{i\le0}$  from $\{Y_i\}_{i\ge1}$.  
	
\medskip

We  show the case 	$n\in \lzb 1, N^{2/3}\rzb$ of  \eqref{sw5}. 
		\begin{align*}
			\mwalk_n^{u,v}=\prod_{i=1}^{n}X^{u,v}_i=\prod_{i=1}^{n}J^u_i\cdot \prod_{i=1}^{n}(\rim{J}^{\tspb v}_i)^{-1}
	\begin{cases} 		
			\leq \sqrt{2}\prod_{i=1}^{n}J^{\rhodown{\rho}}_i \cdot \sqrt{2}\prod_{i=1}^{n}(\rim{J}^{\tspb\rhoup{\lambda}}_i)^{-1}=2\prod_{i=1}^{n} Y'_i = 2\mwalk'_n\,;   \\[5pt] 
			\geq \frac 1{\sqrt{2}}\prod_{i=1}^{n}J^{\rhoup{\rho}}_i \cdot \frac 1{\sqrt{2}}\prod_{i=1}^{n}(\rim{J}^{\tspb\rhodown{\lambda}}_i)^{-1}=\frac12\prod_{i=1}^{n} Y_i = \frac12 \mwalk_n. 
			\end{cases} 
		\end{align*}
An analogous argument gives the case $n\in \lzb -N^{2/3}, -1\rzb$. 		
	\end{proof}

	Each path that crosses the $y$-axis leaves the axis along a unique  edge $\ed_i=(i\evec_2,i\evec_2+\evec_1)$.  
Decompose the set of paths between $u\in\partial^N$ and $v\in\rim\partial^N$ according to the edge taken:  
\[  \pathsp_{u,v}=\bigcup_{i\in \Z} \pathsp^i_{u,v}   
  \]
where the sets 
	\begin{align}\label{tz}
		\pathsp^i_{u,v}=\{\pi\in\pathsp_{u,v}:\ed_i\in\pi\}    
	\end{align}
  satisfy $\pathsp^i_{u,v}\cap \pathsp^j_{u,v}=\emptyset $ for $i\neq j$. 
%
Let
	\begin{align}\label{pi}
		p^{u,v}_i=Q_{u,v}(\pathsp^i_{u,v})=\frac{Z_{u,i\evec_2}Z_{i\evec_2+\evec_1,v}}{Z_{u,v}}
	\end{align}
	be the quenched probability of paths going through the edge $\ed_i$. 
For all $n\in\lzb-N^{2/3}, N^{2/3}\rzb$   we claim that 
	\be\label{pi2}
		p_0^{u,v}\leq (\mwalk^{u,v}_n)^{-1}.
	\ee	
Here is the verification for 	$n\ge1$: 
	\be\begin{aligned}\nn
		p^{u,v}_0&\leq \frac{p^{u,v}_0}{p^{u,v}_n}  
		=\frac{Z_{u,\zevec}Z_{\evec_1,v}}{Z_{u,n\evec_2}Z_{n\evec_2+\evec_1,v}}
		= \prod_{i=1}^n\frac{Z_{u,(i-1)\evec_2}Z_{(i-1)\evec_2+\evec_1,v}}{Z_{u,i\evec_2}Z_{i\evec_2+\evec_1,v}}  
		= \prod_{i=1}^n\frac{\rim{J}^{\tspb v}_i}{J^u_i}= \prod_{i=1}^n (X^{u,v}_i)^{-1}=(\mwalk^{u,v}_n)^{-1}.  
	\end{aligned}\ee
The case 	$n\le-1$ goes similarly. 
	
	

We come to the key estimates.  The first one controls the quenched probability of paths between   $\cI_{o,\dvec}$ and  $\rim{\cI}_{\rim{o},\dvec}$ that go through the edge $\ed_0$ from $\zevec$ to $\evec_1$.  

	\begin{lemma}\label{lm:close}
		Let $r=N^{\frac2{15}}$ and $\dvec=(d_1,d_2)=(1,N^\frac18)$.   There exist finite positive constants  $C(\e)$ and $N_0(\e)$ such that,  for all $N\ge N_0(\e)$ and  $o\in \partial^N$ with $\rim o=-o$,   
		\begin{align*}
			\P\Big(\sup_{u\,\in\,\cI_{o,\dvec},\,v\,\in\,\rim{\cI}_{\rim{o},\dvec}}p^{u,v}_0>N^{-1}\Big)\leq C(\e) (\log N)^6N^{-2/5}. 
		\end{align*}
	\end{lemma}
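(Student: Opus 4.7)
The plan is to split the event $\{\sup_{u,v}p_0^{u,v} > N^{-1}\}$ into a ``bad boundary event'' and a ``small walk event,'' and control each separately. With the specified $r = N^{2/15}$ and $y = \tfrac{\sqrt{2}-1}{\sqrt{2}}$, the hypotheses $C_0(\e)d_k \le r \le c(\e)N^{1/3}$ of Lemma~\ref{lem-lb1} are satisfied for all large $N$, since $d_1 = 1$ and $d_2 = N^{1/8} = o(N^{2/15})$. Thus Lemmas~\ref{lem ge1} and \ref{lem ge2} give
$$\P\bigl((A_{o,\dvec} \cap B_{\rim{o},\dvec})^c\bigr) \le C(\e) r^{-3} = C(\e) N^{-2/5}.$$
On $A_{o,\dvec} \cap B_{\rim{o},\dvec}$, \eqref{pi2} combined with the left half of \eqref{sw5} gives, for every $n \in \lzb 1, N^{2/3}\rzb$ and every $u\in\cI_{o,\dvec}$, $v\in\rim\cI_{\rim o,\dvec}$,
$$p_0^{u,v} \le (\mwalk^{u,v}_n)^{-1} \le 2/\mwalk_n,$$
so that $\{\sup_{u,v} p_0^{u,v} > N^{-1}\} \cap A_{o,\dvec} \cap B_{\rim{o},\dvec} \subseteq \{\max_{1 \le n \le N^{2/3}} \mwalk_n < 2N\}$.

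Next I would check that $\log \mwalk_n = \sum_{i=1}^n \log Y_i$ is a random walk with i.i.d.\ steps: the Burke-type stationarity in Theorem~\ref{thm:st-lpp}, together with the independence of the southwest and northeast input families, makes $\{J^{\rhoup{\rho}}_i\}_{i \ge 1}$ and $\{\rim J^{\tspb\rhodown{\lambda}}_i\}_{i \ge 1}$ two independent i.i.d.\ inverse-gamma sequences. By \eqref{invga7} each step has
$$\mu := \psi_0(\rhodown\lambda) - \psi_0(\rhoup\rho), \qquad \sigma^2 := \psi_1(\rhoup\rho) + \psi_1(\rhodown\lambda).$$
The assumption $\rim o = -o$ gives $\rho(o) = \rho(\rim o)$, and since $o_c,\rim o_c$ are within $O(N^{19/24})$ of $o,\rim o$ on the boundary, $\rho(o_c) - \rho(\rim o_c) = O(N^{-5/24})$. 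Combined with $\rhoup\rho - \rhodown\lambda = 2rN^{-1/3} + O(N^{-5/24}) = \Theta(N^{-1/5})$ and \eqref{2095} confining the parameters to a fixed compact subset of $(0,1)$, the mean-value theorem applied to $\psi_0$ gives $\mu = -\Theta(N^{-1/5})$ and $\sigma^2 = \Theta(1)$.

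Finally I would invoke the positive lower bound on the running maximum of a random walk with small negative drift supplied in Appendix~\ref{sec:rw} (quoted from \cite{busa-sepp-rw}), applied to $\log\mwalk_n$ with drift $-\Theta(N^{-1/5})$, bounded variance, time horizon $N^{2/3}$, and target level $\log(2N)$. This should yield
$$\P\Bigl(\max_{1 \le n \le N^{2/3}} \mwalk_n < 2N\Bigr) \le C(\e) (\log N)^6 N^{-2/5},$$
and combining with the boundary estimate by a union bound finishes the proof.

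The main obstacle is this last step. Heuristically the overall running maximum of $\log\mwalk_n$ is of order $\sigma^2/|\mu| = \Theta(N^{1/5}) \gg \log N$, so exceeding the modest target $\log(2N)$ is overwhelmingly likely; but extracting the explicit polylogarithmic factor $(\log N)^6$ that matches the claim is exactly what the standalone random-walk analysis \cite{busa-sepp-rw} is designed to provide. The remainder of the argument is bookkeeping that combines this with the already-established boundary estimate.
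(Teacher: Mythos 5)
Your decomposition into a ``bad boundary event'' $(A_{o,\dvec}\cap B_{\rim o,\dvec})^c$ and a ``small walk event'' is right, and the estimate $\P((A\cap B)^c)\le Cr^{-3}=CN^{-2/5}$ is exactly what is needed. But the walk estimate has a genuine gap: you only extract the containment $\{\sup p_0^{u,v}>N^{-1}\}\cap A\cap B\subseteq\{\max_{1\le n\le N^{2/3}}\mwalk_n<2N\}$ from the positive side of \eqref{sw5}, and a single one-sided random walk cannot deliver the claimed rate. Applying Theorem~\ref{thm:lm2} to $\swalk_n=\log\mwalk_n$ with time horizon $N^{2/3}$, target $x=\log(2N)$, and drift $\mu=\Theta(N^{-1/5})$ gives only
\[
\P\Bigl(\max_{1\le n\le N^{2/3}}\mwalk_n<2N\Bigr)\le C\tspa x\tsp(\log N^{2/3})(\mu\vee N^{-1/3})\;=\;O\bigl((\log N)^2 N^{-1/5}\bigr),
\]
which is of a strictly \emph{larger} order than $(\log N)^6 N^{-2/5}$. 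Heuristically this is sharp: for a walk with drift $-\Theta(N^{-1/5})$ and order-one variance, the all-time running max is roughly exponential with mean $\Theta(N^{1/5})$, so the probability it never exceeds $\log(2N)$ is $\approx N^{-1/5}\log N$, not $N^{-2/5}$. No amount of careful bookkeeping in the random-walk lemma will close this gap --- the exponent is wrong.

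The missing idea is to use \emph{both} halves of the two-sided walk $\mwalk^{u,v}_n$, not just $n\ge1$. On $A\cap B$, \eqref{pi2} and \eqref{sw5} force simultaneously $\max_{1\le n\le N^{2/3}}\mwalk_n<2N$ \emph{and} $\max_{-N^{2/3}\le n\le -1}\mwalk'_n<2N$. Lemma~\ref{lm:sw}, property \eqref{cor b-rw1}, says the processes $\{\mwalk'_m:m\le0\}$ and $\{\mwalk_n:n\ge0\}$ are independent --- this is the whole point of the two-parameter jointly stationary coupling of Theorem~\ref{thm:st-lpp} --- so the probability of the intersection factors into the \emph{product} of two one-sided probabilities, each of order $(\log N)^3 N^{-1/5}$ by Theorem~\ref{thm:lm2} (applied with $x=(\log N)^2$ to keep the constants uniform over $o\in\partial^N$). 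The product gives $(\log N)^6 N^{-2/5}$, matching $r^{-3}$. Without this squaring via independence, the estimate does not reach $N^{-2/5}$.
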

\begin{proof}
	 For any $u\in\cI_{o,\dvec}$ and $v\in\rim{\cI}_{\rim{o},\dvec}$, by  \eqref{pi2} and  \eqref{sw5}, 
	\be\begin{aligned}\label{ine}
		&\{p^{u,v}_0>N^{-1}\}\cap \{A_{o,\dvec}\cap B_{\rim{o},\dvec}\}
		\subseteq \{\max_{n\tspa\in\tspa\lzb-N^{2/3}, N^{2/3}\rzb}\mwalk^{u,v}_n<N\}\cap (A_{o,\dvec}\cap B_{\rim{o},\dvec}) \\
		&\qquad\qquad\qquad
		\subseteq\bigl\{ \max_{-N^{2/3}\leq n\leq-1 }\mwalk'_n< 2N,\, \max_{1\leq n\leq N^{2/3}}\mwalk_n<2N\bigr\}\cap (A_{o,\dvec}\cap B_{\rim{o},\dvec}).
	\end{aligned}\ee
By the independence in \eqref{cor b-rw1}, 
	\be\label{2205} \begin{aligned}
		\P\bigl(\max_{u\,\in\,\cI_{o,\dvec},\,v\,\in\,\rim{\cI}_{\rim{o},\dvec}}p^{u,v}_0>N^{-1}\bigr)
		&\leq \P\bigl(\,\max_{-N^{2/3}\leq n\leq-1 }\mwalk'_n< 2N\bigr)\, \P\bigl(\,\max_{1\leq n\leq N^{2/3}}\mwalk_n<2N\bigr)\\
		&\qquad\qquad
		+2\tspa\P(A_{o,\dvec}^c\cup B_{\rim{o},\dvec}^c).
	\end{aligned}\ee
	
To apply the random walk bound from Appendix \ref{sec:rw}, we convert the multiplicative walks into additive walks. For given steps   $\xi=\{\xi_i\}$  define the two-sided walk $S(\xi)$ by 
	 \begin{align*}
	 \swalk_n(\xi)=
	 \begin{cases} 
	 \sum_{i=1}^{n}\xi_i & n \geq 1 \\
	 0 & n=0\\
	 -\sum_{i=n+1}^{0}\xi_i & n < 0. 
	 \end{cases}
	 \end{align*}
	 Recall the parameters defined in \eqref{2186}. 
With reference to \eqref{2188} and \eqref{2192}, define the additive walks 	 	\begin{align*}
		\swalk_n&=\log\mwalk_n  
		\qquad\text{with steps }
		 \   \xi_i=\log{J^{\rhoup{\rho}}_i}-\log {\rim{J}^{\tspb\rhodown{\lambda}}_i} \,, \\
		\swalk'_n&=\log\mwalk'_n  \qquad\text{with steps }
		 \   \xi'_i=\log{J^{\rhodown{\rho}}_i}-\log {\rim{J}^{\tspb\rhoup{\lambda}}_i}. 
	\end{align*}
With the bounds \eqref{Aod2} and  \eqref{Bod2}, \eqref{2205} becomes 
	\be\label{2208} \begin{aligned}
		\P\bigl(\max_{u\,\in\,\cI_{o,\dvec},\,v\,\in\,\rim{\cI}_{\rim{o},\dvec}}p^{u,v}_0>N^{-1}\bigr)
		&\leq \P\bigl(\,\max_{-N^{2/3}\leq n\leq-1 }\swalk'_n< \log(2N)\bigr)\, \P\bigl(\,\max_{1\leq n\leq N^{2/3}}\swalk_n<\log(2N)\bigr)\\
		&+ Cr^{-3}.
	\end{aligned}\ee

We  use Theorem \ref{thm:lm2} to bound $\P(\,\max_{1\leq n\leq N^{2/3}}\swalk_n<\log(2N))$.  
Since 
\[
\rhoup{\rho}= \rho(o_c)+rN^{-1/3}= \rho(o_c)+N^{-1/5}
\quad\text{and}\quad 
\rhodown{\lambda}= \rho(\rim{o}_c) -rN^{-1/3}=\rho(\rim{o}_c) -N^{-1/5}, 
\]
we can establish constants $0<\rho_{\rm min}<\rho_{\rm max}<1$ and  $N_0(\e)\in\Z_{>0}$ such that  
$\rhoup{\rho},  \rhodown{\lambda} \in [\rho_{\rm min},\rho_{\rm max}]$ for all $o\in\partial^N$ and $N\ge N_0(\e)$.  
As $|o-o_c|\leq \tfrac12d_1N^{2/3}$ and  $|\rim{o}-\rim{o}_c|\leq \tfrac12d_2N^{2/3}$, the restriction of the slope to  $[\e, \e^{-1}]$    implies  that there is a constant $C=C(\e)$ such that
	\begin{align*}
	\abs{\tspa \rho(o_c)-\rho(o)\tspa} \leq Cd_1N^{-1/3}
	\qquad\text{and}\qquad 
		\abs{\tspa  \rho(\rim{o}_c)-\rho(\rim{o})} \leq Cd_2N^{-1/3}.
	\end{align*}
Then,  since $\rho(o)=\rho(-o)=\rho(\rim{o})$, 
\begin{align*}
 \abs{\tspb \rho(\rim{o}_c)-\rho(o_c)\tsp}
 \le  \abs{\tspa  \rho(\rim{o}_c)-\rho(\rim{o})}  
 +  \abs{\tspa \rho(o_c)-\rho(o)\tspa} \leq  Cd_2N^{-1/3}  + Cd_1N^{-1/3}
 \le CN^{-5/24}. 
 \end{align*} 
 Hence 
 \begin{align*} 
\rhodown{\lambda} - \rhoup{\rho}= \rho(\rim{o}_c)-\rho(o_c)-2rN^{-1/3} 
 \begin{cases} 
 \le  -2N^{-1/5} (  1 - CN^{-1/120}\tspb)  \\[3pt] 
  \ge  -2N^{-1/5} (  1 + CN^{-1/120}\tspb) .
  \end{cases} 
\end{align*} 
We conclude that for $N\ge N_0(\e)$,    the mean step of $S_n$  satisfies 
\begin{align*}
\E(S_1)= \E\bigl[ \log{J^{\rhoup{\rho}}_i}-\log {\rim{J}^{\tspb\rhodown{\lambda}}_i}\bigr] 
=\psi_0(\rhodown{\lambda}) -\psi_0(\rhoup{\rho}) 
\in [-CN^{-/1/5}, 0]  
\end{align*} 
where the (new) constant $C=C(\e)$ works for all $o\in\partial^N$.

	In Theorem \ref{thm:lm2} set $x=(\log N)^2$ 
	to conclude that for $N\ge N_0(\e)$ 
	\begin{align}\label{la}
		\P\bigl\{\sup_{1\leq n\leq N^{2/3}}2\swalk_n<(\log N)^2\bigr\}\leq C(\log N)^3 N^{-1/5} . 
	\end{align} 
	This bound with the same constant $C=C(\e) $ works for all points $o\in\partial^N$ and all  $N\ge N_0(\e)$. 
	Similarly one can show that
	\begin{align}\label{la2}
		\P\bigl\{\sup_{-N^{2/3}\leq n\leq-1 }2\swalk'_n<(\log N)^2\bigr\}\leq 
		C(\log N)^3 N^{-1/5}.  
	\end{align}
The lemma follows by inserting these bounds and $r=N^{2/15}$ into  \eqref{2208}. 
\end{proof}

		
	 
The next lemma controls the quenched probability of paths from points $u\in\cI_{o,\dvec}$ that go through the edge $\ed_0$ from $\zevec$ to $\evec_1$  but miss the interval $\rim{\cI}_{\rim{o},\dvec}$ on the northeast side of the square $\lzb-N,N\rzb^2$.  
	The complement of $\rim{\cI}_{\rim{o},\dvec}$ on $\rim{\partial}^N$ is denoted by 
	\[
	\rim{\cF}_{\rim{o},\dvec}=\{v\in \rim{\partial}^N: \abs{v-\rim{o}}_1>\tfrac12{d_2}N^{\frac{2}{3}}\}. 
	\]

	\begin{figure}
		\begin{subfigure}{.5\textwidth}
			\begin{tikzpicture}[scale=0.5, every node/.style={transform shape}]
			\draw  (0,0) rectangle (10,10);
			\draw [dashed][line width=0.01cm] (0,5) -- (10,5);
			\draw [dashed][line width=0.01cm] (5,0) -- (5,10);
			\foreach \x in {0,...,6}
			{   \draw [fill] (0,\x*1/5) circle [radius=0.07];
			};
			\foreach \x in {0,...,3}
			{   \draw [fill] (\x*1/5,0) circle [radius=0.07];
			};
			\node [scale=2][above] at (1.2,0.3) {$\cI_{o,\dvec}$};
			\node [scale=2][above][red] at (-0.5,-0.7) {$o_c$};
			\node [scale=2][above][cyan] at (-0.5,0.3) {$o$};
			\draw [fill] (0,3/5) circle [radius=0.07][cyan];
			\draw [fill] (0,0) circle [radius=0.07][red];
			\draw [dashed][line width=0.01cm] (0,1/5) -- (0,3/5);
			\foreach \x in {0,...,8}
			{   \draw [fill] (10,10-\x*1/5) circle [radius=0.07];
			};
			\foreach \x in {0,...,8}
			{   \draw [fill] (10-\x*1/5,10) circle [radius=0.07];
			};
			\draw [fill] (10,10) circle [radius=0.07][red];
			\draw [fill] (10,9.4) circle [radius=0.07][cyan];
			\node [scale=2][above][red] at (10,10) {$\rim{o}_c$};
			\node [scale=2][above][cyan] at (10.5,9) {$\rim{o}$};
			\node [scale=2][above] at (8.9,8) {$\rim{\cI}_{\rim{o},\dvec}$};
			\node [scale=2][above][blue] at (6.8,8.2) {$\rim{\cF}^1_{\rim{o},\dvec}$};
			\node [scale=2][above][blue] at (8.8,5.6){$\rim{\cF}^2_{\rim{o},\dvec}$}; 
			
			\foreach \x in {9,...,23}
			{   \draw [fill] (10-\x*1/5,10) circle [radius=0.07][blue];
			};
			\foreach \x in {9,...,23}
			{   \draw [fill] (10,10-\x*1/5) circle [radius=0.07][blue];
			};
			\end{tikzpicture}
		\end{subfigure}%
		\begin{subfigure}{.5\textwidth}
			\begin{tikzpicture}[scale=0.5, every node/.style={transform shape}]
			\draw  (0,0) rectangle (10,10);
			\draw [dashed][line width=0.01cm] (0,5) -- (10,5);
			\draw [dashed][line width=0.01cm] (5,0) -- (5,10);
			\foreach \x in {1,...,9}
			{   \draw [fill] (0,2+\x*1/5) circle [radius=0.07];
			};
			\node [scale=2][above] at (1.2,2) {$\cI_{o,\dvec}$};
			\node [scale=2][above][red] at (-0.6,1.7) {$o_c$};
			\node [scale=2][above][cyan] at (-0.6,2.6) {$o$};
			\draw [fill] (0,3) circle [radius=0.07][cyan];
			\draw [fill] (0,2.2) circle [radius=0.07][red];
			\draw [dashed][line width=0.01cm] (0,1/5) -- (0,3/5);
			\foreach \x in {1,...,9}
			{   \draw [fill] (10,6+\x*1/5) circle [radius=0.07];
			};
			\draw [fill] (10,7.8) circle [radius=0.07][red];
			\draw [fill] (10,7) circle [radius=0.07][cyan];
			\node [scale=2][above][red] at (10.6,7.3) {$\rim{o}_c$};
			\node [scale=2][above][cyan] at (10.6,6.5) {$\rim{o}$};
			\node [scale=2][above] at (9,6.5) {$\rim{\cI}_{\rim{o},\dvec}$};
			\node [scale=2][above][blue] at (7.2,8.2) {$\rim{\cF}^1_{\rim{o},\dvec}$};
			\node [scale=2][above][blue] at (8.8,4.8) {$\rim{\cF}^2_{\rim{o},\dvec}$};
			\foreach \x in {0,...,23}
			{   \draw [fill] (10-\x*1/5,10) circle [radius=0.07][blue];
			};
			\foreach \x in {1,...,4}
			{   \draw [fill] (10,5.2+\x*1/5) circle [radius=0.07][blue];
			};
			\foreach \x in {1,...,10}
			{   \draw [fill] (10,10-\x*1/5) circle [radius=0.07][blue];
			};		
			\end{tikzpicture}
		\end{subfigure}
		\caption{\small The square  $\lzb-N,N\rzb^2$ with  two possible arrangements of the segments ${\cI}_{o,\dvec}$, $\rim{\cI}_{\rim{o},\dvec}$ and $\rim{\cF}_{\rim{o},\dvec}=\rim{\cF}^1_{\rim{o},\dvec}\cup\rim{\cF}^2_{\rim{o},\dvec}$ on the boundary of the square. In both cases $\rim{o}=-o$.}  
		\label{fig:points}
	\end{figure}

	\begin{lemma} \label{lm:far}  
		Let $\dvec=(d_1,d_2)=(1,N^\frac{1}{8})$.     There are finite constants $C(\e)$ and $N_0(\e)$ such that, for all  $\delta>0$,  $N\ge N_0(\e)$ and  $o\in \partial^N$ with $\rim{o}=-o\in\rim\partial^N$,  
		\be\label{far7} 
		\P\Big(\sup_{u\,\in\,\cI_{o,\dvec},\,v\,\in\,\rim{\cF}_{\rim{o},\dvec}} p^{u,v}_0>\delta\Big)\leq C(\e)\delta^{-1} N^{-\frac{3}{8}}. 
		\ee
	\end{lemma}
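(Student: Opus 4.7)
The strategy is to adapt the argument of Lemma \ref{lm:close} by partitioning $\rim{\cF}_{\rim{o},\dvec}$ into dyadic sub-blocks, each handled by its own stationary comparison. The crucial new feature is that for $v$ far from $-o$, the typical $y$-axis crossing of a polymer path from $u \in \cI_{o,\dvec}$ to $v$ is shifted away from $\zevec$, so $p^{u,v}_0$ becomes polynomially small as a KPZ transversal fluctuation event. Because $\rim o = -o$, $|u-o|_1 \le \tfrac12 d_1 N^{2/3}$ and $|v-\rim o|_1 \ge \tfrac12 d_2 N^{2/3} = \tfrac12 N^{19/24}$, the straight line from $u$ to $v$ passes at distance at least $\tfrac14 N^{19/24} = \tfrac14 N^{1/8}\cdot N^{2/3}$ from $\zevec$, which is $\Omega(N^{1/8})$ on the KPZ transversal scale.

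Concretely, write $\rim{\cF}_{\rim{o},\dvec}=\bigcup_{k=1}^K \rim{\cI}^{(k)}$ as a union of $K=O(\log N)$ dyadic sub-intervals, with $\rim{\cI}^{(k)}$ centered at a point $\rim{o}^{(k)}\in\rim\partial^N$ at distance $\sim 2^{k-1}d_2 N^{2/3}$ from $\rim o$ and having comparable width. For each $k$, apply the construction of Lemma \ref{lem ge2} with base point $\rim o^{(k)}_c$ and perturbed parameters $\rhoup{\lambda}_k,\rhodown{\lambda}_k=\rho(\rim o^{(k)}_c)\pm rN^{-1/3}$ where $r=N^{2/15}$, producing a good event $B^{(k)}$ with $\P(B^{(k)})\ge 1-CN^{-2/5}$. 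On $A_{o,\dvec}\cap B^{(k)}$, the analog of \eqref{sw5} together with \eqref{pi2} yields
\[
\sup_{u\in\cI_{o,\dvec},\,v\in\rim{\cI}^{(k)}} p^{u,v}_0 \;\le\; C(\mwalk^{(k)}_j)^{-1} \qquad \text{for any }\; j\in\lzb 1,N^{2/3}\rzb,
\]
where $\mwalk^{(k)}$ is the multiplicative walk built from the ratios $J^{\rhoup{\rho}}_i/\rim J^{\tspb\rhodown{\lambda}_k}_i$ analogous to \eqref{2192}.

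Because $\rim o^{(k)}_c$ lies at distance $\sim 2^{k-1}d_2 N^{2/3}$ from $\rim o$, Lemma \ref{lem:psi} gives $|\rho(\rim o^{(k)}_c)-\rho(o_c)|\sim 2^k d_2 N^{-1/3}$, so the log-step mean $\psi_0(\rhodown{\lambda}_k)-\psi_0(\rhoup{\rho})$ is positive of order $2^k d_2 N^{-1/3}$ once $2^k d_2\gtrsim r$, i.e.~once $k\gtrsim 1+\tfrac{1}{120}\log_2 N$. For such $k$ the walk $\mwalk^{(k)}_{N^{2/3}}$ is exponentially large with overwhelming probability, making $(\mwalk^{(k)}_{N^{2/3}})^{-1}$ negligible. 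For the small remaining values of $k$ the walks have no useful drift and the argument of Lemma \ref{lm:close} applies verbatim to the shifted base point, giving a contribution of at most $C(\log N)^6 N^{-2/5}$ to $\E[\sup_{\cI\times\rim\cI^{(k)}}p^{u,v}_0]$.

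Summing over the $O(\log N)$ sub-blocks and the bad-event contributions $\P((A_{o,\dvec}\cap B^{(k)})^c)\le Cr^{-3}=CN^{-2/5}$, one obtains $\E[\sup_{u,v} p^{u,v}_0]\le C(\e)(\log N)^7 N^{-2/5}$. Markov's inequality then yields
\[
\P\Big(\sup_{u\,\in\,\cI_{o,\dvec},\,v\,\in\,\rim{\cF}_{\rim{o},\dvec}}p^{u,v}_0>\delta\Big) \;\le\; \delta^{-1}\,\E\bigl[\sup p^{u,v}_0\bigr] \;\le\; C(\e)\,\delta^{-1}(\log N)^7 N^{-2/5} \;\le\; C(\e)\,\delta^{-1} N^{-3/8}
\]
for $N\ge N_0(\e)$, using $N^{-2/5}(\log N)^7\le N^{-3/8}$ for large $N$ (since $2/5-3/8=1/40>0$). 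The main obstacle is Step 3: the walk analysis for the small-$k$ sub-blocks, where drift is absent, must exactly mirror Lemma \ref{lm:close} but with shifted base point $\rim o^{(k)}_c$ in place of $\rim o_c=-o_c$, which requires checking that the independence structure \eqref{cor b-rw1} and the comparison bounds \eqref{Bod1} carry over intact under this translation.
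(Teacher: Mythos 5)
Your proposal departs from the paper's proof and, as written, has a genuine gap in the block-comparison step.

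The paper's argument is much shorter and does not attempt a separate stationary comparison for each sub-block of $\rim\cF_{\rim o,\dvec}$. It splits $\rim\cF_{\rim o,\dvec}$ into the two down-right--ordered components $\rim\cF^1_{\rim o,\dvec}$ ($v\og q^1$) and $\rim\cF^2_{\rim o,\dvec}$ ($v\succcurlyeq q^2$), identifies a single extremal pair $(h^1,q^1)$ (resp.\ $(h^2,q^2)$) that dominates every pair $(u,v)$ with $u\in\cI_{o,\dvec}$, $v\in\rim\cF^1_{\rim o,\dvec}$ in the order $\og$ of Appendix \ref{sec:p-ord}, and invokes the monotone coupling of quenched polymer measures (Lemma \ref{lem:opm}) to conclude $p^{u,v}_0\le Q_{h^1,q^1}(\pathsp^-_{h^1,q^1})$ simultaneously for all such $(u,v)$. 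Since the segment $[h^1,q^1]$ crosses the $y$-axis at height $\gtrsim\e\,d_2 N^{2/3}$, one application of the KPZ wandering bound, Theorem \ref{thm:kpz5}, gives $P_{h^1,q^1}(\pathsp^-_{h^1,q^1})\le C(\e)d_2^{-3}=C(\e)N^{-3/8}$, and Markov's inequality finishes. No random-walk machinery is needed here at all.

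The gap in your version is concrete. You build blocks $\rim\cI^{(k)}$ of half-width $\sim 2^{k-1}d_2 N^{2/3}$ but keep the perturbation $r=N^{2/15}$ fixed. The comparison machinery of Lemma \ref{lem ge2} (through Lemma \ref{lem-lb1}) requires $C_0(\e)\tspa d \le r$, i.e.\ $C_0(\e)\tspa 2^{k-1}N^{1/8}\le N^{2/15}$, which fails once $2^{k-1}\gtrsim N^{1/120}$. But that is precisely the threshold you identify as the onset of useful drift. For those $k$ the good event $B^{(k)}$, and hence the bound $p^{u,v}_0\le C(\mwalk^{(k)}_j)^{-1}$, are simply not available: the drift calculation has no valid comparison to feed into. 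To rescue the plan you would need to let $r=r^{(k)}$ grow with $k$ (say $r^{(k)}\asymp 2^k d_2$), re-balance the bad-event cost $r^{(k)-3}$ against the number of blocks and the attenuated drift, and check the upper constraint $r^{(k)}\le c(\e)N^{1/3}$ for the largest blocks --- a real computation that is absent from your outline. A secondary omission: you do not split $\rim\cF_{\rim o,\dvec}$ into its two components, which is what determines the sign of the drift and which half ($n>0$ or $n<0$) of the two-sided walk runs uphill; without that split the step ``the walk is exponentially large with overwhelming probability'' is not justified.
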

	\begin{proof}
		Define the   sets of boundary points
		\begin{align*}
		&\partial \rim{\cF}_{\rim{o},\dvec}=\{v\in \rim{\cF}_{\rim{o},\dvec}:\exists u\in \rim{\cI}_{\rim{o},\dvec} \text{ such that }  \abs{v-u}_1=1\}\\
		&\partial \cI_{o,\dvec}=\{v\in \cI_{o,\dvec} :\exists u\in \partial^N\setminus \cI_{o,\dvec}  \text{ such that }  \abs{v-u}_1=1\},
		\end{align*}
		Their cardinalities satisfy $1\le |\partial \rim{\cF}_{\rim{o},\dvec}|\leq   |\partial \cI_{o,\dvec}| \leq 2$.  (For example,  $\partial \rim{\cF}_{\rim{o},\dvec}$ is a singleton if $\rim{\cI}_{\rim{o},\dvec}$ contains one of the endpoints $(N, \fl{\e N})$ or $(\fl{\e N}, N)$  of $\rim{\partial}^N$.) 
		We denote  the points of $\partial \rim{\cF}_{\rim{o},\dvec}$ by $q^1,q^2$  and those of $\partial \cI_{o,\dvec}$  by $h^1,h^2$, 
		labeled so that  
	\[   	 q^1\preccurlyeq  \rim{o}  \preccurlyeq  q^2
	\qquad\text{and}\qquad 
	h^2 \preccurlyeq  o_1 \preccurlyeq  h^1. \]
		Geometrically,  starting from the north pole $N\evec_2$ and  traversing the boundary of the square $\lzb-N,N\rzb^2$ clockwise, we meet the points (those that exist) in this order:  $q^1\to \rim{o}\to q^2\to h^1\to o\to h^2$ (Figure \ref{fig:bigdev}). The set $\rim{\cF}_{\rim{o},\dvec}$ can be decomposed into two disjoint sets
		\begin{align*}
			\rim{\cF}_{\rim{o},\dvec}=\rim{\cF}^1_{\rim{o},\dvec}\cup \rim{\cF}^2_{\rim{o},\dvec}
		\end{align*}
		where
		\begin{align*}
			\rim{\cF}^1_{\rim{o},\dvec}=\{v\in \rim{\cF}_{\rim{o},\dvec}:v\og q^1\} \qquad\text{and}\qquad
			 \rim{\cF}^2_{\rim{o},\dvec}=\{v\in \rim{\cF}_{\rim{o},\dvec}: v\succcurlyeq q^2\}. 
		\end{align*}
		We show that
		\be\label{2206} 
			\P\Big(\sup_{u\,\in\,\cI_{o,\dvec},\,v\,\in\,\rim{\cF}^1_{\rim{o},\dvec}} p^{u,v}_0>\delta\Big)\leq C(\e)\delta^{-1}  N^{-\frac{3}{8}}.  
		\ee
		The same bound can be shown for $\rim{\cF}^2_{\rim{o},\dvec}$ and the lemma follows from a union bound.
		
		Recall the definition of $\pathsp^i_{u,v}$ in  \eqref{tz} and define the set
		\begin{align}\label{pm}
			\pathsp^-_{u,v}=\bigcup_{i\leq 0}\pathsp^i_{u,v}.
		\end{align}
		 For all $u\in\cI_{o,\dvec}$ and $v\in\rim{\cF}^1_{\rim{o},\dvec}$, the pairs $(u,v)$ and  $(h^1,q^1)$ satisfy the relation $(u,v)\og (h^1,q^1)$ defined in \eqref{def:ogg}.  
By Lemma \ref{lem:opm} we can couple random paths $\pi^{u,v}\sim Q_{u,v}$ and $\pi^{h^1,q^1}\sim Q_{h^1,q^1}$ so that $\pi^{u,v}\og\pi^{h^1,q^1}$  
in the path ordering defined in Appendix \ref{sec:p-ord}, 
simultaneously for all $u\in\cI_{o,\dvec}$ and $v\in\rim{\cF}^1_{\rim{o},\dvec}$.    Then $\pi^{u,v}\in \pathsp^0_{u,v}$ forces $\pi^{h^1,q^1}\in\pathsp^-_{h^1,q^1}$, and we conclude that 
			\begin{align*} 
			p^{u,v}_0=Q_{u,v}(\pathsp^0_{u,v})
			\leq Q_{h^1,q^1}\big(\pathsp^-_{h^1,q^1}\big)
			\qquad\text{for all }  \ 
			u\in\cI_{o,\dvec},\,v\in\rim{\cF}^1_{\rim{o},\dvec}. 
		\end{align*}
		Hence 
		\[ \text{the probability on the left of \eqref{2206}} \;\le\; \P\{ Q_{h^1,q^1}(\pathsp^-_{h^1,q^1})>\delta\}. \] 
The last probability will be shown to be small by appeal to a KPZ wandering exponent bound from \cite{sepp-12-aop-corr} stated in Appendix \ref{sec:kpz5}.  To this end we check  that the line segment $[h^1,q^1]$ from $h^1$ to $q^1$ crosses the vertical axis far above the origin on the scale $N^{2/3}$.

		
		For $o\in \partial^N$ and $\rim{o}=-o\in\rim\partial^N$,   decompose   $h^j=o+l^j$ and $q^j=\rim{o}+r^j$.  These vectors $l^j=(l^j_1, l^j_2)$  and $r^j=(r^j_1, r^j_2)$  satisfy  
		\be\label{e^uv}   \abs{l^j}_1= \tfrac12d_1N^\frac{2}{3}, \quad  \abs{r^j}_1= \tfrac12d_2N^\frac{2}{3}, 
		 \quad\text{and}\quad r^j_1r^j_2\leq 0. \ee

		Use first the definition of $h^j$ and then   $q^j_i-h^j_i= \rim{o}_i+r^j_i-(o_i+l_i^j) = -2o_i +r^j_i- l^j_i$ to obtain 
		\be \label{234} \begin{aligned}
		h^j_2 - \frac{q^j_2-h^j_2}{q^j_1-h^j_1}h^j_1 
		&=o_2- \frac{q^j_2-h^j_2}{q^j_1-h^j_1}o_1 +l^j_2  - \frac{q^j_2-h^j_2}{q^j_1-h^j_1} l^j_1 \\[4pt] 
			&= \frac{o_2r^j_1-o_1r^j_2}{q^j_1-h^j_1}  - \frac{o_2l^j_1-o_1l^j_2}{q^j_1-h^j_1} +l^j_2  - \frac{q^j_2-h^j_2}{q^j_1-h^j_1} l^j_1 . 
		\end{aligned}\ee 	
		The first term on the last line is of order $\Theta(d_2N^{2/3})$ because  there is no cancellation in the numerator.    It   is positive if $j=1$ and negative if $j=2$.   This term   dominates because $d_2=N^\frac{1}{8}>>1=d_1$.
		
	Let $y^1\evec_2\in[h^1,q^1]$, that is, $y^1$ is the distance from the origin to the point where  the line segment $[h^1,q^1]$ crosses the $y$-axis.   We bound this quantity from below.  In addition to \eqref{e^uv},  utilize $\;-N\le o_i\le -\e N$,  $2N\e\le q^j_i-h^j_i\le 2N$ and   the slope bound $\e\le \frac{q^j_2-h^j_2}{q^j_1-h^j_1}\le \e^{-1}$.  The last line of \eqref{234} gives    
		 		\be\label{237-1} \begin{aligned}
				y^1=
			h^1_2 +\frac{q^1_2-h^1_2}{q^1_1-h^1_1}(-h^1_1)  
			&\ge  \frac{\e N  \abs{r^1}_1}{ 2N}    - \Bigl(   \frac{N}{2N\e} +1+\e^{-1}\Bigr) \abs{l^1}_1  \\[4pt] 
			&\ge    \tfrac14\e d_2N^\frac{2}{3}   - 2\e^{-1}  d_1N^\frac{2}{3}  
			\ge  \tfrac1{8}\e d_2N^\frac{2}{3}. 
		\end{aligned} \ee
		The last inequality used $(d_1, d_2)=(1,N^{1/8})$ and took $N\ge (16\e^{-2})^8$.
	The wandering exponent bound stated in Theorem \ref{thm:kpz5} gives 			
		\begin{align*}
			P_{h^1,q^1}(\pathsp^-_{h^1,q^1})\leq C(\e)d_2^{\,-3}
		\end{align*}	
		for a constant $C(\e)$ that works for all $o\in\partial^N$ and $N\ge N_0(\e)$. 
		By Markov's inequality
		\begin{align}\label{ub}
			\P\{ Q_{h^1,q^1}(\pathsp^-_{h^1,q^1})>\delta\} \leq C(\e)\delta^{-1}d_2^{\,-3}=C(\e)\delta^{-1}N^{-3/8}.
		\end{align}
		The proof of \eqref{2206}  is complete.
	\end{proof}
	
We combine the estimates from above   to cover all vertices on $\partial^N$ and $\rim\partial^N$.  	
	
	\begin{figure}
		\includegraphics[width=8.3cm]{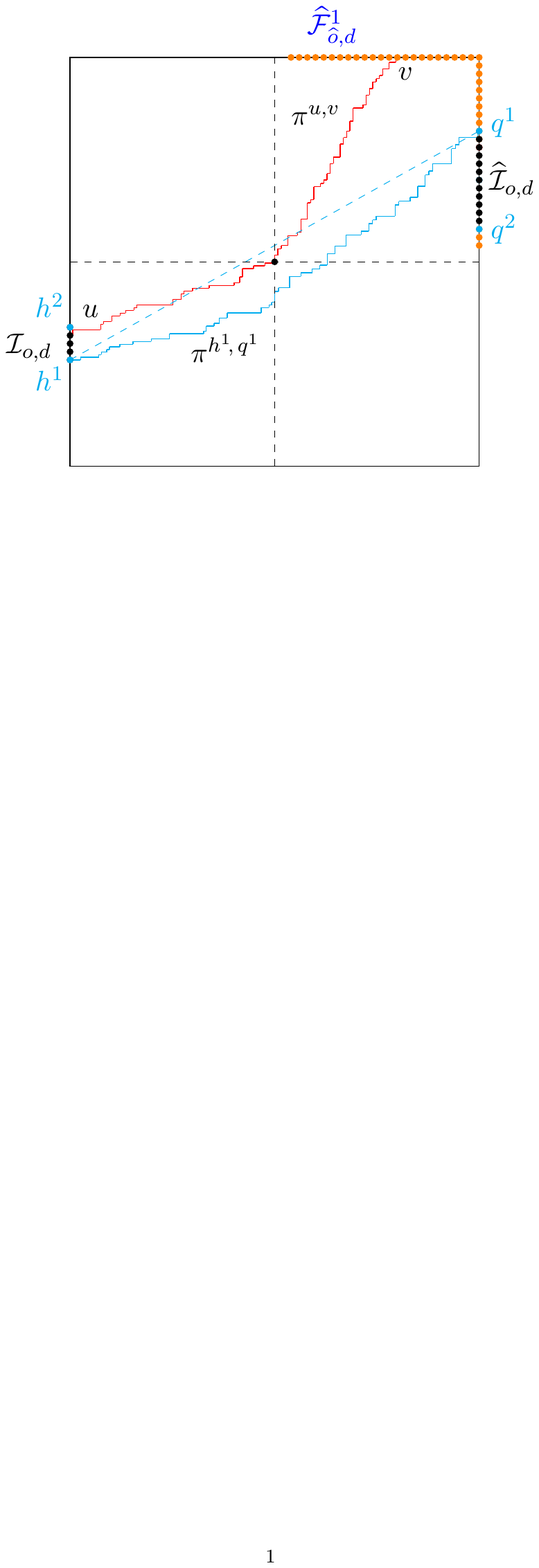} 
		\caption{\small Illustration of the proof of Lemma \ref{lm:far}.  The  path $\pi^{u,v}$ connects  $\cI_{o,\dvec}$ and   $\rim{\cF}^1_{\rim{o},\dvec}$   through the edge $\ed_0=((0,0),(1,0))$.    The  path  $\pi^{\,h^1\!, \,q^1}$  lies below $\pi^{u,v}$ and hence well below the $[h^1, q^1]$ line segment (dashed line).}
		\label{fig:bigdev}
	\end{figure}

	\begin{theorem}\label{thm:ub}  There exist constants $C(\e), N_0(\e)$ such that for $\delta\in(0,1)$ and  $N\ge \delta^{-1}\vee N_0(\e)$, 
	\[  
	\P\Big(\;	\sup_{u\,\in\,\partial^N,\,v\,\in\,\rim{\partial}^N} p^{u,v}_0>\delta \Big)\leq C(\e)\delta^{-1}N^{-\frac{1}{24}}.
	\]  
\end{theorem}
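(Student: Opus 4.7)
The plan is to combine Lemmas \ref{lm:close} and \ref{lm:far} via a coarse-graining union bound with the parameters fixed by those lemmas, namely $\dvec=(1,N^{1/8})$ and $r=N^{2/15}$. Since $|\partial^N|=\Theta(N)$ and each segment $\cI_{o,\dvec}$ has $d_1N^{2/3}=N^{2/3}$ points, we can choose representative points $o_1,\ldots,o_K\in\partial^N$ with $K=O(N^{1/3})$ (the implicit constant depending on $\e$) so that $\partial^N=\bigcup_{j=1}^K \cI_{o_j,\dvec}$. Setting $\rim o_j=-o_j\in\rim\partial^N$, we have for each $j$ the decomposition $\rim\partial^N=\rim\cI_{\rim o_j,\dvec}\cup\rim\cF_{\rim o_j,\dvec}$.

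By a union bound,
\[
\P\Big(\sup_{u\in\partial^N,\,v\in\rim\partial^N} p^{u,v}_0>\delta\Big)\le \sum_{j=1}^K \P\Big(\sup_{\substack{u\in\cI_{o_j,\dvec}\\v\in\rim\cI_{\rim o_j,\dvec}}} p^{u,v}_0>\delta\Big) + \sum_{j=1}^K \P\Big(\sup_{\substack{u\in\cI_{o_j,\dvec}\\v\in\rim\cF_{\rim o_j,\dvec}}} p^{u,v}_0>\delta\Big).
\]
The second sum is controlled directly by Lemma \ref{lm:far}, giving the bound $K\cdot C(\e)\delta^{-1}N^{-3/8}=C(\e)\delta^{-1}N^{1/3-3/8}=C(\e)\delta^{-1}N^{-1/24}$, which is exactly the target order of the theorem. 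For the first sum, we exploit the standing assumption $N\ge\delta^{-1}$, equivalently $\delta\ge N^{-1}$, to conclude $\{\sup p^{u,v}_0>\delta\}\subseteq\{\sup p^{u,v}_0>N^{-1}\}$. Then Lemma \ref{lm:close} gives $K\cdot C(\e)(\log N)^6 N^{-2/5}=C(\e)(\log N)^6 N^{-1/15}$.

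Finally, since $1/15>1/24$ by the margin $1/15-1/24=1/40$, and $\delta^{-1}\ge 1$ because $\delta<1$, we have $(\log N)^6 N^{-1/15}\le (\log N)^6 N^{-1/40}\cdot \delta^{-1}N^{-1/24}\le \delta^{-1}N^{-1/24}$ once $N\ge N_0(\e)$ is large enough to absorb the logarithmic factor. Adding the two contributions and enlarging $C(\e)$ yields the claimed inequality.

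The execution is essentially arithmetic bookkeeping of exponents; the two nontrivial estimates (Lemmas \ref{lm:close} and \ref{lm:far}) are already in place. The only care required is that the parameter choices $(d_1,d_2,r)=(1,N^{1/8},N^{2/15})$ simultaneously satisfy the hypotheses of Lemma \ref{lem-lb1} underlying both lemmas, which is routine: $d_1=1$ and $r=N^{2/15}$ satisfy $C_0(\e)d_1\le r\le c(\e)N^{1/3}$ for large $N$, and $d_2=N^{1/8}$ together with $r=N^{2/15}$ satisfy $C_0(\e)d_2\le r$ because $2/15-1/8=1/120>0$. This is the one place where the specific choice of the exponent $1/8$ for $d_2$ is visible; it balances the coarse-graining factor $N^{1/3}$ in Lemma \ref{lm:far}'s exponent against the KPZ-induced decay $d_2^{-3}=N^{-3/8}$ to produce the rate $N^{-1/24}$ that the theorem records.
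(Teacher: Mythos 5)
Your proof is correct and follows essentially the same route as the paper's: coarse-grain $\partial^N$ into $O(N^{1/3})$ blocks $\cI_{o_j,\dvec}$ with $\dvec=(1,N^{1/8})$, decompose $\rim\partial^N$ into $\rim\cI_{\rim{o}_j,\dvec}\cup\rim\cF_{\rim{o}_j,\dvec}$, and bound the two contributions by Lemma \ref{lm:close} (applicable because $N\ge\delta^{-1}$ gives $\delta\ge N^{-1}$) and Lemma \ref{lm:far} respectively, then absorb the $(\log N)^6 N^{-1/15}$ term into $\delta^{-1}N^{-1/24}$ using the margin $1/15-1/24=1/40$. The only difference from the paper is the cosmetic one of whether the union bound over blocks is taken before or after the near/far split; otherwise the argument, including the parameter-consistency check for Lemma \ref{lem-lb1}, matches the paper's.
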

\begin{proof}
	As before,  $\dvec=(1,N^\frac{1}{8})$. 	
We first claim that for any  $o\in \partial^N$, 	
	\be\label{ub43} 
			\P\Big(\sup_{u\tspb\in\tspb\cI_{o,\dvec}\tspa, \, v\,\in\,\rim{\partial}^N
			} p^{u,v}_0>\delta\Big)\leq C(\e)\delta^{-1} N^{-\frac{3}{8}}. 
		\ee 
 		This comes from a combination of  Lemmas \ref{lm:close} and   \ref{lm:far}:  since $\rim{\partial}^N=\rim{\cI}_{\rim{o},\dvec}\cup \rim{\cF}_{\rim{o},\dvec}$,    
		\begin{align*} 
		\P\Big(\sup_{u\tspb\in\tspb\cI_{o,\dvec},v\,\in\,\rim{\partial}^N
		} p^{u,v}_0>\delta\Big)
		&\leq \P\Big(\sup_{u\tspb\in\tspb\cI_{o,\dvec},v\,\in\,\rim{\cI}_{\rim{o},\dvec}
	} p^{u,v}_0>\delta\Big)+\P\Big(\sup_{u\tspb\in\tspb\cI_{o,\dvec},v\,\in\,\rim{\cF}_{\rim{o},\dvec}
} p^{u,v}_0>\delta\Big) \\
&\le   C(\e)(\log N)^6N^{-\frac25}    
 + C(\e)\delta^{-1} N^{-\frac{3}{8}}
\leq C(\e)\delta^{-1} N^{-\frac{3}{8}}.     	 
		\end{align*} 
 Next we  coarse grain the southwest boundary  $\partial^N$.  Let 
	\begin{align*}
	\mathcal{O}^N=  \partial^N \cap \Big(\big\{(-N +id_1\fl{N^{\frac{2}{3}}}\,,-N)\big\}_{i\in\Z_{\ge0}} \; \bigcup  \,\big\{(-N,-N+jd_1\fl{N^{\frac{2}{3}}})\big\}_{j\in\Z_{\ge0}}\Big)   
	\end{align*}
	so that 
	\begin{align*}
	\Big\{\sup_{u\,\in\,\partial^N,\,v\,\in\,\rim{\partial}^N} p^{u,v}_0>\delta\Big\} \subseteq \Big\{\sup_{o\,\in\, \mathcal{O}^N} 	\sup_{u\,\in\,\cI_{o,\dvec},\,v\,\in\, \rim{\partial}^N} p^{u,v}_0>\delta\Big \}.
	\end{align*}
	As $|\mathcal{O}^N|\leq C(\e) d_1^{-1}N^{1-\frac{2}{3}}=C(\e)N^\frac{1}{3}$, a  union bound and   \eqref{ub43} give the conclusion: 
\begin{align*}
	\P\Big(\sup_{u\,\in\,\partial^N,\,v\,\in\,\rim{\partial}^N} p^{u,v}_0>\delta\Big) 
	&\leq \sum_{o\,\in\,\mathcal{O}^N} \P\Big(\;	\sup_{u\,\in\,\cI_{o,\dvec}, \,v\,\in\,\rim{\partial}^N} p^{u,v}_0>\delta\Big)\\
	&\leq C(\e)N^{\frac{1}{3}}\delta^{-1}N^{-\frac{3}{8}}=C(\e)\delta^{-1}N^{-\frac{1}{24}}.
	\qedhere 
	\end{align*} 
\end{proof}

\medskip 	
	
\section{Proof of the main theorem} 
\label{sec:pf-main}

	\begin{proof}[Proof of Theorem \ref{thm:noex}] 
By Theorem \ref{thm:e_i-mu}(b), for almost every $\w$ every bi-infinite Gibbs measure $\mu$  satisfies 
\be\label{3067} \begin{aligned}
&\bigl\{ \tsp\varliminf_{\abs n\to\infty} \abs{n^{-1} X_n\cdot\evec_1} =0\bigr\} 	
\cup
\bigl\{ \tsp\varliminf_{\abs n\to\infty} \abs{n^{-1} X_n\cdot\evec_2} =0\bigr\}  \\
	&\qquad\qquad =
	\{\text{$X_\bbullet$ is a bi-infinite straight line}\} 
	\qquad\text{$\mu$-almost surely}  
\end{aligned}\ee 
where $X_\bbullet=X_{-\infty:\infty}$  is   the bi-infinite polymer path under the measure $\mu$. 
This equality  follows because Theorem \ref{thm:e_i-mu}(b) has these consequences for \eqref{3067}: the union on the left is disjoint,  the event on the right is a subset of the  union on the left,  and   their $\mu$-probabilities are equal.  The complement of the union on the left is the following event:   the limit points of $\abs{n}^{-1}X_n$  lie in $\,]-\evec_2, -\evec_1[$ when  $n\to-\infty$ and  in $\,]\evec_2, \evec_1[$ when  $n\to\infty$.
Thus to complete the proof we show the existence of an event $\Omega'$ such that $\P(\Omega')=1$ and for each $\w\in\Omega'$, no  $\mu\in \overleftrightarrow{\text{\rm DLR}}^\omega$  assigns positive probability to this last property of the limit points  of $\abs{n}^{-1}X_n$. 
		
		

\medskip 		
		
We put $\e$ back into the notation. For $\e>0$ let
		\begin{align*}
			\cD^\e=\{\xi\in\,]\evec_2,\evec_1[\,:\e^{1/2} \leq \xi_2/\xi_1\leq {\e}^{-1/2}\}. 
		\end{align*}
Say that a bi-infinite path $x_\bbullet$ is 	$(-\cD^{\e})\times \cD^{\e}$-directed if the limit points of $\abs{n}^{-1}x_n$  lie in $-\cD^{\e}$ when  $n\to-\infty$ and  in $\cD^{\e}$ when  $n\to\infty$.  
				Recall the definition of the edges $\ed_i=(i\evec_2,i\evec_2+\evec_1)$   and define these sets of bi-infinite paths:  
		\begin{align*}
			\pathsp^{\e, i}=\big\{x_\bbullet\in\pathsp:  \text{ $x_\bbullet$ is $(-\cD^{\e})\times \cD^{\e}$-directed and $x_\bbullet$ goes through $\ed_i$}
			\big\}. 
		\end{align*}
	We show the existence of an event $\Omega'$ 	  of full $\P$-probability such that, for $\w\in\Omega'$,  $\mu\in \overleftrightarrow{\text{\rm DLR}}^\omega$, $\e>0$,  and $i\in\Z$,  
		\begin{align}\label{c00}
			\mu(\pathsp^{\e, i})=0.
		\end{align}
Assume this proved. 
 Let   $\e_k=2^{-k}$.  Then for $\w\in\Omega'$ and $\mu\in \overleftrightarrow{\text{\rm DLR}}^\omega$, 
		\begin{align*}
			\mu\{\text{$X_\bbullet$ 
			 is $\,]-\evec_2,-\evec_1[\,\times\,]\evec_2,\evec_1[\,$-directed}\}  
			 &\le \sum_{k\ge1}  \mu \{ \text{$X_\bbullet$ is $(-\cD^{\e_k})\times\cD^{\e_k}$-directed}\} \\
			&\le  \sum_{k\ge1}\sum_{i\in\Z}\mu\big(\pathsp^{\e_k, i}\big)
			 =0,
		\end{align*}
		which is the required result. 
		
	
	\medskip 
	
		It remains to define the event $\Omega'$ and verify \eqref{c00}.    Recall the  definition \eqref{pi} of $p^{u,v}_i$.   Define translations $T_x$ on weight configurations $\w=(Y_x)$ by $(T_x\w)_y=Y_{x+y}$.  Define 
\[  \xi^{\e}_N= \sup_{u\tspb\in\tspb\partial^{N\!,\tsp\e}\!,\,v\tspb\in\tspb\rim{\partial}^{N\!,\tsp\e}}p^{u,v}_0\,,
\qquad
\Omega''_{\e}=\bigl\{  \varliminf_{N\to\infty} \xi^{\e}_{N+\ce{N^{2/3}}}=0\bigr\}
\qquad\text{and}\qquad 
\Omega'=\bigcap_{k\ge 1} \bigcap_{i\in\Z}  T_{i\evec_2}\Omega''_{\e_k} .  \]  
By Theorem \ref{thm:ub}, 	$\xi^{\e}_{N}\to0$ in probability as $N\to\infty$, and hence $\P(\Omega')=\P(\Omega''_{\e})=1$.

		 A $(-\cD^\e)\times\cD^{\e}$-directed bi-infinite path intersects both $ \partial^{N\!,\tspa\e}$  and  $\rim\partial^{N\!,\tspa\e}$ for all large enough $N$. (This is because $\cD^\e$  bounds the slopes by $\e^{1/2}$ which is larger than $\e$.)  Thus if we let 
				\begin{equation*}
			\pathsp^{N,\e,  i}=\{x_\bbullet\in\pathsp^{\e, i}:x_\bbullet\cap\partial^{N\!,\tspa\e}\neq\emptyset, \,x_\bbullet\cap\rim\partial^{N\!,\tspa\e}\neq\emptyset\} 
		\end{equation*}
		then 
		\be\label{c45}   \pathsp^{\e, i} = \bigcup_{m\ge 1} \bigcap_{N\ge m} \pathsp^{N,\e,  i}. \ee

   		\begin{figure}
		\includegraphics[width=10cm]{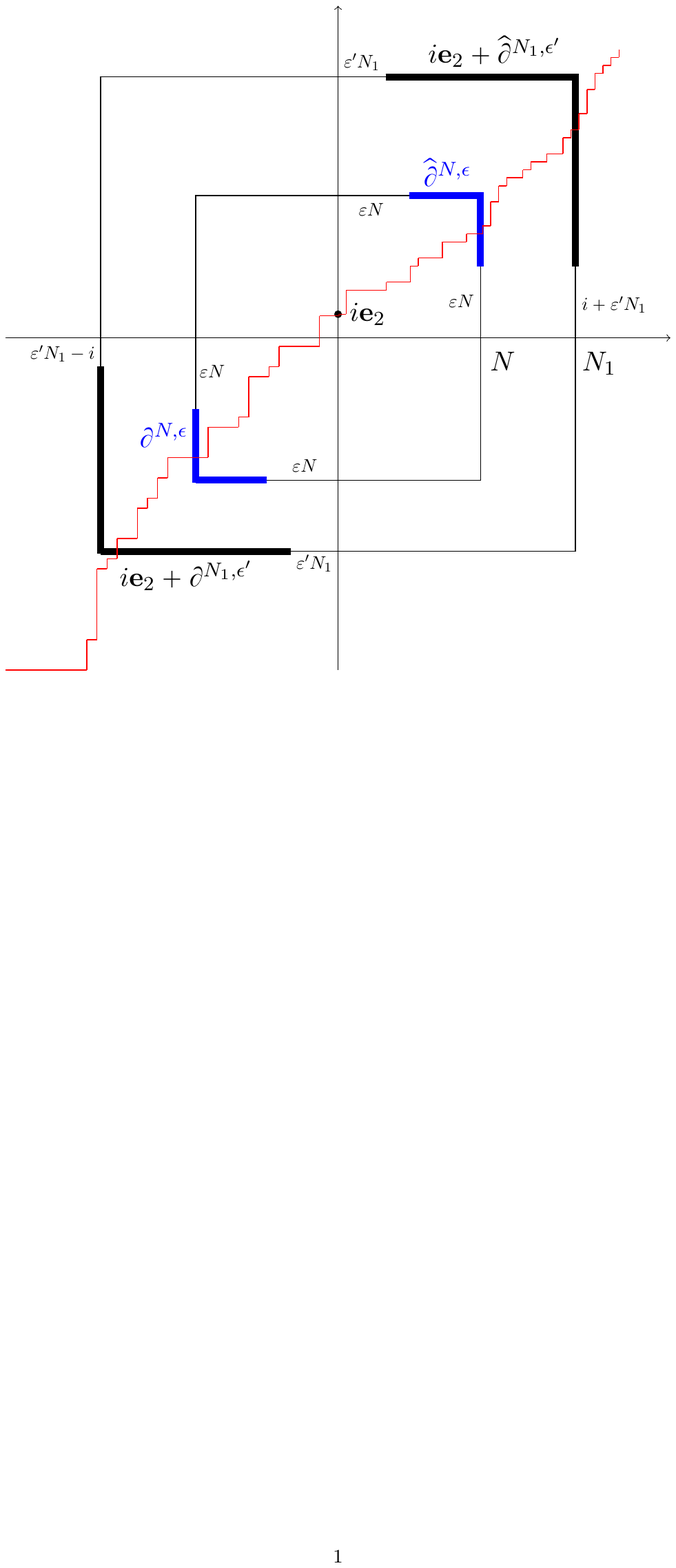} 
		\caption{\small The inner $N\times N$ square is centered at $\zevec$ while the outer $N_1\times N_1$ square is centered at $i\evec_2$. The (thick, dark) boundary segments of the outer square cover the (thick, light) boundary segments of the inner square. Thus the path through $i\evec_2$ that crosses $\partial^{N,\e}$ and $\rim\partial^{N,\e}$ is forced to also cross $i\evec_2+\partial^{N_1,\tspa\e'}$  and $i\evec_2+\rim\partial^{N_1,\tspa\e'}$.}
		\label{fig:2sq} 
	\end{figure}

Let $\e=2^{-k}$ for some $k\ge 1$, $\e'=\e/2$, and abbreviate $N_1=N+\ce{N^{2/3}}$.
   In the scale $N_1$ consider  the translated square $i\evec_2+\lzb -N_1, N_1\rzb^2$ centered at $i\evec_2$, with its boundary portions   $i\evec_2+\partial^{N_1,\tspa\e'}$ in the southwest  and $i\evec_2+\rim\partial^{N_1,\tspa\e'}$ in the northeast.  This translated $N_1$-square contains $\lzb-N,N\rzb^2$ for all  $i\in\lzb-N^{2/3}, N^{2/3}\rzb$.  

  There exists a finite constant $N_0(\e)$ such that  $\abs i+\e'N_1\le \e N$ for all  $i\in\lzb-N^{2/3}, N^{2/3}\rzb$ and $N\ge N_0(\e)$.  
   Then  every path $x_\bbullet\in\pathsp^{\e, \tsp N, \tsp i}$ necessarily goes through both  $i\evec_2+\partial^{N_1,\tspa\e'}$  and $i\evec_2+\rim\partial^{N_1,\tspa\e'}$.  In other words,  $x_\bbullet$ is a member  of the translate  $i\evec_2+\pathsp^{\e', \tsp N_1, \tsp 0}$ of the class of paths that go through the edge $\ed_0$. This is illustrated in Figure \ref{fig:2sq}.

		On the event  $\pathsp^{\e, \tsp N, \tsp i}$ let, in the coordinatewise ordering, $X_\partial=\inf\{X_\bbullet\cap (i\evec_2+\partial^{N_1,\tspa\e'})\}$ be the first vertex of the path $X_\bbullet$  in $i\evec_2+\partial^{N_1,\tspa\e'}$ and 
			$X_{\rim{\partial}}=\sup\{X_\bbullet\cap (i\evec_2+\rim\partial^{N_1,\tspa\e'})\}$  the last vertex of the path in $i\evec_2+\rim\partial^{N_1,\tspa\e'}$.   Note that for $u\in(i\evec_2+\partial^{N_1,\tspa\e'})$  and 
			$v\in(i\evec_2+\rim\partial^{N_1,\tspa\e'})$, the event $\{X_\partial=u, X_{\rim{\partial}}=v\}$ depends on the entire path $X_\bbullet$ only through its edges outside $i\evec_2+\lzb-N_1,N_1\rzb^2$. 
Suppose $\mu(\pathsp^{N,\e,  i})>0$ for some $\mu\in \overleftrightarrow{\text{\rm DLR}}^\omega$. Below we apply the Gibbs property,  recall the definition \eqref{tz}  of $\pathsp_{u,v}^0$ as the set of paths from $u$ to $v$ that take the edge $\ed_0=(\zevec, \evec_1)$, and write $Q^\w$ so that we can include explicitly translation of the weights $\w$. 
 		\begin{align*}
			\mu(\pathsp^{N,\e,  i})&\le \mu(i\evec_2+\pathsp^{\e', \tsp N_1, \tsp 0}) 
			\\
			&\le \sum_{u\tspb\in\tspb\partial^{N_1\!,\tspa\e'}\!,\,v\tspb\in\tspb\rim\partial^{N_1\!,\tspa\e'}} 
			\mu(i\evec_2+\pathsp_{u,v}^0\,\vert\,X_\partial=i\evec_2+u,X_{\rim{\partial}}=i\evec_2+v)\tspb\mu(X_\partial=i\evec_2+u,X_{\rim{\partial}}=i\evec_2+v)\\
			&=\sum_{u\tspb\in\tspb\partial^{N_1\!,\tspa\e'}\!,\,v\tspb\in\tspb\rim\partial^{N_1\!,\tspa\e'}} Q^\w_{i\evec_2+u,\tsp i\evec_2+v}(i\evec_2+\pathsp_{u,v}^0)
			\tspb\mu(X_\partial=i\evec_2+u,X_{\rim{\partial}}=i\evec_2+v)\\
			&\le \max_{u\tspb\in\tspb\partial^{N_1\!,\tspa\e'}\!,\,v\tspb\in\tspb\rim\partial^{N_1\!,\tspa\e'}}  Q^\w_{i\evec_2+u,\tsp i\evec_2+v}(i\evec_2+\pathsp_{u,v}^0)  
			=\max_{u\tspb\in\tspb\partial^{N_1\!,\tspa\e'}\!,\,v\tspb\in\tspb\rim\partial^{N_1\!,\tspa\e'}}  Q^{T_{i\evec_2}\w}_{u,\tsp v}(\pathsp_{u,v}^0) \\
			&=
			\max_{u\tspb\in\tspb\partial^{N_1\!,\tspa\e'}\!,\,v\tspb\in\tspb\rim\partial^{N_1\!,\tspa\e'}}p^{u,v}_0(T_{i\evec_2}\w)  = \xi^{\e'}_{N_1}(T_{i\evec_2}\w).
		\end{align*}
Then \eqref{c45} gives, on the event $\Omega'$, 	 
\[   	
\mu(\pathsp^{\e, i})  \le \varliminf_{N\to\infty} \mu(\pathsp^{N,\e,  i})   \le \varliminf_{N\to\infty}   \xi^{\e'}_{N_1}\circ T_{i\evec_2} =0.   
\]  
\eqref{c00} has been verified.  This completes the proof of the main result Theorem \ref{thm:noex}. 
	\end{proof}

	\smallskip

	\appendix

\section{General properties of planar directed polymers} 
\label{app:genpol}

This appendix covers some consequences of the general polymer formalism. 
Begin again with the partition function with given weights $Y_x>0$: 
 \be\label{m:G807} 
	Z_{u,v} =\sum_{x_\brbullet\tspa\in\tspa\pathsp_{u,v}} \prod_{i=0}^{\abs{v-u}_1} \Yw_{x_i} 
	\quad\text{for $u\le v$ on $\Z^2$,} 
	\ee
with $Z_{u,v}=0$ if $u\le v$ fails.  

\subsection{Ratio weights and nested polymers} 
  Keeping the base point $u$ fixed, define ratio weights for varying $x$: 
\[  I_x=I_{u,x}=\frac{Z_{u,x}}{Z_{u,x-\evec_1}}  \quad\text{ and }\quad  J_x=J_{u,x}=\frac{Z_{u,x}}{Z_{u,x-\evec_2}}.\] 
The ratio weights can be  calculated inductively from boundary values $I_{u+k\evec_1}=\Yw_{u+k\evec_1}$ and 
  $J_{u+\ell\evec_2}=\Yw_{u+\ell\evec_2}$  for $k,\ell\ge 1$,  by iterating 
\be\label{m:NE}   I_x= \Yw_x\bigl( 1+ I_{x-\evec_2} J_{x-\evec_1}^{-1}\bigr) 
 \quad\text{ and }\quad  
  J_x=  \Yw_x\bigl(  J_{x-\evec_1}I_{x-\evec_2}^{-1}+1 \bigr) . 
 \ee  
 
%
%


Let $u\le v$ on $\Z^2$.  On the boundary of the quadrant $v+\Z_{\ge0}^2$, put ratio  weights  of the partition functions with base point $u$: 
\[  Y^{(u)}_{v+i\evec_r}  =  \frac{Z_{u, v+i\evec_r}}{Z_{u, v+(i-1)\evec_r}}
\qquad \text{for } \ r\in\{1,2\} \ \text{ and }\    i\ge 1.   
\] 
The ratio weights dominate the original weights: $Y^{(u)}_{v+i\evec_r}\ge Y_{v+i\evec_r} $, 
and equality  holds iff $v=u+m\evec_r$ for some $m\ge 0$.

 Define a partition function  $Z^{(u)}_{v,w}$ that uses these boundary weights and ignores the first weight of the path: for $k,\ell\ge 1$ and $w\in v+\Z_{>0}^2$, 
 \begin{align*}  
  Z^{(u)}_{v,v}&=1, \quad   Z^{(u)}_{v,v+k\evec_1} =  \prod_{i=1}^k Y^{(u)}_{v+i\evec_1}, 
  \quad   Z^{(u)}_{v,v+\ell\evec_2} =  \prod_{j=1}^\ell Y^{(u)}_{v+j\evec_2}\\
   Z^{(u)}_{v,w}  &=\sum_{k=1}^{w_1-v_1} \biggl(\; \prod_{i=1}^k Y^{(u)}_{v+i\evec_1} \biggr) Z_{v+k\evec_1+\evec_2, w} 
 +  \sum_{\ell=1}^{w_2-v_2}  \biggl(\; \prod_{j=1}^\ell Y^{(u)}_{v+j\evec_2} \biggr) Z_{v+\evec_1+\ell\evec_2, w} . 
 \end{align*} 
For $w\in v+\Z_{>0}^2$  the definition from above can be rewritten as follows:
\begin{align*}
 Z^{(u)}_{v,w} 
& =\frac1{Z_{u,v}} \sum_{k=1}^{w_1-v_1}  Z_{u, v+k\evec_1} Z_{v+k\evec_1+\evec_2, w} 
 +  \frac1{Z_{u,v}} \sum_{\ell=1}^{w_2-v_2} Z_{u,v+\ell\evec_2} Z_{v+\evec_1+\ell\evec_2, w} . 
\end{align*} 
and thus for all   $u\le v\le w$ we have the identity  
\be\label{m:820}  Z^{(u)}_{v,w} = \frac{Z_{u,w}}{Z_{u,v}}. 
 \ee
	
Ratio variables  satisfy 	
\be\label{m:824} 
 I_{u,x}=\frac{Z_{u, x}}{Z_{u, x-\evec_1}} =\frac{Z_{u,v} \, Z^{(u)}_{v,x}}{Z_{u,v} \, Z^{(u)}_{v,x-\evec_1}}
 =\frac{Z^{(u)}_{v,x}}{Z^{(u)}_{v,x-\evec_1}}
 = I^{(u)}_{v,x}
 \ee
 with the analogous identity $J_{u,x}=J^{(u)}_{v,x}$.  
 
Recall the definition \eqref{exit77} of $\ex_{u,v,w}$.  Let $Q^{(u)}_{v,w}$ be the   quenched path probability on $\pathsp_{v,w}$  that corresponds to the partition function $Z^{(u)}_{v,w}$.  Then  we have the identity 
 \be\label{m:830}
 Q_{u,w}(\ex_{u,v,w}=\ell) =  Q^{(u)}_{v,w}(\ex_{v,w}=\ell)
 \qquad\text{for } \ 0\ne\ell\in\Z. 
 \ee
 Here is the derivation for the case where the path from $u$ to $w$ goes above $v$. Let $k\ge 1$. Apply \eqref{m:820} and \eqref{m:824}.  
 \begin{align*}
  Q_{u,w}(\ex_{u,v,w}=-k) &= \frac{Z_{u, v+k\evec_2} Z_{v+\evec_1+k\evec_2, w}}{Z_{u,w}}
  = \frac{Z_{u,v} \bigl( \,\prod_{j=1}^k J_{u, v+j\evec_2}\bigr)  Z_{v+\evec_1+k\evec_2, w}}{Z_{u,w}}\\
 &= \frac{ \bigl( \,\prod_{j=1}^k J^{(u)}_{v, v+j\evec_2}\bigr)  Z_{v+\evec_1+k\evec_2, w}}{Z^{(u)}_{v,w}}   
 =  Q^{(u)}_{v,w}(\ex_{v,w}=-k).  
 \end{align*}

\subsection{Inequalities for point-to-point partition functions} 

We state several inequalities that follow from the next basic lemma.  The inequalities in  \eqref{m:770}   below are proved together by induction on $x$  and $y$, 
beginning with $x=u+k\evec_1$ and $y=u+\ell\evec_2$.    The induction step is carried out by formulas \eqref{m:NE}.

 \begin{lemma}  Fix a base point $u$.  Let $\{\Yw^{(1)}_x\}$ and $\{\Yw^{(2)}_x\}$ be strictly positive weights from which partition functions $Z^{(1)}_{u,v}$ and $Z^{(2)}_{u,v}$ are defined. Assume that  $Y^{(1)}_u=\Yw^{(2)}_u$, $Y^{(1)}_{u+k\evec_1}\le \Yw^{(2)}_{u+k\evec_1}$,  $\Yw^{(2)}_{u+\ell\evec_2}\le  Y^{(1)}_{u+\ell\evec_2}$ and $Y^{(1)}_x= \Yw^{(2)}_x$ for all $k,\ell\ge 1$  and $x\in u+\Z_{>0}^2$.  Then we have the following inequalities
 for  $x\ge u+\evec_1$ and $y\ge u+\evec_2$:   
 \be\label{m:770}    \frac{Z^{(1)}_{u,x}}{Z^{(1)}_{u,x-\evec_1}} \le \frac{Z^{(2)}_{u,x}}{Z^{(2)}_{u,x-\evec_1}} 
 \qquad\text{and}\qquad 
 \frac{Z^{(2)}_{u,y}}{Z^{(2)}_{u,y-\evec_2}}\le  \frac{Z^{(1)}_{u,y}}{Z^{(1)}_{u,y-\evec_2}} . 
  \ee
 \end{lemma}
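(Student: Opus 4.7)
The plan is to prove both inequalities in \eqref{m:770} simultaneously, by induction over the antidiagonals of the quadrant $u+\Z_{\ge 0}^2$, using the local recursion \eqref{m:NE}. Set $I^{(i)}_x = Z^{(i)}_{u,x}/Z^{(i)}_{u,x-\evec_1}$ and $J^{(i)}_x = Z^{(i)}_{u,x}/Z^{(i)}_{u,x-\evec_2}$, and carry as inductive hypothesis the pair of inequalities $I^{(1)}_x \le I^{(2)}_x$ and $J^{(2)}_x \le J^{(1)}_x$, at every vertex $x$ where the relevant ratio is defined.

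For the base case I handle the two axes. On the horizontal axis $x=u+k\evec_1$ with $k\ge 1$, $I^{(i)}_x = Y^{(i)}_{u+k\evec_1}$, and the assumption $Y^{(1)}_{u+k\evec_1}\le Y^{(2)}_{u+k\evec_1}$ gives $I^{(1)}_x\le I^{(2)}_x$. Symmetrically, on the vertical axis $J^{(i)}_{u+\ell\evec_2} = Y^{(i)}_{u+\ell\evec_2}$, and the reversed weight inequality gives $J^{(2)}\le J^{(1)}$ there.

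For the inductive step at an interior vertex $x\in u+\Z_{>0}^2$, the bulk-weight equality $Y^{(1)}_x = Y^{(2)}_x =: Y_x$ lets me apply \eqref{m:NE} with a common multiplier:
\[
I^{(i)}_x = Y_x\bigl(1 + I^{(i)}_{x-\evec_2}\,(J^{(i)}_{x-\evec_1})^{-1}\bigr),
\qquad
J^{(i)}_x = Y_x\bigl(J^{(i)}_{x-\evec_1}\,(I^{(i)}_{x-\evec_2})^{-1} + 1\bigr).
\]
The update map for $I^{(i)}_x$ is increasing in $I^{(i)}_{x-\evec_2}$ and decreasing in $J^{(i)}_{x-\evec_1}$, while the one for $J^{(i)}_x$ has the opposite monotonicities. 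The two predecessors $x-\evec_1$ and $x-\evec_2$ lie on a strictly lower antidiagonal (or on an axis already handled by the base case), so the inductive hypothesis supplies $I^{(1)}\le I^{(2)}$ and $J^{(1)}\ge J^{(2)}$ there. Plugging these into the two update maps yields both $I^{(1)}_x\le I^{(2)}_x$ and $J^{(2)}_x\le J^{(1)}_x$, closing the induction.

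The one conceptual point worth emphasizing, and the only genuine obstacle, is that the two inequalities must be carried through \emph{jointly}: the update for $I$ depends on a $J$ from the previous antidiagonal and vice versa, and it is precisely the opposing sign of the $I$- and $J$-monotonicities (forced by the opposite orientations of the horizontal and vertical boundary perturbations in the hypothesis) that lets the coupled induction close up. Neither inequality would propagate on its own.
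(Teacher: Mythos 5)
Your proof is correct and is precisely the argument the paper sketches: joint induction on both ratio inequalities with base case on the axes, closed at interior vertices via the coupled recursion \eqref{m:NE}, where the opposite monotonicities of the $I$- and $J$-updates are what let the two inequalities propagate together. Organizing the induction over antidiagonals is just a concrete choice of the same coordinatewise induction the paper refers to.
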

 
 From the lemma we obtain the following pair of inequalities:
 		\be\label{ine2}\begin{aligned}
			\frac{Z_{u,z}}{Z_{u,z-\evec_1}}\leq \frac{Z_{u+\evec_1,z}}{Z_{u+\evec_1,z-\evec_1}}   \qquad\text{and}\qquad  
			\frac{Z_{u,z}}{Z_{u,z-\evec_1}}\leq \frac{Z_{u-\evec_2,z}}{Z_{u-\evec_2,z-\evec_1}} . 
		\end{aligned}\ee   
The first inequality above follows from the first inequality of \eqref{m:770} by letting the weights $\{Y_{u+j\evec_2}\}_{j\ge 1}$ tend to zero, and the second one by letting the weights $\{Y_{u-\evec_2+i\evec_1}\}_{i\ge 1}$ tend to zero.


\begin{lemma}\label{lem:cl}
	Let $x,y,z\in\Z^2$ be such that $x\og y$ and $x,y\leq z-\evec_1-\evec_2$. We then have
	\begin{align}
		&\frac{Z_{x,z}}{Z_{x,z-\evec_1}}\leq \frac{Z_{y,z}}{Z_{y,z-\evec_1}}\label{in1}\\
		& \frac{Z_{y,z}}{Z_{y,z-\evec_2}}\leq\frac{Z_{x,z}}{Z_{x,z-\evec_2}}\label{in2}.
	\end{align}
	\end{lemma}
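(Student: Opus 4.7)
The plan is to read (\ref{in1}) and (\ref{in2}) as monotonicity of the two ratio weights $Z_{u,z}/Z_{u,z-\evec_r}$ in the base point $u$, under shifts of $u$ in the down-right direction, and then to iterate (\ref{ine2}) (together with an $\evec_2$-analogue) along a down-right lattice path that joins $x$ to $y$.

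More concretely, because $x\preccurlyeq y$ we can write $y-x=a\evec_1-b\evec_2$ with $a=y_1-x_1\ge 0$ and $b=x_2-y_2\ge 0$, and pick any sequence $x=u^{(0)},u^{(1)},\dotsc,u^{(a+b)}=y$ whose increments $u^{(k+1)}-u^{(k)}$ all lie in $\{\evec_1,-\evec_2\}$. The hypothesis $x,y\le z-\evec_1-\evec_2$ forces $u^{(k)}_1\le y_1\le z_1-1$ and $u^{(k)}_2\le x_2\le z_2-1$ for every $k$, so each intermediate base point still satisfies $u^{(k)}\le z-\evec_1-\evec_2$ and all the ratios $Z_{u^{(k)},z}/Z_{u^{(k)},z-\evec_r}$ are well defined and strictly positive. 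A straightforward telescoping then reduces (\ref{in1}) to the two single-step inequalities
\[
\frac{Z_{u,z}}{Z_{u,z-\evec_1}}\;\le\;\frac{Z_{u+\evec_1,z}}{Z_{u+\evec_1,z-\evec_1}},\qquad \frac{Z_{u,z}}{Z_{u,z-\evec_1}}\;\le\;\frac{Z_{u-\evec_2,z}}{Z_{u-\evec_2,z-\evec_1}},
\]
which are exactly the two inequalities of (\ref{ine2}); and reduces (\ref{in2}) to the reversed pair
\[
\frac{Z_{u+\evec_1,z}}{Z_{u+\evec_1,z-\evec_2}}\;\le\;\frac{Z_{u,z}}{Z_{u,z-\evec_2}},\qquad \frac{Z_{u-\evec_2,z}}{Z_{u-\evec_2,z-\evec_2}}\;\le\;\frac{Z_{u,z}}{Z_{u,z-\evec_2}}.
\]

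The second pair is not stated in the paper, but follows by exactly the same limiting procedure used to derive (\ref{ine2}), applied now to the second inequality in (\ref{m:770}) rather than the first. To obtain the left-hand inequality above, take $Z^{(1)}=Z_{u,\cdot}$ with the original weights and let the weights $\{Y_{u+j\evec_2}\}_{j\ge 1}$ shrink to zero in $Z^{(2)}$, so that $Z^{(2)}_{u,\cdot}$ is forced to realize paths that leave $u$ via $\evec_1$ and is therefore a common multiple $Y_u Z_{u+\evec_1,\cdot}$; the factor $Y_u$ cancels when one divides. The hypotheses of (\ref{m:770}) are satisfied with equality on the $\evec_1$ axis and the bulk, and inequality on the $\evec_2$ axis in the required direction. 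The right-hand inequality is obtained in the same way based at $u-\evec_2$, this time sending the horizontal boundary weights $\{Y_{u-\evec_2+i\evec_1}\}_{i\ge 1}$ to zero, which realizes $Y_{u-\evec_2}Z_{u,\cdot}$ as a modified partition function based at $u-\evec_2$.

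The main obstacle is purely bookkeeping: one must choose which of the two axes to suppress so that the hypothesis of (\ref{m:770}) lines up with the direction of the desired inequality, and one must verify that every intermediate base point in the telescoping path keeps all relevant partition functions nonzero. Both points are routine consequences of $x\preccurlyeq y$ and $x,y\le z-\evec_1-\evec_2$, so no further input beyond (\ref{m:770}) and its immediate corollary (\ref{ine2}) is needed.
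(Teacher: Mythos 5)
Your proof is correct, and for \eqref{in1} it is exactly the paper's argument: telescope the single-step inequalities \eqref{ine2} along a down-right staircase from $x$ to $y$, using $x\og y$ and $x,y\le z-\evec_1-\evec_2$ to keep every intermediate base point $u^{(k)}$ in the admissible region. For \eqref{in2} the paper's proof is a one-liner: it observes that \eqref{in2} is the transpose of \eqref{in1} under the coordinate swap $(a,b)\mapsto(b,a)$, which interchanges $\evec_1\leftrightarrow\evec_2$ and reverses $\og$, so no new estimate is needed. You instead re-derive the $\evec_2$-analogues of \eqref{ine2} directly from the second inequality of \eqref{m:770} by the same weight-degeneration limit that produced \eqref{ine2} (shrinking the appropriate boundary weights to zero to force the first step), and then telescope those. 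Both routes are sound and rest on the same Lemma \eqref{m:770}; the paper's transposition is shorter, while your explicit derivation is more self-contained and incidentally confirms that the degeneration trick behind \eqref{ine2} works symmetrically for both coordinates.
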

	\begin{proof}
		   \eqref{in1} follows from repeated application of  \eqref{ine2} along the steps $\evec_1$ and $-\evec_2$ from $x$ to $y$.  Transposing \eqref{in1}  gives \eqref{in2}. \end{proof}

Since $u+k\evec_1 \succcurlyeq u$ and $u+\ell\evec_2\preccurlyeq u$ for $k,\ell\ge0$,  inequalities \eqref{in1}--\eqref{in2} imply also these  for $1\le k< (x-u)\cdot\evec_1$ and $1\le\ell< (y-u)\cdot\evec_2$: 
\be\label{m:772} 
\frac{Z_{u,x}}{Z_{u,x-\evec_1}} 
 \le  \frac{Z_{u,x}(\ex_{u,x}\ge k)}{Z_{u,x-\evec_1}(\ex_{u,x-\evec_1}\ge k)}
 \quad\text{ and }\quad 
\frac{Z_{u,y}}{Z_{u,y-\evec_2}}  \le  \frac{Z_{u,y}(\ex_{u,y}\le-\ell)}{Z_{u,y-\evec_2}(\ex_{u,y-\evec_2}\le-\ell)}. 
  \ee
To illustrate the explicit proof of the first one: 
 \[    \frac{Z_{u,x}(\ex_{u,x}\ge k)}{Z_{u,x-\evec_1}(\ex_{u,x-\evec_1}\ge k)} 
 = \frac{ (\,\prod_{i=0}^{k-1} \Yw_{u+i\evec_1}) \, Z_{u+k\evec_1,x}}{(\,\prod_{i=0}^{k-1} \Yw_{u+i\evec_1}) \,Z_{u+k\evec_1,x-\evec_1}}
= \frac{Z_{u+k\evec_1,x}}{Z_{u+k\evec_1,x-\evec_1}}
\ge   \frac{Z_{u,x}}{Z_{u,x-\evec_1}}. 
 \] 

\smallskip

\subsection{Ordering of path measures}\label{sec:p-ord} 

The down-right partial order $\og$ on $\R^2$ and $\Z^2$ was defined by 
 $(x_1,x_2)\preccurlyeq(y_1,y_2)$  if $x_1\le y_1$ and $x_2\ge y_2$.  
Extend this relation  to pairs of vertices 
 $(x^1,y^1),(x^2,y^2)\in\Z^2\times\Z^2$ as follows (illustrated in Figure \ref{fig:opoints}):
 \be\label{def:ogg} 
 (x^1,y^1)\og(x^2,y^2) \quad\text{ if } x^1\leq y^1, \ x^2\leq y^2 , \ x^1\og x^2 \text{ and } y^1\og y^2. 
 \ee
 Extend this relation further to finite paths:  $\pi^1\in\pathsp_{x^1,y^1}$ and $\pi^2\in\pathsp_{x^2,y^2}$ satisfy $\pi^1\og\pi^2$ if the pairs of endpoints satisfy $(x^1,y^1)\og(x^2,y^2)$ and  whenever $z^1\in\pi^1$, $z^2\in\pi^2$, and $z^1\cdot(\evec_1+\evec_2)=z^2\cdot(\evec_1+\evec_2)$, we have $z^1\og z^2$.  Pictorially, in a very clear sense, $\pi^1$ lies (weakly)  above and to the left of $\pi^2$.  See  again Figure \ref{fig:opoints}.

\begin{figure}[t]
	\begin{center}
		\begin{subfigure}{.3\textwidth}
		\begin{tikzpicture}[scale=0.65, every node/.style={transform shape}]
		node/.style={transform shape}]
		\def\s{1.33}
		\node [scale=\s,below] at (0,2) {$x^1$};
		\fill (0,2) circle (3pt);
		\node [scale=\s,right] at (3,5) {$y^1$};
		\fill (3,5) circle (3pt);
		\node [scale=\s,right] at (2,0) {$x^2$};
		\fill (2,0) circle (3pt);
		\node [scale=\s,right] at (5,3) {$y^2$};
		\fill (5,3) circle (3pt);	
		\draw [line width=0.4mm] plot [smooth, tension=0.8] coordinates { (0,2) (1.7,2.8) (3,5) };
		\node [scale=\s,left] at (2.1,3.5) {$\pi^1$};
		\draw [line width=0.4mm] plot [smooth, tension=0.8] coordinates { (2,0) (3.5,2) (5,3) };
		\node [scale=\s,right] at (3.5,1.8) {$\pi^2$};
		\end{tikzpicture}
		\end{subfigure}%
		\begin{subfigure}{.3\textwidth}
			\begin{tikzpicture}[scale=0.65, every node/.style={transform shape}]
			node/.style={transform shape}]
			\def\s{1.33}
			\node [scale=\s,below] at (0,1) {$x^1$};
			\fill (0,1) circle (3pt);
			\node [scale=\s,right] at (4,3) {$y^1$};
			\fill (4,3) circle (3pt);
			\node [scale=\s,below] at (0,0) {$x^2$};
			\fill (0,0) circle (3pt);
			\node [scale=\s,right] at (3,4) {$y^2$};
			\fill (3,4) circle (3pt);	
			\draw [line width=0.4mm] plot [smooth, tension=0.8] coordinates { (0,1) (1.7,2.8) (4,3) };
			\node [scale=\s,left] at (1.8,3.1) {$\pi^1$};
			\draw [line width=0.4mm] plot [smooth, tension=0.8] coordinates { (0,0) (2,1) (3,4) };
			\node [scale=\s,right] at (2.5,1.7) {$\pi^2$};
			\end{tikzpicture}
		\end{subfigure}%
	\end{center}
	\caption{\small On the left the pairs $(x^1,y^1)$ and $(x^2,y^2)$ satisfy $(x^1,y^1)\og (x^2,y^2)$, while on the right this relation fails. Consistently with this, on the left the paths $\pi^1\in\pathsp_{x^1,y^1}$ and $\pi^2\in\pathsp_{x^2,y^2}$ satisfy $\pi^1\og\pi^2$ but on the right this fails.}
\label{fig:opoints}
\end{figure}

Let $\mu$ and $\nu$ be probability measures on the finite path spaces  $\pathsp_{x^1,y^1}$ and $\pathsp_{x^2,y^2}$,  respectively. We write $\mu\og \nu$ if there exist random paths $X^1\in\pathsp_{x^1,y^1}$ and $X^2\in\pathsp_{x^2,y^2}$ on a common probability space such that $X^1\sim \mu$, $X^2\sim \nu$, and $X^1\og X^2$. In other words, $\mu\og \nu$ if $\nu$ stochastically dominates $\mu$ under the partial order  $\og$ on paths. The following shows that for fixed weights  there exists a coupling of all  the  quenched polymer distributions $\{Q_{x,y}\}_{x\le y}$  on the lattice $\Z^2$ so that   $Q_{x,y}\og Q_{u,v}$ whenever $(x,y)\og (u,v)$.

\begin{lemma}\label{lem:opm}
	Let $(Y_x)_{x\tspa\in\tspa\Z^2}$ be an assignment of strictly positive weights on the lattice $\Z^2$.  Then 
	there exists a coupling of up-right random paths  $\{\pi^{x,y}\}_{x\le y}$  such that  $\pi^{x,y}\in\pathsp_{x,y}$,  $\pi^{x,y}$ has the quenched polymer distribution  $Q_{x,y}$,  and  $\pi^{x,y}\og \pi^{u,v}$ whenever    $(x,y)\og(u,v)$. 
\end{lemma}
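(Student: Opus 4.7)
The plan is to build a simultaneous monotone coupling by sampling every quenched polymer as a Markov chain driven by a single noise process indexed by anti-diagonal level.

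First, introduce independent $\mathrm{Uniform}[0,1]$ random variables $\{U_n\}_{n \in \Z}$, one per anti-diagonal level. For each admissible pair $x \le y$ in $\Z^2$ and each reachable vertex $z \ne y$, define the quenched up-probability
\[
\alpha(z; y) = \frac{Z_{z + \evec_2,\, y}}{Z_{z + \evec_1,\, y} + Z_{z + \evec_2,\, y}},
\]
with the conventions $\alpha(z; y) = 0$ if $z + \evec_2 \not\le y$ and $\alpha(z; y) = 1$ if $z + \evec_1 \not\le y$. Build $\pi^{x, y}$ inductively from $x$: whenever the path is at $z \ne y$ on level $n = z \cdot (\evec_1 + \evec_2)$, take the step $\evec_2$ if $U_n \le \alpha(z; y)$ and $\evec_1$ otherwise. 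Any single path visits each level at most once, so its successive choices employ distinct and hence independent $U_n$'s, and the Markov representation \eqref{h:Q5} immediately gives $\pi^{x, y} \sim Q_{x, y}$. A single realization of $\{U_n\}$ produces all paths simultaneously.

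To check the ordering, fix $(x^1, y^1) \og (x^2, y^2)$ and induct on the anti-diagonal level $k$ through the common range $[\max(m^1, m^2),\, \min(n^1, n^2)]$, where $m^i = x^i \cdot (\evec_1 + \evec_2)$ and $n^i = y^i \cdot (\evec_1 + \evec_2)$. A routine coordinate count gives $\pi^1_k \og \pi^2_k$ deterministically at the first common level. For the inductive step, note that two vertices on the same anti-diagonal related by $\og$ take the form $\pi^1_k = \pi^2_k + j(\evec_2 - \evec_1)$ for some integer $j \ge 0$. When $j \ge 1$ the two vertices have a strict horizontal gap and direct inspection shows that any of the four combinations of $\evec_1, \evec_2$ steps preserves $\og$ at level $k + 1$. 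When $j = 0$ the two paths sit at the same vertex $z$ and consult the same $U_k$ against thresholds $\alpha(z; y^1)$ and $\alpha(z; y^2)$; the only combination that could break $\og$ is ``$\pi^1 \to z + \evec_1$, $\pi^2 \to z + \evec_2$'', which happens exactly on $\alpha(z; y^1) < U_k \le \alpha(z; y^2)$.

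The whole argument therefore reduces to the single monotonicity claim
\[
\alpha(z; y^1) \ge \alpha(z; y^2), \qquad \text{whenever } y^1 \og y^2 \text{ and } z \le y^1 \wedge y^2,
\]
equivalently $Z_{z + \evec_1,\, y^1}/Z_{z + \evec_2,\, y^1} \le Z_{z + \evec_1,\, y^2}/Z_{z + \evec_2,\, y^2}$. This is the endpoint-varying mirror of Lemma \ref{lem:cl}, and is the only piece not already present in the excerpt; I expect it to be the main, though modest, obstacle. It follows from the same one-step induction used to derive Lemma \ref{lem:cl} once \eqref{ine2} is reformulated as a perturbation of the endpoint ($y \to y + \evec_1$ or $y \to y - \evec_2$) rather than of the starting point, or alternatively by reflecting the lattice through a point, which swaps starting points with endpoints and leaves the partition-function recursion \eqref{m:NE} invariant. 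The boundary cases $\alpha \in \{0, 1\}$ are automatic because the reachability conditions $z \le y^i$ together with $y^1 \og y^2$ force consistent degeneracies.
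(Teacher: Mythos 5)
Your proposal is correct, and it is genuinely the mirror image of the paper's construction rather than the same one. The paper builds each path \emph{backward} from the far endpoint down to the base point: with uniforms $\{U_z\}$ indexed by \emph{vertex}, it defines at each $z$ a down-left step $V^x(z)\in\{-\evec_1,-\evec_2\}$ whose threshold $Y_zZ_{x,z-\evec_1}/Z_{x,z}$ depends only on the \emph{starting} point $x$; all paths into a fixed $x$ then lie on a spanning tree $\cT^x$, and the order-preservation is exactly Lemma \ref{lem:cl} applied to varying start points. You instead build each path \emph{forward}, with uniforms indexed by anti-diagonal \emph{level} and an up-probability $\alpha(z;y)$ depending only on the \emph{endpoint} $y$; the base-level case and the $j\ge1$ and $j=0$ case analyses are all correct (note on a fixed level $\og$ reduces to comparison of first coordinates, which is what makes the base case a ``routine coordinate count''). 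The one step you flag as a possible obstacle does close cleanly: by reflecting the lattice through a point, $Z_{z+\evec_r,y}=\wt Z_{-y,-z-\evec_r}$ with reversed weights, and $y^1\og y^2$ becomes $-y^2\og -y^1$; dividing the two displayed inequalities of Lemma \ref{lem:cl} (applied to the reflected partition functions with $\tilde w=-z$) gives $\wt Z_{-y^1,\tilde w-\evec_1}/\wt Z_{-y^1,\tilde w-\evec_2}\le \wt Z_{-y^2,\tilde w-\evec_1}/\wt Z_{-y^2,\tilde w-\evec_2}$, which is exactly $Z_{z+\evec_1,y^1}/Z_{z+\evec_2,y^1}\le Z_{z+\evec_1,y^2}/Z_{z+\evec_2,y^2}$, hence $\alpha(z;y^1)\ge\alpha(z;y^2)$. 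One minor design difference is your level-indexed noise versus the paper's vertex-indexed noise; either works here (for a single path the uniforms used are distinct either way, and for the coupling the $j\ge1$ case is deterministic), though vertex-indexing would additionally make all paths sharing a common endpoint coalesce as soon as they meet, matching the paper's tree picture after reflection.
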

\begin{proof}
  Let $\{U_z\}_{z\in\Z^2}$ be an assignment of i.i.d.\ uniform random variables $U_z\sim\text{\rm Unif}(0,1)$ to the vertices of $\Z^2$, defined under some probability measure $\mP$. 
    For each pair $x\le z$ such that $x\ne z$, define the down-left pointing random unit vector 
\be\label{V6}   V^x(z)= \begin{cases}  
-\evec_1, &\text{if }  \ \   \dfrac{Y_zZ_{x,z-\evec_1}}{Z_{x,z}} \ge U_z \\[5pt]  
-\evec_2, &\text{if }  \ \   \dfrac{Y_zZ_{x,z-\evec_2}}{Z_{x,z}} > 1- U_z. 
\end{cases}  
\ee  
If $z=x+k\evec_i$  this gives $V^x(z)=-\evec_i$ due to the convention $Z_{u,v}=0$ when $u\le v$ fails.  
Hence any path that  starts at some vertex $y\ge x$ distinct from $x$ and follows  the steps from each $z$ to  $z+V^x(z)$ terminates at $x$. 

	
Since the paths from distinct points that follow increments $V^x(z)$ for a given $x$  eventually coalesce, a realization of $\{V^x(z)\}_{z\ge x: \,z\ne x}$ defines a spanning tree $\mathcal{T}^x$ rooted at $x$  on the nearest-neighbor graph on the quadrant   $x+\Z_{\ge 0}^2$.
	    For  $x\le y$ let $\pi^{x,y}\in\pathsp_{x,y}$ be the path that connects $x$ and $y$ in the tree $\cT^x$.  Then for any path $x_\bbullet\in\pathsp_{x,y}$,  \eqref{V6} implies that 
$ 
	 	Q_{x,y}(x_\bbullet)=\mP(\pi^{x,y}=x_\bbullet) .
	$  
	 In other words, through the random paths $\{\pi^{x,y}\}_{x\le y}$ we have a coupling of the quenched polymer  distributions $\{Q_{x,y}\}_{x\le y}$.

	 Let $x\og u$. By Lemma \ref{lem:cl}
	 \begin{align*}
\frac{Y_zZ_{x,z-\evec_1}}{Z_{x,z}}\geq\frac{Y_zZ_{u,z-\evec_1}}{Z_{u,z}} 
\quad\text{ and }\quad
	 \frac{Y_zZ_{u,z-\evec_1}}{Z_{u,z}}\geq\frac{Y_zZ_{x,z-\evec_2}}{Z_{x,z}}.
	 \end{align*}
Hence 
	\be\label{ine5} \begin{aligned}
	 	\{V^x(z)=-\evec_2\}&\subseteq \{V^u(z)=-\evec_2\}\\
	 \text{and}\qquad 	\{V^u(z)=-\evec_1\}&\subseteq \{V^x(z)=-\evec_1\}.
	 \end{aligned}\ee
	 It follows from \eqref{ine5} that two paths satisfy $\pi^{x,y}\og \pi^{u,v}$  whenever $(x,y)\og(u,v)$. 	 This is because if these paths share a vertex $z$, then their subsequent down-left steps satisfy  $z+V^x(z)\og z+V^u(z)$.  
\end{proof}

Let $o\le x$.  In the tree $\cT^o$  constructed above, the path from $x$ down  to $o$ stays weakly   to the left of the path from $x+\evec_1$ down  to $o$. This gives the inequality  below: 
\be\label{pmon}  
\text{for  vertices }    o\leq x \text{ and } k\ge 1,   \quad Q_{o,x}(t_{\evec_1}\ge k)\leq Q_{o,x+\evec_1}(t_{\evec_1}\ge k).
\ee


\smallskip 

\subsection{Polymers on the upper half-plane} 
\label{app:pol-H}
The stationary inverse-gamma polymer process that is our   tool for calculations will be constructed   on  a half-plane. This section defines  the notational apparatus for this purpose, borrowed from  the forthcoming work \cite{fan-sepp-20+}.

Define  mappings of  bi-infinite sequences:  $I=(I_k)_{k\tsp\in\tsp\Z}$ and  $\Yw=(\Yw_j)_{j\in\Z}$ in $\R_{>0}^\Z$ that  are assumed to  satisfy 
	\begin{equation}\label{Iw}
	C(I, \Yw)= \lim_{m\to\,-\infty}  \sum_{j=m}^0  {\Yw_{j}} \prod_{i=j+1}^{0}\frac{\Yw_i}{I_i}<\infty.
	\end{equation}
	From these inputs, three outputs  $\wt I=(\wt I_k)_{k\tsp\in\tsp\Z}$, $J=(J_k)_{k\tsp\in\tsp\Z}$ and $\wt\Yw=(\wt\Yw_k)_{k\tsp\in\tsp\Z}$,   also elements of $\R_{>0}^\Z$,  are constructed  as follows. 
	
	Let $Z=(Z_k)_{k\tsp\in\tsp\Z}$ be any function on $\Z$ that satisfies $I_k=Z_{k}/Z_{k-1}$.   This defines $Z$ up to a  positive multiplicative constant.   Define the sequence  $\wt Z=(\wt Z_\ell)_{\ell\in\Z}$ by 
	\be\label{m:800}
	\wt Z_{\ell}=\sum_{k:\,k\le \ell}    Z_k\,\prod_{i=k}^{\ell} \Yw_i \,,  
	\quad \ell\in\Z. 
	\ee
	Under assumption \eqref{Iw} the sum on the right-hand side of \eqref{m:800} is finite.   To check this choose a particular  $Z$ by setting $Z_0=1$.  (Any other admissible $Z$ is a constant multiple of this one.)   Then 
	$Z_k=\prod_{i=k+1}^0 I_i^{-1}$ for $k\le -1$. 
\begin{align*}	
	\wt Z_{\ell}&=\sum_{k:\,k\le \ell\wedge 0}    Z_k\,\prod_{i=k}^{\ell} \Yw_i 
	+  \sum_{k:\, 1\le k\le\ell}    Z_k\,\prod_{i=k}^{\ell} \Yw_i  \\
	&=\sum_{k:\,k\le \ell\wedge 0}   \biggl( \;\prod_{i=k+1}^0 I_i^{-1}\biggr) \biggl( \; \prod_{i=k}^{0} \Yw_i \biggr)  C_\ell(Y)  
	+  \sum_{k:\, 1\le k\le\ell}    Z_k\,\prod_{i=k}^{\ell} \Yw_i  \\
	&\le C(I, \Yw) C_\ell(Y)  
	+  \sum_{k:\, 1\le k\le\ell}    Z_k\,\prod_{i=k}^{\ell} \Yw_i <\infty \,.  
\end{align*}

  For $k\in\Z$ define 
	\be\label{m:801}  \wt I_k =    \wt Z_k/\wt Z_{k-1},  \ee 
	\be\label{m:J} J_k=\wt Z_k/Z_k , 
	 \ee
	\be\label{m:R}   \wt \Yw_k=(I_k^{-1}+ J_{k-1}^{-1})^{-1}. 
	 \ee  
	The sequences $\wt I$, $J$ and $\wt\Yw$ are well-defined positive real sequences,  and they do not  depend  on the choice of the function $Z$ as long as $Z$  has  ratios  $I_k=Z_{k}/Z_{k-1}$.  
	The  three mappings are denoted by  
	\be\label{m:DSR}   \wt I=\Dop(I,\Yw),  \quad J=\Sop(I,\Yw), \quad\text{and} \quad \wt \Yw=\Rop(I,\Yw). \ee
	
	Beginning from  $\wt Z_k = Y_k( Z_k + \wt Z_{k-1})$  we derive these  equations: 
%
%
	\begin{align}\label{m:801a}
	  \wt I_k &= 
  \Yw_k\biggl( \frac{I_k}{J_{k-1}}  + 1 \biggr) 
	= \frac{Y_k}{\wt Y_k} I_k \\
\label{m:Ja}   \text{and }\qquad 	 J_k&=
	 \Yw_k\biggl(1+  \frac{J_{k-1}}{I_k}   \biggr)
	 = \frac{Y_k}{\wt Y_k} J_{k-1}.  
	 \end{align} 

The last formula iterates as follows: for $\ell<m$,
\be\label{J34} 
J_m= J_\ell \prod_{i=\ell+1}^m \frac{\Yw_i}{I_i} + \sum_{j=\ell+1}^m \Yw_j \prod_{i=j+1}^m \frac{\Yw_i}{I_i}. 
\ee	

We record two  inequalities.  From \eqref{m:801a}, 
\be\label{as} 
\wt I_j   \ge   Y_j.  
\ee
If we start with two coordinatewise ordered boundary weights  $I_j\le I'_j$ (for all $j$) and use the same bulk weights  $Y$ to compute vertical ratio  weights  $J=\Sop(I,Y)$ and $J'=\Sop(I',Y)$, the inequality is reversed: 
\be\label{tt}\begin{aligned} 
J'_k= \frac{\wt Z'_k}{Z'_k} =  \sum_{j:\,j\le k}    Y_j\,\prod_{i=j+1}^{k} \frac{Y_i}{I'_i}   \le  \sum_{j:\,j\le k}    Y_j\,\prod_{i=j+1}^{k} \frac{Y_i}{I_i}   =J_k.  
\end{aligned}\ee
 Further manipulation gives the next lemma. We omit the proof.

\begin{lemma}
To calculate $\{ \wt I_k, J_k, \wt\Yw_k: k\le m  \}$, we need  only the input $\{ I_k,\Yw_k: k\le m  \}$. 
\end{lemma}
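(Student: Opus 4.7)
The plan is to observe that every formula used to construct $\wt I_k$, $J_k$ and $\wt Y_k$ only reaches backwards (to smaller indices) never forwards, and then to walk through the three definitions one after the other to verify this.

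First I would fix the normalization $Z_0 = 1$, which determines $Z_k = \prod_{j=1}^k I_j$ for $k \ge 1$ and $Z_k = \prod_{j=k+1}^0 I_j^{-1}$ for $k \le -1$; in either case $Z_k$ depends only on $\{I_j : j \le k\}$ (and trivially $Z_0$ is a constant). Next I would examine the series \eqref{m:800}: the summation index runs over $k \le \ell$, and each summand involves $Z_k$ and the factors $Y_k, Y_{k+1}, \dots, Y_\ell$. Hence $\wt Z_\ell$ is a function of $\{I_j, Y_j : j \le \ell\}$ only. This is the single crucial observation; everything else is bookkeeping.

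Armed with this, the three outputs follow immediately. By \eqref{m:801}, $\wt I_k = \wt Z_k/\wt Z_{k-1}$ is a function of $\{I_j, Y_j : j \le k\}$. By \eqref{m:J}, $J_k = \wt Z_k/Z_k$ is a function of the same inputs. Finally, by \eqref{m:R}, $\wt Y_k = (I_k^{-1} + J_{k-1}^{-1})^{-1}$, and since $J_{k-1}$ only uses $\{I_j, Y_j : j \le k-1\}$ while $I_k$ is itself an input at index $k$, the quantity $\wt Y_k$ depends only on $\{I_j, Y_j : j \le k\}$. Taking $k \le m$ throughout gives the claim.

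There is essentially no obstacle here; the only subtle point to note is that $\wt Z_\ell$ is defined up to the multiplicative ambiguity in $Z$, so one must check that the ratios $\wt I_k = \wt Z_k/\wt Z_{k-1}$ and $J_k = \wt Z_k/Z_k$ are in fact independent of the normalization chosen for $Z$ — but this is already recorded in the paragraph introducing \eqref{m:DSR}, so it may just be cited rather than reproved.
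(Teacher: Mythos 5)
Your overall strategy is the right one — the statement is genuinely a bookkeeping claim, and the core observation that the formula \eqref{m:800} only reaches to smaller indices is exactly what makes it work. The paper in fact omits the proof entirely, so there is no alternative argument to compare against.

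However, there is a concrete error in the first step. With the normalization $Z_0 = 1$, for $k \le -1$ you have $Z_k = \prod_{j=k+1}^0 I_j^{-1}$, and the indices $k+1,\dotsc,0$ are all strictly \emph{greater} than $k$. So the assertion ``in either case $Z_k$ depends only on $\{I_j : j \le k\}$'' is false for negative $k$: for example, $Z_{-1} = I_0^{-1}$ depends on $I_0 \notin \{I_j : j \le -1\}$. This error propagates into the claim ``$\wt Z_\ell$ is a function of $\{I_j, Y_j : j \le \ell\}$ only,'' which is likewise false for $\ell < 0$ under your normalization: $\wt Z_\ell$ then carries a spurious factor $\prod_{j=\ell+1}^0 I_j^{-1}$. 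The conclusion survives because that factor cancels in the ratios $\wt Z_k/\wt Z_{k-1}$ and $\wt Z_k/Z_k$ — a cancellation you gesture at in your final remark on normalization-independence — but as written, the intermediate claims you use are wrong when $m < 0$.

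The cleanest fix is to normalize by $Z_m = 1$ rather than $Z_0 = 1$. Then for $k \le m$ one has $Z_k = \prod_{j=k+1}^m I_j^{-1}$, which depends only on $\{I_j : j \le m\}$; the series \eqref{m:800} then makes $\wt Z_\ell$ for $\ell \le m$ a function of $\{I_j, Y_j : j \le m\}$, and the three outputs follow as you describe. Alternatively, bypass $Z$ and $\wt Z$ entirely by quoting the closed form $J_k = \sum_{j \le k} Y_j \prod_{i=j+1}^k (Y_i/I_i)$ from \eqref{J36.4}, which manifestly uses only $\{I_j, Y_j : j \le k\}$; then $\wt Y_k = (I_k^{-1} + J_{k-1}^{-1})^{-1}$ from \eqref{m:R} and $\wt I_k = Y_k(I_k/J_{k-1} + 1)$ from \eqref{m:801a} inherit the same property, with no normalization choice to worry about.
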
 	

%

 
 The next lemma is nontrivial and we include a complete proof.  
  
\begin{lemma}\label{lm:DR}   The identity 
$\Dop\bigl(\Dop(A, I), Y\bigr) = \Dop\bigl( \Dop(A, \Rop(I, Y)), \Dop(I, Y)\bigr)$ holds whenever the sequences $I, A, Y$ are such that the operations are well-defined.  
\end{lemma}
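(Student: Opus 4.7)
The plan is to unfold both sides as explicit double sums and identify them via a telescoping identity. Fix representatives $F_k$ and $G_k$ with $I_k=F_k/F_{k-1}$ and $A_k=G_k/G_{k-1}$, and let $\wt F_\ell=\sum_{j\le\ell}F_j\prod_{i=j}^\ell Y_i$ as in \eqref{m:800}. Then $\wt I_j=\wt F_j/\wt F_{j-1}$, and the three algebraic facts I will use are: $\prod_{i=k}^\ell I_i=F_\ell/F_{k-1}$; the conservation law $\wt I_j\wt Y_j=I_jY_j$ from \eqref{m:801a}, equivalently $\wt Y_i=I_iY_i\,\wt F_{i-1}/\wt F_i$; and $F_\ell Y_\ell=\wt F_\ell-Y_\ell\wt F_{\ell-1}$, which is the recursion $\wt F_\ell=Y_\ell(F_\ell+\wt F_{\ell-1})$.

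For the left-hand side, $\Dop(\Dop(A,I),Y)_m=U_m/U_{m-1}$ with
\[
U_m=\sum_{\ell\le m}\sum_{k\le\ell}G_k\prod_{i=k}^\ell I_i\prod_{j=\ell}^m Y_j.
\]
Interchanging the sums, substituting $\prod_{i=k}^\ell I_i=F_\ell/F_{k-1}$, and splitting $\wt F_m$ at index $k-1$ rewrites this cleanly as
\[
U_m=\wt F_m\sum_{k\le m}\frac{G_k}{F_{k-1}}\;-\;\sum_{k\le m}\frac{G_k\,\wt F_{k-1}\prod_{i=k}^m Y_i}{F_{k-1}}.
\]
For the right-hand side, $\Dop(\Dop(A,\Rop(I,Y)),\Dop(I,Y))_m=\wh G_m/\wh G_{m-1}$ with $\wh G_m=\sum_{\ell\le m}\wb G_\ell\prod_{j=\ell}^m\wt I_j$ and $\wb G_\ell=\sum_{k\le\ell}G_k\prod_{i=k}^\ell\wt Y_i$. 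Substituting $\prod_{j=\ell}^m\wt I_j=\wt F_m/\wt F_{\ell-1}$ and $\prod_{i=k}^\ell\wt Y_i=(F_\ell/F_{k-1})(\wt F_{k-1}/\wt F_\ell)\prod_{i=k}^\ell Y_i$, then interchanging sums, gives
\[
\wh G_m=\wt F_m\sum_{k\le m}G_k\,\frac{\wt F_{k-1}}{F_{k-1}}\sum_{\ell=k}^m\frac{F_\ell}{\wt F_\ell\,\wt F_{\ell-1}}\prod_{i=k}^\ell Y_i.
\]

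The key step and, I expect, the main obstacle is spotting the telescoping in the inner $\ell$-sum. Using $F_\ell Y_\ell/(\wt F_\ell\wt F_{\ell-1})=1/\wt F_{\ell-1}-Y_\ell/\wt F_\ell$ and peeling off the factor $Y_\ell$ from $\prod_{i=k}^\ell Y_i$, each summand becomes $a_\ell-a_{\ell+1}$ with $a_\ell=\prod_{i=k}^{\ell-1}Y_i/\wt F_{\ell-1}$, so the sum collapses to $1/\wt F_{k-1}-\prod_{i=k}^m Y_i/\wt F_m$. Inserting this back into the displayed formula for $\wh G_m$ yields $\wh G_m=U_m$ identically, whence the two ratios agree and the identity follows. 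Convergence of the doubly infinite sums defining $U_m$ and $\wh G_m$ is guaranteed by the blanket assumption that all four operations in the statement are well-defined, i.e.\ that the analogue of \eqref{Iw} holds for each input pair entering a use of $\Dop$.
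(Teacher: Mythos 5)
Your proof is correct. Let me check the two ends. Unfolding both sides as double sums over $k\le\ell\le m$ and interchanging the order of summation is the same opening move the paper makes (with $F,G$ playing the roles of the paper's $Z,B$, and $\wt F$ playing $\wt Z$). You correctly compute $\sum_{\ell=k}^m F_\ell\prod_{j=\ell}^m Y_j=\wt F_m-\wt F_{k-1}\prod_{i=k}^m Y_i$, the conservation law $\wt Y_i=I_iY_i\,\wt F_{i-1}/\wt F_i$ (which is \eqref{m:801a} in ratio form), and the telescoping identity $F_\ell Y_\ell/(\wt F_\ell\wt F_{\ell-1})=1/\wt F_{\ell-1}-Y_\ell/\wt F_\ell$ coming from the recursion $\wt F_\ell=Y_\ell(F_\ell+\wt F_{\ell-1})$; your $a_\ell-a_{\ell+1}$ collapse is exactly right, and substituting back does give $\wh G_m=U_m$.

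Where you diverge from the paper is in how the inner sum is handled. The paper reduces to the pointwise identity \eqref{DR45} for fixed $k$ and proves it by induction on $m$, introducing auxiliary quantities $T_m$ and $\wt T_m$ obtained by adding an extra boundary term $J_{k-1}\prod_{j=k}^m Y_j$ on the left and $(\prod_{i=k}^m\wt Y_i)J_m$ on the right, then showing both satisfy the same one-step recursion $T_{m+1}=T_m\wt I_{m+1}$. In fact, the paper's $T_m$ collapses to exactly $\wt F_m/F_{k-1}$, which is what makes the recursion transparent, so the two proofs are converging on the same closed form. Your version replaces the induction with an explicit telescoping sum, which is arguably cleaner and makes the closed form visible at once, at the cost of slightly more upfront algebra in rewriting $\prod\wt Y_i$ in terms of $F,\wt F,Y$. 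Both are complete and equally rigorous; yours is a valid alternative derivation, not a gap.

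One small technical point worth being explicit about: the interchange of summation order in the doubly infinite sums ($\ell\le m$, $k\le\ell$) requires absolute convergence, which you correctly note is supplied by the well-definedness hypothesis (the analogue of \eqref{Iw} for each $\Dop$ application). Since all terms in these sums are positive, Tonelli applies and the interchange is unconditional — it is fine as you have it, but making that one-line observation would close the argument completely.
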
 

\begin{proof}
Choose $(Z_j)$ and $(B_j)$ so that $Z_j/Z_{j-1}=I_j$ and $B_j/B_{j-1}=A_j$.  Then the output of $\Dop(A, I)$  is  the ratio sequence  $(\wt B_\ell/\wt B_{\ell-1})_{\ell}$  of  
\[  \wt B_\ell=\sum_{k:\,k\le \ell}   B_k \prod_{i=k}^\ell I_i . \]  
Next,  the output of 
 $\Dop(\Dop(A, I), Y)$ is  the ratio sequence $(H_m/H_{m-1})_m$  of 
\[   H_m= \sum_{\ell:\,\ell\le m}   \wt B_\ell    \prod_{j=\ell}^m Y_j  
= \sum_{k:\,k\le m}  B_k  \sum_{\ell=k}^{m}  \Bigl( \; \prod_{i=k}^\ell I_i \Bigr) \Bigl(\;  \prod_{j=\ell}^m Y_j\Bigr)  . \]

Similarly,  define first 
\[  \wt Z_j=\sum_{k:\,k\le j}   Z_k\prod_{i=k}^j Y_i 
\quad\text{and}\quad 
\wt I_j  =\frac{\wt Z_j}{\wt Z_{j-1}} 
\]
so that $\wt I=\Dop(I, Y)$.    Let $\wt Y=\Rop(I, Y)$ and then 
\[  \wc B_\ell=\sum_{k:\,k\le \ell}    B_k   \prod_{i=k}^\ell \wt Y_i. \]  

Then  the output of  $\Dop\bigl( \Dop(A, \Rop(I, Y)), \Dop(I, Y)\bigr)=\Dop( \Dop(A, \wt Y), \wt I\,)$ is  the ratio sequence of 
\[   \wt H_m= \sum_{\ell:\,\ell\le m}   \wc B_\ell   \prod_{j=\ell}^m \wt I_j  
= \sum_{k:\,k\le m}   B_k   \sum_{\ell=k}^{m}  \Bigl(  \; \prod_{i=k}^\ell \wt Y_i\Bigr) \Bigl( \;  \prod_{j=\ell}^m \wt I_j\Bigr). \]

The lemma follows from $H= \wt H$,  which we  verify by checking  that for all $k\le m$,  
 \be\label{DR45} 
  \sum_{\ell=k}^{m}  \Bigl( \; \prod_{i=k}^\ell I_i \Bigr) \Bigl(\;  \prod_{j=\ell}^m Y_j\Bigr) 
  =  \sum_{\ell=k}^{m}  \Bigl(  \; \prod_{i=k}^\ell \wt Y_i\Bigr) \Bigl( \;  \prod_{j=\ell}^m \wt I_j\Bigr). 
  \ee
We fix $k$ and prove this by    induction on $m$.   The case $m=k$ follows from   \eqref{m:R} and \eqref{m:801a}:  
  \[ \wt Y_k  \wt I_k  =  \frac{ Y_k\bigl( \frac{I_k}{J_{k-1}}  + 1 \bigr)}{\frac1{I_k}+ \frac1{J_{k-1}}}  = I_k Y_k.   \] 
  
  To prove the induction step, we introduce two auxiliary quantities by adding   terms separately on the left and right of \eqref{DR45}: let 
 \[  T_m=   J_{k-1}   \prod_{j=k}^m Y_j  
 +  \sum_{\ell=k}^{m}  \Bigl( \; \prod_{i=k}^\ell I_i \Bigr) \Bigl(\;  \prod_{j=\ell}^m Y_j\Bigr) 
 \] 
 and 
 \[  \wt T_m=  \sum_{\ell=k}^{m}  \Bigl(  \; \prod_{i=k}^\ell \wt Y_i\Bigr) \Bigl( \;  \prod_{j=\ell}^m \wt I_j\Bigr)   +    \Bigl(\; \prod_{i=k}^m \wt Y_i\Bigr) J_m.
 \]  
Repeated application of \eqref{m:Ja} implies that  $ J_{k-1}   \prod_{j=k}^m Y_j = (\; \prod_{i=k}^m \wt Y_i) J_m$.  Thus   \eqref{DR45} is equivalent to  $\wt T_m=T_m$. 
  
  First observe that $T_{m+1}=T_{m} \wt I_{m+1}$ for $m\ge k$.  This follows from checking inductively the pair of identities  
  \[   \frac{T_m}{ \prod_{i=k}^m I_i} = J_m 
  \quad\text{and}\quad   \frac{T_{m+1}}{T_{m}}= \wt I_{m+1} 
  \quad\text{for } \ m\ge k.  
  \] 
 This relies on the first equalities of the iterative formulas \eqref{m:801a} and \eqref{m:Ja}. 
  
 Now assume that  $\wt T_m=T_m$. We show that $\wt T_{m+1}=\wt T_{m} \wt I_{m+1}$ which then implies $\wt T_{m+1}=T_{m+1}$. 
 \begin{align*}
 \wt T_{m+1}&=  \sum_{\ell=k}^{m}  \Bigl(  \; \prod_{i=k}^\ell \wt Y_i\Bigr) \Bigl( \;  \prod_{j=\ell}^m \wt I_j\Bigr)  \wt I_{m+1}  +  \Bigl(  \; \prod_{i=k}^{m}  \wt Y_i\Bigr)  \wt Y_{m+1} \wt I_{m+1}  +    \Bigl(\; \prod_{i=k}^m \wt Y_i\Bigr) \wt Y_{m+1} J_{m+1} \\
 &=\wt T_m   \wt I_{m+1}  
 +  \Bigl(\; \prod_{i=k}^m \wt Y_i\Bigr) \bigl( - J_m   \wt I_{m+1}  +   \wt Y_{m+1} \wt I_{m+1} +  \wt Y_{m+1} J_{m+1} \bigr)  . 
 \end{align*} 
 The last expression in parentheses vanishes because 
 $J_m   \wt I_{m+1}=Y_{m+1}(I_{m+1}+J_m) $,   $\wt Y_{m+1} \wt I_{m+1}= Y_{m+1} I_{m+1}$ and  $\wt Y_{m+1} J_{m+1}= Y_{m+1} J_{m}$. 
  \end{proof}

\medskip

\section{The inverse-gamma polymer}  \label{app:inv-gam} 

This section reviews the ratio-stationary inverse-gamma polymer introduced in \cite{sepp-12-aop-corr} and then constructs   the two-variable jointly ratio-stationary process, which is a special case of the multivariate construction  from the forthcoming work \cite{fan-sepp-20+}. 

\subsection{Inverse-gamma weights} 
\label{app:i-g-w} 
Recall the   inverse gamma distribution from \eqref{invga} and the mean from \eqref{invga7}. 
%
%

 
	\begin{lemma}\label{v:lm} 
Define the mapping 	$(I,J,Y)\mapsto(I',J',Y')$ on $\R_{>0}^3$ by 
\be\label{IJw19} 
			I'=Y\biggl( 1+\frac{I}{J}\biggr)\,,\quad	J'=Y\biggl( 1+\frac{J}{I}\biggr)  \,,\quad 
			Y'=\frac1{I^{-1}+ J^{-1}}    .  
		\ee 
	\begin{enumerate}[{\rm(a)}] \itemsep=3pt 
	\item  $(I,J,Y)\mapsto(I',J',Y')$ is an involution. 
			\item   Let $\alpha, \beta>0$.    Suppose that $I,J,Y$ are   independent random variables with distributions  $I\sim{\rm Ga}^{-1}(\alpha)$, 
		$J\sim{\rm Ga}^{-1}(\beta)$ and $Y\sim{\rm Ga}^{-1}(\alpha+\beta)$.   Then the triple  $( I', J', Y')$  has the same distribution as  $(I,J,Y)$. 
		\end{enumerate}
	\end{lemma}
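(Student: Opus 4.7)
For part (a), the plan is a direct algebraic verification. I would first rewrite \eqref{IJw19} as $I' = Y(I+J)/J$, $J' = Y(I+J)/I$, and $Y' = IJ/(I+J)$, observing in particular that the ratio is preserved: $I'/J' = I/J$. Applying the transformation a second time then gives $Y'(1+I'/J') = [IJ/(I+J)]\cdot[(I+J)/J] = I$, symmetrically $Y'(1+J'/I') = J$, and $((I')^{-1} + (J')^{-1})^{-1} = Y(I+J)/(I+J) = Y$. So $(I,J,Y) \mapsto (I',J',Y')$ squares to the identity.

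For part (b), I would pass to the gamma side by taking reciprocals. Set $X = I^{-1}$, $U = J^{-1}$, $V = Y^{-1}$; then by \eqref{invga} these are independent with $X \sim \text{Ga}(\alpha)$, $U \sim \text{Ga}(\beta)$, $V \sim \text{Ga}(\alpha+\beta)$. The formulas \eqref{IJw19} become
\[
\frac{1}{I'} \;=\; \frac{VX}{X+U}, \qquad \frac{1}{J'} \;=\; \frac{VU}{X+U}, \qquad \frac{1}{Y'} \;=\; X+U,
\]
which puts the statement in a form suited to the classical beta–gamma algebra. Specifically, I would invoke the standard Lukacs-type fact that $W := X+U$ and $\eta := X/(X+U)$ are independent, with $W \sim \text{Ga}(\alpha+\beta)$ and $\eta \sim \text{Beta}(\alpha,\beta)$. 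Then the display above reads $(1/I',\, 1/J',\, 1/Y') = (V\eta,\, V(1-\eta),\, W)$, where $V \sim \text{Ga}(\alpha+\beta)$ is independent of the pair $(\eta, W)$.

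The finishing step is a swap. Since $V$ and $W$ are i.i.d.\ $\text{Ga}(\alpha+\beta)$ and both are independent of $\eta$, exchanging them preserves the joint law, so
\[
(V\eta,\, V(1-\eta),\, W) \;\deq\; (W\eta,\, W(1-\eta),\, V) \;=\; (X, U, V).
\]
Taking reciprocals componentwise yields $(I', J', Y') \deq (I, J, Y)$. The only nontrivial ingredient is the Lukacs decomposition, which is classical, so the main obstacle is really just the bookkeeping: cleanly carrying out the change of variables $I,J,Y \mapsto X,U,V$ and recognizing that the resulting beta–gamma structure collapses onto itself under the $V \leftrightarrow W$ swap.
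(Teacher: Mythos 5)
Your proof is correct and takes essentially the same approach as the paper: the paper's one-line proof of (b) refers to "the beta-gamma algebra to the reciprocals" with exactly the formulas you derive, and your Lukacs decomposition plus the $V\leftrightarrow W$ swap is the natural fleshing-out of that allusion. Part (a), which the paper leaves unproved, is verified correctly by your direct computation.
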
 
	
\begin{proof}
Part (b) follows by applying the beta-gamma algebra to the reciprocals that satisfy 
\[  \frac1{I'}=Y^{-1}\frac{I^{-1}}{I^{-1}+ J^{-1}} \,,\quad
\frac1{J'}=Y^{-1}\frac{J^{-1}}{I^{-1}+ J^{-1}}\quad\text{and}\quad 
\frac1{Y'}= I^{-1}+ J^{-1}     . 
\qedhere\]
\end{proof}	

\medskip 

%
%
 	
	\begin{lemma}\label{m:Lem-I}  Let $0<\rho<\sigma$.  Let $I=(I_k)_{k\tsp\in\tsp\Z}$ and $\Yw=(\Yw_j)_{j\in\Z}$  be mutually independent random variables such that $I_k\sim{\rm Ga}^{-1}(\rho)$ and $\Yw_j\sim{\rm Ga}^{-1}(\sigma)$.  Use mappings \eqref{m:DSR} to define 
	\[  \wt I=\Dop(I,\Yw)\,\quad \wt\Yw=R(I,\Yw)\quad\text{and}\quad J=S(I,\Yw).\]      Let $V_k= (\{\wt I_j\}_{j\leq k}, \,J_k, \,\{\wt \Yw_j\}_{j\leq k})$.  
		\begin{enumerate}[{\rm(a)}] \itemsep=3pt
			\item   $\{V_k\}_{k\tsp\in\tsp\Z}$ is a stationary, ergodic process.  
			For each $k\in\Z$, the random variables $\{\wt I_j\}_{j\leq k}, \,J_k, \,\{\wt \Yw_j\}_{j\leq k} $ are mutually independent with marginal distributions  
			\[  \text{ $\wt I_j\sim{\rm Ga}^{-1}(\rho)$, \ \ $\wt\Yw_j\sim{\rm Ga}^{-1}(\sigma)$ \ and \   $J_k\sim{\rm Ga}^{-1}(\sigma-\rho)$.   } \] 
			
			\item  $\wt I$ and $\wt\Yw$  are    independent sequences of i.i.d.\ variables.  
		\end{enumerate}
	\end{lemma}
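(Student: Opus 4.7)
The engine of the proof is the Burke-type involution of Lemma \ref{v:lm}: with inputs $I\sim\text{Ga}^{-1}(\rho)$, $J\sim\text{Ga}^{-1}(\sigma-\rho)$, $Y\sim\text{Ga}^{-1}(\sigma)$ mutually independent, the triple $(I',J',Y')=(\wt I_k, J_k, \wt Y_k)$ produced from the recursions \eqref{m:801a}--\eqref{m:Ja} and the definition of $\wt Y_k$ is again mutually independent with the same marginals. The plan is to iterate this one-step Burke property inductively in $k$, after first establishing the marginal distribution of $J_k$.

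\emph{Step 1: marginal distribution of $J_k$.} The formula \eqref{J34} with $\ell=k-1$, $m=k$ gives the recursion $J_k=Y_k(1+J_{k-1}/I_k)$. Pick a reference index $m<k$ and, independently of $(I_j,Y_j)_{j>m}$, introduce $\bar J_m\sim\text{\rm Ga}^{-1}(\sigma-\rho)$. Define $\bar J_\ell$ for $\ell>m$ by the same recursion with $(I_\ell,Y_\ell)$. Applying Lemma \ref{v:lm}(b) with parameters $\alpha=\rho$, $\beta=\sigma-\rho$ inductively in $\ell$ gives $\bar J_k\sim\text{\rm Ga}^{-1}(\sigma-\rho)$. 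From \eqref{J34} we have the contraction
\[
J_k-\bar J_k=(J_m-\bar J_m)\prod_{i=m+1}^{k}\frac{Y_i}{I_i},
\]
and since $\E[\log(Y_i/I_i)]=\psi_0(\rho)-\psi_0(\sigma)<0$, the product tends to $0$ almost surely by the SLLN as $m\to-\infty$. Hence $J_k-\bar J_k\to 0$ a.s., so $J_k\sim\text{\rm Ga}^{-1}(\sigma-\rho)$.

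\emph{Step 2: joint independence via iterated Burke.} Fix $k$ and $m<k$. I claim the finite collection
\[
\bigl(\wt I_{m+1},\dotsc,\wt I_k,\; J_k,\; \wt Y_{m+1},\dotsc,\wt Y_k\bigr)
\]
is mutually independent with the marginals stated in (a). I prove this by induction on $k-m$. For $k=m+1$ the claim is exactly Lemma \ref{v:lm}(b) applied to $(I_{m+1},J_m,Y_{m+1})$, whose independence and marginal distributions are supplied by Step 1 together with the fact that $J_m$ is measurable with respect to $\sigma(I_j,Y_j:j\le m)$ and hence independent of $(I_{m+1},Y_{m+1})$. For the inductive step, note that by the hypothesis $J_{k-1}\sim\text{\rm Ga}^{-1}(\sigma-\rho)$ is independent of $(\wt I_{m+1},\dotsc,\wt I_{k-1},\wt Y_{m+1},\dotsc,\wt Y_{k-1})$, and $(I_k,Y_k)$ is independent of the whole $\sigma$-algebra generated by $(I_j,Y_j)_{j\le k-1}$, hence of both preceding collections. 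Therefore $(I_k,J_{k-1},Y_k)$ has the product distribution $\text{\rm Ga}^{-1}(\rho)\otimes\text{\rm Ga}^{-1}(\sigma-\rho)\otimes\text{\rm Ga}^{-1}(\sigma)$ and is jointly independent of $(\wt I_{m+1},\dotsc,\wt I_{k-1},\wt Y_{m+1},\dotsc,\wt Y_{k-1})$. Applying Lemma \ref{v:lm}(b) to $(I_k,J_{k-1},Y_k)$ produces the Burke image $(\wt I_k,J_k,\wt Y_k)$, still jointly independent of the tilde-past and mutually independent with the stated marginals. This closes the induction. Letting $m\to-\infty$ and using that independence for all finite subfamilies implies independence for the whole collection gives statement (a).

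\emph{Step 3: stationarity, ergodicity, and part (b).} Let $\tau$ denote the shift $(\tau\omega)_j=(I_{j+1},Y_{j+1})$ on the i.i.d.\ product space; $\tau$ is measure-preserving and ergodic. The definitions of $\wt I_k$, $J_k$, $\wt Y_k$ commute with shifting, so $V_k(\tau\omega)=V_{k+1}(\omega)$; hence $(V_k)_{k\in\Z}$ is a stationary, ergodic process as claimed in (a). For part (b), the independence and distributional content of (a), applied at each $k$ and then integrating out $J_k$, shows that for every finite index set $F\subset\Z$ the joint $(\wt I_j,\wt Y_j)_{j\in F}$ has the product distribution $\text{\rm Ga}^{-1}(\rho)^{\otimes F}\otimes\text{\rm Ga}^{-1}(\sigma)^{\otimes F}$, which is exactly the statement that $\wt I$ and $\wt Y$ are independent i.i.d.\ sequences with the named marginals. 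The main technical point that needs care is the coupling/contraction argument in Step 1, which identifies the correct stationary distribution of the Markov chain driven by $(I_\ell,Y_\ell)$; everything after that is a disciplined iteration of the Burke identity.
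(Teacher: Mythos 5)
Your proposal is correct and follows essentially the same coupling-plus-Burke strategy as the paper, differing mainly in organization. The paper's proof works with an explicit auxiliary process $(\wh I^N_k,\wh J^N_k,\wh Y^N_k)$ started from a single fresh $\text{Ga}^{-1}(\sigma-\rho)$ variable $Z$ placed at position $-N$, proves the joint independence and marginals for \emph{that} approximating process by the induction in \eqref{98.7}, and then transfers the statement to the true process by the contraction estimate \eqref{98.91}--\eqref{98.35}. You instead use the contraction argument up front (Step 1) to identify the marginal law of $J_k$, and then run the same Burke-induction directly on the true process (Step 2). The two routes are logically equivalent; yours is a reasonable reorganization and perhaps slightly cleaner to read.

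Two small points worth tightening. First, in Step 1 you assert $J_k-\bar J_k\to0$ almost surely as $m\to-\infty$. The product $\prod_{i=m+1}^k(Y_i/I_i)$ does vanish a.s.\ by the SLLN, but the prefactor $J_m-\bar J_m$ changes with $m$ and is not a.s.\ bounded; it is only tight, because $\{J_m\}$ is a finite stationary process. The correct conclusion is convergence in probability, which is exactly how the paper handles it via the bound $|J_k-\wh J^N_k|\le e^{-N\delta}(J_{-N}+Z)\cdot e^{N\delta+\sum(\log Y_i-\log I_i)}$ together with Markov's inequality for $e^{-N\delta}J_{-N}$. Convergence in probability is all that your distributional conclusion needs, so this is an imprecision rather than an error. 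Second, you silently assume that the defining series for $J_k$, $\wt I_k$, $\wt Y_k$ converge. The paper devotes the opening portion of the proof to verifying condition \eqref{Iw} almost surely (via the same drift comparison $\psi_0(\rho)<\psi_0(\sigma)$ plus a Borel--Cantelli bound for $\sup_j e^{j\delta}Y_j$); without this, the objects in Step 1 are not known to be finite, and the stationarity of $\{J_m\}$ that the tightness argument needs is not available. This is a genuine missing step, although it is supplied by exactly the tools you already invoke.
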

	
	\begin{proof} 
We start by verifying \eqref{Iw} 	to guarantee that the processes $\wt I$, $\wt\Yw$ and $J$ are almost surely  well-defined and finite.  To this end we   show that
\be\label{98.04} 
  \sum_{j=-\infty}^0  {\Yw_{j}} \prod_{i=j+1}^{0}\frac{\Yw_i}{I_i} 
  <\infty \quad\text{ with probability one.}    
 \ee 
  Rewrite the above as 
  \be\label{98.08} 
 \sum_{j=-\infty}^0  {\Yw_{j}} \prod_{i=j+1}^{0}\frac{\Yw_i}{I_i} =   \sum_{j=-\infty}^0  {\Yw_{j}}  \, e^{ \sum_{i=j+1}^0(\log\Yw_i-\log I_i)  }   
  =  \sum_{j=-\infty}^0  e^{\,j\delta} {\Yw_{j}} \,  e^{-j\delta+ \sum_{i=j+1}^0(\log\Yw_i-\log I_i)  }
\ee
where we can choose  $\delta>0$ to satisfy 
  \be\label{98.09}  \E[ \log\Yw_i-\log I_i]= -\psi_0(\sigma)+\psi_0(\rho)<-3\delta<0 \ee
because  $\psi_0$ is  strictly increasing.  Hence  almost surely for large enough $j<0$, 
 \be\label{98.10}  \sum_{i=j+1}^0(\log\Yw_i-\log I_i) \le 2j\delta   . 
\ee  
The estimate below shows that, for any $\delta>0$,  $\sup_{j\le0}  e^{j\delta} {\Yw_{j}}$ is almost surely finite: 
\begin{align*}
\sum_{j\le -1} \P(Y_j\ge e^{-j\delta}) =  \sum_{j\le -1} \P(\log Y_j\ge {-j\delta}) 
\le \sum_{j\le -1} \frac{\E[(\log Y_j)^2\tsp]}{j^2\delta^2} <\infty . 
\end{align*}
The almost sure convergence of the series \eqref{98.04} has been verified.  We turn to the proof of the lemma. 
 
\medskip 
	
	  Part (b) follows from part (a) by dropping the $J_k$ coordinate and letting $k\to\infty$.    Stationarity and ergodicity of $\{V_k\}$ follow from its construction as a mapping applied to the independent i.i.d.\ sequences $I$ and $\Yw$.

	\medskip
		
		The distributional claims in part (a) are proved   by coupling $(\wt I_k, J_{k-1}, \wt\Yw_k)_{k\tsp\in\tsp\Z}$ with another sequence of processes (indexed by $N$ below) whose distribution we know.  
		Let $Z$ be a fixed ${\rm Ga}^{-1}(\sigma-\rho)$ variable that is independent of $(I,\Yw)$.
		
		For each $N\geq 0$,   construct a process  $(\wh I_k^{N}, \wh J_{k-1}^{N}, \wh\Yw_k^{N})_{k\ge -N+1}$ as follows.    First let $\wh J_{-N}^{N}=Z$.    Then   iterate the steps 
		\be\label{98.3} \begin{aligned}
			(\wh I_k^{N},\wh J_k^{N},\wh\Yw_k^{N})= \Theta\big(I_k,\wh J_{k-1}^{N},\Yw_k\big) \quad \text{for }k\geq -N+1,
		\end{aligned}\ee
		where $\Theta(I,J,Y)=(I',J',Y')$ is the involution \eqref{IJw19}  in Lemma \ref{v:lm}.
		We claim that for each $k\in \Z$,
		\be\label{98.8} 
		\lim_{N\to \infty} \wh J_{k}^{N}=J_{k}\,, \quad \lim_{N\to\infty} \wh I_{k}^{N}=\wt I_{k} \quad \text{and}\quad  \lim_{N\to\infty} \wh\Yw_{k}^{N}=\wt\Yw_{k}  \quad\text{in probability}. 
		\ee  
		
Applying \eqref{J34} gives 		
		\begin{equation}\label{98.91}
			{J_k}- {\wh J_k^{N}}= (  {J_{-N}} - {\wh J_{-N}^{N}}) \prod_{i=-N+1}^k\frac{\Yw_i}{I_i} = (  {J_{-N}} - Z) \prod_{i=-N+1}^k\frac{\Yw_i}{I_i} 
		\end{equation}
from which 
\be\label{98.35}
 \abs{\, {J_k}- {\wh J_k^{N}}\,} \le e^{-N\delta} ( {J_{-N}} +  Z) \,  e^{N\delta + \sum_{i=-N+1}^k(\log\Yw_i-\log I_i)  } 
\ee
where we chose $\delta>0$ as in \eqref{98.09}.  Hence the last exponential factor above vanishes almost surely as $N\to\infty$.  
 The equation  
\be\label{J36.4} 
J_k=\frac{\wt Z_k}{Z_k} =  \sum_{j:\,j\le k}    \Yw_j\,\prod_{i=j+1}^{k} \frac{\Yw_i}{I_i} 
\ee 
shows that $\{J_k\}$ is a finite stationary process, and consequently $e^{-N\delta} J_{-N}\to0$ in probability.  \eqref{98.35} implies the first limit in probability  in \eqref{98.8}.


 

To get the  second limit  in \eqref{98.8}, apply  \eqref{98.3} and the first limit as $N\to\infty$:   
\be\label{98.73} 
\wh I_{k}^{N}  = \Yw_k\biggl( \frac{I_k}{\wh J^N_{k-1}}  + 1 \biggr) 
\; \overset{P}\longrightarrow \;   \Yw_k\biggl( \frac{I_k}{J_{k-1}}  + 1 \biggr)=  \wt I_{k}.  
\ee
For the  last limit  in \eqref{98.8},
\be\label{98.76} 
\wh\Yw_{k}^{N}  =  (I_k^{-1}+ \bigl(\wh J^N_{k-1})^{-1}\bigr)^{-1} 
\; \overset{P}\longrightarrow \;   \bigl(I_k^{-1}+ J_{k-1}^{-1}\bigr)^{-1} =  \wt \Yw_k.  
\ee
		
		\medskip
		
		Next, we prove the following claim  for each $N\geq 1$: 
		\be\label{98.7} \begin{aligned} 
			&\text{for each $m\ge -N+1$, the random   variables $\wh I^N_{-N+1},\dotsc, \wh I^N_m, \wh J^N_m,  \wh\Yw^N_m ,\dotsc, \wh\Yw^N_{-N+1} $}\\
			&\text{are mutually independent with marginal distributions} \\  
			&\wh I^N_k\sim{\rm Ga}^{-1}(\rho)\,, \quad \wh J^N_m\sim{\rm Ga}^{-1}(\sigma-\rho), \quad\text{and}\quad  \wh\Yw^N_j\sim{\rm Ga}^{-1}(\sigma). 
		\end{aligned} \ee
		
		By construction    $(I_{-N}, \wh J^N_{-N}, \Yw_{-N})\sim {\rm Ga}^{-1}(\rho) \otimes  {\rm Ga}^{-1}(\sigma-\rho) \otimes {\rm Ga}^{-1}(\sigma)$.      
		The base case $m=-N+1$ of \eqref{98.7}  comes by applying Lemma \ref{v:lm} to  the mapping \eqref{98.3} with $k=-N+1$.   
		
		Assume  \eqref{98.7}   holds for $m$.   By the induction assumption and by the  independence of the ingredients that go into the construction,  
		\[  \wh I^N_{-N+1},\dotsc, \wh I^N_m, (I_{m+1}, \wh J^N_{m}, \Yw_{m+1}), \wh\Yw^N_m ,\dotsc, \wh\Yw^N_{-N+1}  \] 
		are independent.  Furthermore,    $(I_{m+1}, \wh J^N_{m}, \Yw_{m+1})\sim {\rm Ga}^{-1}(\rho) \otimes  {\rm Ga}^{-1}(\sigma-\rho) \otimes {\rm Ga}^{-1}(\sigma)$.     By Lemma \ref{v:lm}, the mapping \eqref{98.3} turns the triple $(I_{m+1}, \wh J^N_{m}, \Yw_{m+1})$ into  $(\wh I^N_{m+1}, \wh J^N_{m+1}, \wh\Yw^N_{m+1})\sim {\rm Ga}^{-1}(\rho) \otimes  {\rm Ga}^{-1}(\sigma-\rho) \otimes {\rm Ga}^{-1}(\sigma)$.    Statement \eqref{98.7} has been extended  to $m+1$.  The proof of \eqref{98.7} is complete.
		
		Part (a) follows from \eqref{98.8} and \eqref{98.7}. 
	\end{proof}

Next we describe  a distributional  fixed point of  the mapping 
$(I^1, I^2)\mapsto \bigl( \Dop(I^1,Y), \Dop(I^2,Y) \bigr) $ when $Y$ is an i.i.d.\ inverse-gamma sequence. 
   Let  
  $\sigma>\alpha_1>\alpha_2>0$.  Let $A^1=(A^1_j)_{j\in\Z}$,  $A^2=(A^2_j)_{j\in\Z}$, $Y=(Y_j)_{j\in\Z}$ be   mutually independent i.i.d.\ sequences with marginals   $A^k_j\sim{\rm Ga}^{-1}(\alpha_k)$ for $k\in\{1,2\}$   and $Y_j\sim{\rm Ga}^{-1}(\sigma)$.    Define a jointly distributed pair of  boundary sequences by 
  $(I^1, I^2)= \bigl( A^1, \Dop( A^2, A^1)\bigr)$.  From these and bulk weights $Y$, define jointly distributed output variables: 
  \[  
   \wt I^k=\Dop(I^k,Y),  \quad J^k=\Sop(I^k,Y), \quad\text{and} \quad \wc Y^k=\Rop(I^k,Y)
   \quad \text{  for } k\in\{1,2\}.  
   \]  
  
\begin{lemma}\label{lm:DR5}   We have the following properties. 
\begin{enumerate}[{\rm(i)}]   \itemsep=3pt 
\item Marginally $I^2$ is a sequence of i.i.d.\ ${\rm Ga}^{-1}(\alpha_2)$ variables. 
\item For   fixed   $k\in\{1,2\}$ and $m\in\Z$, the random variables   $\{\wt I^k_j\}_{j\le m}$, $J^k_m$, and $\{\wc Y^k_j\}_{j\le m}$ are mutually independent with marginal distributions $\wt I^k_j\sim\text{\rm Ga}^{-1}(\alpha_k)$, $J^k_m\sim\text{\rm Ga}^{-1}(\sigma-\alpha_k)$, and $\wc Y^k_j\sim\text{\rm Ga}^{-1}(\sigma)$. 
\item For  fixed   $k\in\{1,2\}$,   $\wt I^k$ and $\wc Y^k$ are mutually independent sequences of i.i.d.\ random variables with marginal distributions $\wt I^k_j\sim\text{\rm Ga}^{-1}(\alpha_k)$ and $\wc Y^k_j\sim\text{\rm Ga}^{-1}(\sigma)$. 

\item  $(\wt I^1, \wt I^2)\deq(I^1, I^2)$, in other words, we have  a distributional fixed point for this joint polymer operator. 

\item   For any $m\in\Z$, the random variables $\{ I^2_i\}_{i\le m}$ and $\{I^1_j\}_{j\ge m+1}$ are mutually independent.  


\end{enumerate} 
\end{lemma}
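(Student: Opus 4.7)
The plan is to prove the five parts of Lemma \ref{lm:DR5} by repeatedly applying the single-variable stationarity result Lemma \ref{m:Lem-I} together with the operator identity Lemma \ref{lm:DR}, while tracking independence carefully.

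For part (i), I would apply Lemma \ref{m:Lem-I}(b) to the auxiliary pair with boundary input $A^2$ (of parameter $\alpha_2$) and ``bulk'' input $A^1$ (of parameter $\alpha_1$); since $\alpha_2<\alpha_1$, the hypothesis is met, and the output $\Dop(A^2,A^1)=I^2$ is i.i.d.\ $\text{Ga}^{-1}(\alpha_2)$, independent of $Y$ (because it is a function of $(A^1,A^2)$). Parts (ii) and (iii) are then direct applications of Lemma \ref{m:Lem-I}(a) and (b) to the pair $(I^k,Y)$: for $k=1$ this is immediate since $I^1=A^1$; for $k=2$ this uses (i) to supply the required i.i.d.\ Ga$^{-1}(\alpha_2)$ distribution of $I^2$ and its independence of $Y$.

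The main obstacle, and the only place genuine work is needed, is part (iv). I would start from the operator identity of Lemma \ref{lm:DR} with $A=A^2$, $I=A^1$:
\begin{equation*}
\wt I^2 \;=\; \Dop(I^2,Y) \;=\; \Dop\bigl(\Dop(A^2,A^1),Y\bigr) \;=\; \Dop\bigl(\Dop(A^2,\Rop(A^1,Y)),\,\Dop(A^1,Y)\bigr) \;=\; \Dop\bigl(\Dop(A^2,\wc Y^1),\,\wt I^1\bigr).
\end{equation*}
Now by Lemma \ref{m:Lem-I}(b) applied to $(A^1,Y)$, the two sequences $\wt I^1$ and $\wc Y^1$ are independent i.i.d.\ sequences with marginals $\text{Ga}^{-1}(\alpha_1)$ and $\text{Ga}^{-1}(\sigma)$ respectively. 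Since $A^2$ is independent of $(A^1,Y)$, the triple $(A^2,\wc Y^1,\wt I^1)$ has the same joint distribution as the triple $(A^2,Y,A^1)$ of mutually independent i.i.d.\ sequences with matching marginals. Setting $B=\Dop(A^2,\wc Y^1)$, the pair $(\wt I^1,B)$ is therefore distributed like $(A^1,A^2)$ (both are independent i.i.d.\ Ga$^{-1}(\alpha_1)\otimes\text{Ga}^{-1}(\alpha_2)$ sequences). Applying the measurable map $(x,y)\mapsto(x,\Dop(y,x))$ to both sides yields
\begin{equation*}
(\wt I^1,\wt I^2)\;=\;\bigl(\wt I^1,\Dop(B,\wt I^1)\bigr)\;\deq\;\bigl(A^1,\Dop(A^2,A^1)\bigr)\;=\;(I^1,I^2),
\end{equation*}
which is the fixed-point claim.

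For part (v), I would invoke the locality property recorded just before Lemma \ref{lm:DR}: the output $\{\Dop(A^2,A^1)_i\}_{i\le m}$ depends only on the inputs $\{A^2_k,A^1_k\}_{k\le m}$. Thus $\{I^2_i\}_{i\le m}$ is a function of $\{A^1_k,A^2_k\}_{k\le m}$, while $\{I^1_j\}_{j\ge m+1}=\{A^1_j\}_{j\ge m+1}$ is a function of $\{A^1_j\}_{j\ge m+1}$; the claimed independence then follows from the product structure of the underlying i.i.d.\ sequences $A^1,A^2$. The delicate point throughout is keeping track of which $\sigma$-algebras the various Busemann-type objects are measurable with respect to, so that the distributional identities can be upgraded from marginal to joint statements; Lemma \ref{lm:DR} is precisely the algebraic miracle that makes this bookkeeping close up.
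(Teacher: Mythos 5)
Your proof is correct and follows essentially the same route as the paper: parts (i)--(iii) via Lemma \ref{m:Lem-I}, part (iv) via the factorization $\wt I^2=\Dop(\Dop(A^2,\wc Y^1),\wt I^1)$ from Lemma \ref{lm:DR} combined with the joint independence of $(A^2,\wc Y^1,\wt I^1)$, and part (v) via the locality of $\Dop$ in the inputs indexed by $\le m$. Your part (iv) is slightly more explicit than the paper's (spelling out the measurable map $(x,y)\mapsto(x,\Dop(y,x))$), but the underlying argument is identical.
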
 

\begin{proof}
  Parts (i)--(iii) come from Lemma \ref{m:Lem-I}. 

For part  (iv),   the marginal distributions of $\wt I^1$ and $\wt I^2$ are the correct ones  by Lemma \ref{lm:DR5}(iii). To establish the correct joint distribution,   the definition of $(I^1, I^2)$ points us to find an i.i.d.\ ${\rm Ga}^{-1}(\alpha_2)$  random sequence $Z$ that is independent of $\wt I^1$ and satisfies $\wt I^2=   \Dop( Z, \wt I^1)$.  From the definitions and  Lemma \ref{lm:DR}, 
\begin{align*}
\wt I^2=\Dop(I^2,Y) =  \Dop\bigl( \Dop( A^2, I^1),Y\bigr)
= \Dop\bigl( \Dop(A^2, \Rop(I^1, Y)), \Dop(I^1, Y)\bigr)
= \Dop\bigl( \Dop(A^2, \wc Y^1), \wt I^1\bigr).
\end{align*} 
By assumption $A^2, I^1, Y$ are independent. Hence by Lemma \ref{lm:DR5}(iii)   $A^2, \wc Y^1, \wt I^1$ are independent.  So we take $Z=\Dop(A^2, \wc Y^1)$  which is an  i.i.d.\ ${\rm Ga}^{-1}(\alpha_2)$    sequence by Lemma \ref{lm:DR5}(iii).  This proves part  (iv).

 We know that marginally $I^1$ and  $I^2$ are i.i.d.\ sequences.   \eqref{m:800} and \eqref{m:801} show  that variables $\{ I^2_i\}_{i\le m}$ are functions of $(\{A^2_i\}_{i\le m}\,, \{I^1_i\}_{i\le m})$ which are independent of $\{I^1_j\}_{j\ge m+1}$. 
\end{proof}

\medskip

\subsection{Two jointly ratio-stationary polymer processes}
\label{sec:stat-pol} 
Pick $0<\lambda<\rho<\sigma$ and a base vertex $u=(u_1,u_2)\in\Z^2$.  We construct   two coupled polymer  processes  $Z^\lambda_{u,\bbullet}$ and $Z^\rho_{u,\bbullet}$   on the nonnegative  quadrant $u+\Z_{\ge0}^2$  such that the joint  process
$\{ (Z^\lambda_{u,y}/Z^\lambda_{u,x},  Z^\rho_{u,y}/Z^\rho_{u,x}): x,y\in u+\Z_{\ge0}^2\}$ 
of  ratios is  stationary under translations $(x,y)\mapsto(x+v, y+v)$.  
Both processes use the same   i.i.d.\ ${\rm Ga}^{-1}(\sigma)$ weights $\{\Yw_x\}_{x\,\in\,u+\Z_{>0}^2}$ in the bulk.  They have  boundary conditions on the positive  $x$- and $y$-axes emanating from the origin at $u$, coupled in a way described in the next theorem. 

For $\alpha\in\{\lambda, \rho\}$, we repeat here  the definition of the process $Z^\alpha_{u,\bbullet}$    given earlier in \eqref{g:Z1}.  
 On the boundaries of the quadrant we have strictly positive   boundary weights  
	$\{I^\alpha_{u+i\evec_1}, J^\alpha_{u+j\evec_2}:i, j\in\Z_{>0}\}$.   Put  $Z^\alpha_{u,u}=1$ and on the boundaries 
	\be\label{Zr11a} Z^\alpha_{u,\,u+\,k\evec_1}=\prod_{i=1}^k I^\alpha_{u+i\evec_1} 
	\quad\text{and}\quad
	Z^\alpha_{u,\,u+\,l\evec_2}= \prod_{j=1}^l  J^\alpha_{u+j\evec_2}  \quad\text{ for } k,l\ge 1.   \ee 
	In the bulk 
	for $x=(x_1,x_2)\in u+ \Z_{>0}^2$, 
	\be\label{Zr12a}\begin{aligned} 
	Z^\alpha_{u,\,x}&= \sum_{k=1}^{x_1-u_1} \biggl( \;\prod_{i=1}^k I^\alpha_{u+i\evec_1}\biggr) Z_{u+k\evec_1+\evec_2, \,x}  + \sum_{\ell=1}^{x_2-u_2}\biggl(  \;\prod_{j=1}^\ell  J^\alpha_{u+j\evec_2} \biggr)  Z_{u+\evec_1+\ell \evec_2, \,x}   \\
	&=\bigl( Z^\alpha_{u,\,x-\evec_1}+ Z^\alpha_{u,\,x-\evec_2} \bigr)  \Yw_x.
	\end{aligned} \ee
	 $Z^\alpha_{u,\bbullet}$ does not use a weight at the base point $u$. $Z_{x,y}$  above is the partition function   \eqref{m:G807} that uses the bulk weights $\Yw$. 
Define ratio variables  for vertices $x\in u+\Z_{>0}^2$  by 
\be\label{IJ85}    I^\alpha_x=Z^\alpha_x/Z^\alpha_{x-\evec_1} 
\quad\text{and}\quad 
J^\alpha_x=Z^\alpha_x/Z^\alpha_{x-\evec_2}. 
\ee

The next theorem describes the jointly stationary process that is used in the proofs of Section \ref{sec:estim}.  Since those arguments work with the $J$-ratio variables on the $y$-axis, in order to tailor this theorem to its application we construct the joint process on the right half-plane and then restrict that process to the first quadrant. Consequently the upper half-plane of Sections \ref{app:pol-H} and \ref{app:i-g-w}   has been turned into the right half-plane, and thereby horizontal has become vertical. 
An important part of the theorem   is the independence of various collections of ratio variables.  These are illustrated in Figure \ref{fig:indIJ}. 
 
 \begin{figure}
	\begin{tikzpicture}[scale=0.6, every node/.style={transform shape}]
	\def\s{1.6}
	\draw[->,line width=0.08mm,>=latex]   (0,0) --(0,5); 
	\draw[->,line width=0.08mm,>=latex]   (0,0) --(5,0); 
	\node [scale=\s][below] at (0,0) {$u$};
	\draw [fill] (0,0) circle [radius=0.07];
	\node [scale=\s][below] at (3.2,3.5) {$x$};
	\draw [dashed][line width=0.01cm] (3.6,0) -- (3.6,5);
	\node [scale=\s][below] at (4.2,2) {$J^\lambda$};
	\node [scale=\s][below] at (4.2,5.2) {$J^\rho$};
	\draw [fill] (3.6,3.3) circle [radius=0.07];
	\end{tikzpicture}
	\qquad\quad 
	\begin{tikzpicture}[scale=0.6, every node/.style={transform shape}]
	\def\s{1.6}
	\draw[->,line width=0.08mm,>=latex]   (0,0) --(0,5); 
	\draw[->,line width=0.08mm,>=latex]   (0,0) --(5,0); 
	\node [scale=\s][below] at (0,0) {$u$};
	\draw [fill] (0,0) circle [radius=0.07];
	\node [scale=\s][below] at (2,2) {$v$};
	\draw [fill] (2,2) circle [radius=0.07];
	\draw [dashed][line width=0.01cm] (2,2) -- (2,5);
	\draw [dashed][line width=0.01cm] (2,2) -- (5,2);
	\node [scale=\s][below] at (4,2.8) {$I^\alpha$};
	\node [scale=\s][below] at (2.6,4) {$J^\alpha$};
	\end{tikzpicture}
	\qquad\quad  
	\begin{tikzpicture}[scale=0.6, every node/.style={transform shape}]
	\def\s{1.6}
\draw[->,line width=0.08mm,>=latex]   (0,0) --(0,5); 
\draw[->,line width=0.08mm,>=latex]   (0,0) --(5,0); 
\node [scale=\s][below] at (0,0) {$u$};
\draw [fill] (0,0) circle [radius=0.07];
\node [scale=\s][below] at (3.3,3.6) {$v$};
\draw [fill] (3,3) circle [radius=0.07];
\draw [dashed][line width=0.01cm] (3,3) -- (0,3);
\draw [dashed][line width=0.01cm] (3,3) -- (3,0);
\node [scale=\s][below] at (1.6,3.8) {$I^\alpha$};
\node [scale=\s][below] at (3.5,2) {$J^\alpha$};
	\end{tikzpicture}
	\caption{\small The independent ratio variables from Theorem \ref{thm:st-lpp}. Left: $J^\lambda$ below $x$ and $J^\rho$ above $x$ from part (i). Middle and right: $I^\alpha$ and $J^\alpha$ ratios on down-right lattice paths from part (ii).}
	\label{fig:indIJ} 
\end{figure}
 
\begin{theorem}\label{thm:st-lpp}   Let  $0<\lambda<\rho<\sigma$ and $u\in\Z^2$.  There exists a coupling of the boundary weights $\{I^\lambda_{u+i\evec_1},I^\rho_{u+i\evec_1}, 	J^\lambda_{u+j\evec_2}$, $J^\rho_{u+j\evec_2}:i, j\in\Z_{>0}\}$  such that the joint process $(Z^\lambda_{u,\bbullet}\,, Z^\rho_{u,\bbullet})$ has the following properties.  
\begin{enumerate}[{\rm(i)}] \itemsep=5pt 
\item {\rm(Joint)}     The joint process of ratios is stationary: for each $v\in u+\Z_{\ge0}^2$,  
\be\label{Zr17}    \biggl\{\biggl(\frac{Z^\lambda_{u,v+x}}{Z^\lambda_{u,v}}, \frac{Z^\rho_{u,v+x}}{Z^\rho_{u,v}}\biggr): x\in\Z_{\ge0}^2\biggr\}
\deq
 \bigl\{(Z^\lambda_{u,u+x}, Z^\rho_{u, u+x}): x\in\Z_{\ge0}^2\bigr\}.
 \ee
{\rm(}On the right above the implicit denominators $Z^\lambda_{u,u}=Z^\rho_{u,u}=1$ were omitted.{\rm)}   
The following independence property holds along vertical lines: for each $x\in u+\Z_{>0}^2$, the variables $\{ J^\lambda_{x+j\evec_2}: u_2-x_2+1\le j\le 0\}$ and $\{ J^\rho_{x+j\evec_2}: j\ge 1\}$ are mutually independent.  

\item {\rm(Marginal)}  For both $\alpha\in\{\lambda, \rho\}$ and for each $v=(v_1,v_2)\in u+\Z_{\ge0}^2$,  the ratio variables  $\{I^\alpha_{v+i\evec_1}, J^\alpha_{v+j\evec_2}:i, j\in\Z_{>0}\}$	 are mutually independent with marginal distributions %
 \[
I^\alpha_{v+i\evec_1}\sim\text{\rm Ga}^{-1}(\sigma-\alpha) \quad \text{ and }\quad J^\alpha_{v+j\evec_2}\sim\text{\rm Ga}^{-1}(\alpha). 
 \]
The same is true of the variables $\{I^\alpha_{v-i\evec_1}, J^\alpha_{v-j\evec_2}: 0\le i <  v_1-u_1,\, 0\le j< v_2-u_2\}$. 	

\item {\rm(Monotonicity)}  The boundary weights can be coupled with  i.i.d.\ ${\rm Ga}^{-1}(\sigma)$  weights $\{\eta_{u+i\evec_1}, \eta_{u+j\evec_2}: i,j\ge 1\}$ independent of the bulk weights $\Yw$ so that these inequalities hold almost surely for all $i,j\ge 1$: 
\be\label{L25}
	\eta_{u+i\evec_1}\le I^{\lambda}_{u+i\evec_1} \le I^{\rho}_{u+i\evec_1}
	\quad\text{and}\quad 
	\eta_{u+j\evec_2}\le J^{\rho}_{u+j\evec_2} \le  J^{\lambda}_{u+j\evec_2}.  
	\ee
\end{enumerate} 
\end{theorem}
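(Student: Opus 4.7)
My plan is to construct the coupling in two independent pieces --- a joint $y$-axis boundary and a joint $x$-axis boundary, each built from two i.i.d.\ inverse-gamma sequences through the operator $\Dop$ as in Lemma~\ref{lm:DR5} --- and then read off (i)--(iii) by combining the distributional fixed point of Lemma~\ref{lm:DR5}(iv) with the marginal stationarity already established in Lemma~\ref{m:Lem-I}. On a common probability space take mutually independent bi-infinite i.i.d.\ sequences $A^1,A^2,B^1,B^2$ and $\mathrm{Ga}^{-1}(\sigma)$ bulk weights $(Y_x)_{x\in u+\Z_{>0}^2}$, with $A^1_j\sim\mathrm{Ga}^{-1}(\rho)$, $A^2_j\sim\mathrm{Ga}^{-1}(\lambda)$, $B^1_i\sim\mathrm{Ga}^{-1}(\sigma-\lambda)$, $B^2_i\sim\mathrm{Ga}^{-1}(\sigma-\rho)$. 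Apply Lemma~\ref{lm:DR5} with $(\alpha_1,\alpha_2)=(\rho,\lambda)$ on the $y$-axis and with $(\alpha_1,\alpha_2)=(\sigma-\lambda,\sigma-\rho)$ on the $x$-axis to set $J^\rho_{u+j\evec_2}=A^1_j$, $J^\lambda_{u+j\evec_2}=\Dop(A^2,A^1)_j$, $I^\lambda_{u+i\evec_1}=B^1_i$, and $I^\rho_{u+i\evec_1}=\Dop(B^2,B^1)_i$ for $i,j\ge 1$. This determines the two processes $Z^\lambda_{u,\bullet}$ and $Z^\rho_{u,\bullet}$ through \eqref{Zr11a}--\eqref{Zr12a}.

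Monotonicity (iii) is immediate: the inequalities $I^\lambda\le I^\rho$ and $J^\rho\le J^\lambda$ are built into the $\Dop$ construction via \eqref{as}. For the lower bound $\eta\le I^\lambda$, introduce an independent sequence $G_i\sim\mathrm{Ga}(\lambda)$ and set $\eta^{-1}_{u+i\evec_1}=(I^\lambda_{u+i\evec_1})^{-1}+G_i$; gamma additivity then gives $\eta\sim\mathrm{Ga}^{-1}(\sigma)$ coupled so that $\eta\le I^\lambda$, and analogously on the $y$-axis using an independent $\mathrm{Ga}(\sigma-\rho)$ sequence. The marginal claims in (ii) follow, for each $\alpha$ separately, from Lemma~\ref{lm:DR5}(i) together with the Burke-type property for the single stationary inverse-gamma polymer, a corollary of the beta-gamma involution of Lemma~\ref{v:lm} iterated over an up-right sweep from $u$; the backward statement on $\{I^\alpha_{v-i\evec_1},J^\alpha_{v-j\evec_2}\}$ comes from running the same argument for the reversed polymer based at $v$.

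The heart of the argument is joint stationarity (i). By iteration it suffices to treat the two unit shifts. For the $\evec_1$-shift, the new $y$-axis boundary at column $u_1+1$ is the ratio sequence $J^\alpha_{u+\evec_1+j\evec_2}$ for $j\ge 1$. Reorganizing the sum over the exit point from the southwest boundary, one checks that this sequence coincides with the $\Dop$ output produced by $(J^\rho,J^\lambda)$ against the first column of bulk weights, once a bi-infinite extension is supplied whose role at indices $\le 0$ is played by the $x$-axis contribution involving $I^\alpha_{u+\evec_1}$. Lemma~\ref{lm:DR5}(iv) then gives equality in joint distribution to the original $y$-axis boundary. The new $x$-axis boundary is an index shift of the original and has the same joint law by translation invariance of the $\Dop$-based construction, while the new bulk remains i.i.d.\ $\mathrm{Ga}^{-1}(\sigma)$; independence of the three pieces after the shift is read from the dependency graph of the construction, whose input factors were independent. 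The $\evec_2$-shift is symmetric, and the independence statement along vertical lines then reduces by this stationarity to the $y$-axis case, where it is exactly Lemma~\ref{lm:DR5}(v).

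The main obstacle is the identification used in the $\evec_1$-shift step: matching the quadrant polymer --- which carries two independent half-infinite boundaries --- with the bi-infinite $\Dop$ operator requires absorbing the $x$-axis weight $I^\alpha_{u+\evec_1}$ into a bi-infinite boundary extension whose $\Dop$-output is consistent with the quadrant recursion. This is carried out by induction on the column index, leveraging that the $\Sop$ and $\Rop$ outputs appearing in Lemma~\ref{m:Lem-I}(a), upgraded to the joint setting by Lemma~\ref{lm:DR5}(ii)--(iii), simultaneously carry the correct $x$-axis and bulk statistics for the next column.
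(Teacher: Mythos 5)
Your construction differs from the paper's in a way that breaks joint stationarity (i). The paper places the coupled boundary $(\Jvec^\rho, \Jvec^\lambda) = (\Yvec^\rho, \Dop(\Yvec^\lambda, \Yvec^\rho))$ on a bi-infinite $y$-axis of the half-plane $u+\Z_{\ge0}\times\Z$ and lets the $x$-axis ratios \emph{emerge}; jointly over both $\alpha$ these emergent $x$-axis ratios are functions of the $y$-axis data at $j\le 0$ together with the bulk column, and they are \emph{not} independent of the $y$-axis pair at $j\ge 1$. Joint stationarity \eqref{Zr17} then falls out of the fixed-point identity $(\Jvec^{\rho,k},\Jvec^{\lambda,k})\deq(\Jvec^\rho,\Jvec^\lambda)$ of Lemma~\ref{lm:DR5}(iv) applied column by column, with no need to match an independently specified $x$-axis.

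Your quadrant process instead places independent $\Dop$-couplings on the two axes, which is a genuinely different joint law, and it does not satisfy \eqref{Zr17}. After the $\evec_1$-shift the new $y$-axis pair $\{(J^\lambda_{u+\evec_1+j\evec_2},J^\rho_{u+\evec_1+j\evec_2})\}_{j\ge1}$ is a nondegenerate function of $(I^\lambda_{u+\evec_1},I^\rho_{u+\evec_1})$, while the new $x$-axis pair is $\{(I^\lambda_{u+(1+i)\evec_1},I^\rho_{u+(1+i)\evec_1})\}_{i\ge1}$. For the shifted triple to have the original independent-product law you would need $(I^\lambda_{u+\evec_1},I^\rho_{u+\evec_1})$ independent of the shifted $x$-axis pair, i.e.\ the coupled pair sequence to be i.i.d.\ as pairs. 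It is not: with $(I^\lambda,I^\rho)=(B^1,\Dop(B^2,B^1))$ and $J_1=\Sop(B^2,B^1)_1$, the inverse of the involution \eqref{IJw19} gives $B^1_1=(\wt I_1^{-1}+J_1^{-1})^{-1}$ while $\wt I_2=B^1_2(1+B^2_2/J_1)$, both nonconstant functions of $J_1$, so $I^\lambda_{u+\evec_1}=B^1_1$ and $I^\rho_{u+2\evec_1}=\wt I_2$ are correlated; Lemma~\ref{lm:DR5}(v) only provides independence of $\{I^\rho_i\}_{i\le m}$ from $\{I^\lambda_j\}_{j\ge m+1}$, not the reverse ordering you need here. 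Thus the shifted process has dependent $x$- and $y$-axis boundaries while the unshifted one has independent ones, and stationarity fails. Your acknowledged ``main obstacle'' is exactly where this breaks: for a single $\alpha$ the quadrant data can be realized as a restriction of bi-infinite data (Lemma~\ref{m:Lem-I}(a)), but jointly over $\lambda,\rho$ the half-plane makes $(J^\lambda_0,J^\rho_0)$ a function of the boundary at $j\le0$ and of the bulk, and your independent $\Dop$ coupling does not reproduce that dependence. The gamma-additivity coupling you use for $\eta$ in (iii) is a nice simplification of the paper's inverse-CDF device, and (ii) per $\alpha$ is fine, but the central claim (i) needs the half-plane construction, not an independently-chosen $x$-axis boundary.
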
 

\begin{proof} 
We  construct  a  joint partition function  process $(L^\lambda_{x}, L^\rho_{x})_{x\,\in\,u+\Z_{\ge0}\times\Z}$ on the discrete  right half-plane  $u+\Z_{\ge0}\times\Z$ with origin fixed at $u$.  The restriction of this process to the quadrant $u+\Z_{\ge0}^2$ then furnishes the process $(Z^\lambda_{u,\bbullet}\,, Z^\rho_{u,\bbullet})$ whose properties are claimed in the theorem. 
 
In the interior  put i.i.d.\ ${\rm Ga}^{-1}(\sigma)$ weights $\Yvec=\{\Yw_x: x_1>u_1\}$ as before.  (We write some weight configurations with bold symbols to distinguish the notation of this proof from earlier notation.) 
  For $\alpha\in\{\lambda, \rho\}$ let $\Yvec^\lambda=\{Y^\lambda_j\}_{j\in\Z}$ and $\Yvec^\rho=\{Y^\rho_j\}_{j\in\Z}$   be independent  sequences of  i.i.d.\ variables with marginal distributions  $Y^\alpha_j\sim$ ${\rm Ga}^{-1}(\alpha)$, independent of $\Yvec$.   
 From these we define the  boundary weights  $\Jvec^\lambda=\{J^\lambda_{u+j\evec_2}\}_{j\in\Z}$ and  $\Jvec^\rho=\{J^\rho_{u+j\evec_2}\}_{j\in\Z}$ 
on the $y$-axis through $u$ by the equation  
     $(\Jvec^\rho, \Jvec^\lambda)=(\Yvec^\rho, \Dop(\Yvec^\lambda, \Yvec^\rho))$. 
      $\Dop$  is the partition function  operator  from \eqref{m:DSR}.   This gives a pair of coupled sequences $(\Jvec^\rho, \Jvec^\lambda)$.  Marginally  $\{J^\alpha_{u+j\evec_2}\}_{j\in\Z}$ are i.i.d.\ ${\rm Ga}^{-1}(\alpha)$. 

  For $\alpha\in\{\lambda, \rho\}$ define the partition function values on the $y$-axis centered at $u$   by 
\[
L^\alpha_{u}=1 \quad\text{and}\quad 
  \frac{L^\alpha_{u+j\evec_2}}{L^\alpha_{u+(j-1)\evec_2}} = J^\alpha_{u+j\evec_2}\quad \text{for } j\in\Z. 
\]
   Complete the definitions by putting, again for $\alpha\in\{\lambda, \rho\}$ and now for $x\in u+\Z_{>0}\times\Z$, 
\be\label{Ly4}  L^\alpha_{x}=\sum_{j: \tspa j\tspa\le\tspa x_2-u_2}  L^\alpha_{u+j\evec_2}   \tsp Z_{u+\evec_1+j\evec_2,\tsp x} , \quad 
I^\alpha_x=\frac{L^\alpha_{x}}{L^\alpha_{x-\evec_1}} 
\quad\text{and}\quad
J^\alpha_x=\frac{L^\alpha_{x}}{L^\alpha_{x-\evec_2}}. 
\ee
As in \eqref{m:800}, the series converges    because the boundary variables $J^\alpha$ are stochastically larger than the bulk weights.  This follows from the distributional properties established below. 
The evolution in \eqref{Ly4} satisfies a semigroup property from vertical line to line:  
for each $k\ge0$ the values $L^\alpha_x$   for $x_1\ge u_1+k+1$ satisfy 
\be\label{Ly7}    L^\alpha_{x}=\sum_{j: \,j\,\le\, x_2-u_2}    L^\alpha_{u+k\evec_1+j\evec_2} \tspa Z_{u+(k+1)\evec_1+j\evec_2,\tsp x}. 
\ee

For $k\ge 0$, denote the sequences of $J$-ratios  on the vertical line shifted by $k\evec_1$   by 
$\Jvec^{\alpha,k}=\{J^{\alpha,k}_j\}_{j\in\Z}=\{J^\alpha_{u+k\evec_1+j\evec_2}\}_{j\in\Z}$ and the sequences of weights by $Y^k=\{Y^k_j\}_{j\in\Z}=\{\Yw_{u+k\evec_1+j\evec_2}\}_{j\in\Z}$.    $\Jvec^{\alpha,0}$ is the original boundary sequence $\Jvec^\alpha$ we began with.  One verifies inductively that  $\Jvec^{\alpha,k}=\Dop( \Jvec^{\alpha,k-1}, Y^k)$ for each $k\ge 1$ and $\alpha\in\{\lambda, \rho\}$.

 Apply Lemma \ref{lm:DR5} with parameters  $(\sigma, \alpha_1, \alpha_2)=(\sigma,\rho, \lambda)$. Directly from the definition  $(\Jvec^\rho, \Jvec^\lambda)=(\Yvec^\rho, \Dop(\Yvec^\lambda, \Yvec^\rho))$ follows that   $( \Jvec^\rho, \Jvec^\lambda)$ has the distribution of $(I^1, I^2)$ in Lemma \ref{lm:DR5}.   Repeated application of Lemma \ref{lm:DR5}(iv)  implies the distributional equality  $( \Jvec^{\rho,k},  \Jvec^{\lambda,k})\deq( \Jvec^\rho, \Jvec^\lambda)$  for all $k\ge 0$. 
 Since the $J$-ratios have the same joint distribution on each vertical line, 
  the semigroup property \eqref{Ly7} implies  that the entire process of ratios is invariant under translations that keep it in the half-space:  for $z\in \Z_{\ge0}\times\Z$, 
\be\label{L55} \begin{aligned}  
&\{ I^\lambda_{z+x+\evec_1}, I^\rho_{z+x+\evec_1}, J^\lambda_{z+x}, J^\rho_{z+x}: x\in u+\Z_{\ge0}\times\Z\}   \\
&\qquad 
\deq
\{ I^\lambda_{x+\evec_1}, I^\rho_{x+\evec_1}, J^\lambda_x, J^\rho_x: x\in u+\Z_{\ge0}\times\Z\}. 
\end{aligned}\ee
(The index is $x+\evec_1$ rather than $x$ in the $I$-ratios simply because these are not defined on the boundary where $x_1=u_1$.)    Lemma \ref{lm:DR5}(v) gives the property that, for any $x\in u+\Z_{\ge0}\times\Z$, the ratio variables 
\be\label{L47} 
\{ J^\lambda_{x+j\evec_2}:  j\le 0\} \quad\text{and}\quad \{ J^\rho_{x+j\evec_2}: j\ge 1\}\quad
 \text{are mutually independent.}   
\ee

We claim that for $\alpha\in\{\lambda, \rho\}$  and for any new base point  $v\in u+\Z_{\ge0}\times\Z$,
\be\label{L57} \begin{aligned} 
  &\text{$\{I^\alpha_{v+i\evec_1}, J^\alpha_{v+j\evec_2}:i, j\in\Z_{>0}\}$	are mutually independent with marginal distributions}\\[3pt] 
   &\qquad 
   I^\alpha_{v+i\evec_1}\sim\text{\rm Ga}^{-1}(\sigma-\alpha)\quad \text{ and } \quad  J^\alpha_{v+j\evec_2}\sim\text{\rm Ga}^{-1}(\alpha).
\end{aligned} \ee
Since the joint distribution  is shift-invariant, we can take  $v=u$.  As observed  above,  $\Jvec^\alpha$ is a sequence of i.i.d.\ ${\rm Ga}^{-1}(\alpha)$ random variables  by Lemma \ref{lm:DR5}(i). Thus it suffices to prove the marginal statement about $\{I^\alpha_{u+i\evec_1}:i\ge1\}$ because these variables are a function of $\{J^\alpha_{u+j\evec_2},\, \Yw_{u+(i,j)}:i\ge1,  j\le0\}$ which are independent of $\{ J^\alpha_{u+j\evec_2}: j\ge1\}$. 

The claim for $\{I^\alpha_{u+i\evec_1}:i\ge1\}$ follows from proving inductively the following statement for each $n\ge 1$: 
\be\label{L59} \begin{aligned} 
  &\text{$\{I^\alpha_{u+i\evec_1}, J^\alpha_{u+n\evec_1+j\evec_2}: 1\le i\le n, j\le0\}$	are mutually independent with}\\
  & \text{marginal distributions}\quad 
   I^\alpha_{u+i\evec_1}\sim\text{\rm Ga}^{-1}(\sigma-\alpha)\quad \text{ and } \quad  J^\alpha_{u+n\evec_1+j\evec_2}\sim\text{\rm Ga}^{-1}(\alpha).
\end{aligned} \ee
  Begin with the case $n=1$.   From  the inputs given by boundary weights 
$\{I_j=J^\alpha_{u+j\evec_2}:j\le0\}$ and bulk weights  $\{Y_j=\Yw_{u+\evec_1+j\evec_2}: j\le 0\}$,  equation \eqref{m:801}  computes the ratio weights  $\{\wt I_j=J^\alpha_{u+\evec_1+j\evec_2}:j\le0\}$ and equation \eqref{m:J} gives   $J_0=I^\alpha_{u+\evec_1}$. (Note here the switch between ``horizontal'' and ``vertical''.)   Part of  Lemma \ref{lm:DR5}(ii)  then gives exactly statement \eqref{L59} for $n=1$. (The dual bulk weights $\wc Y_j$ that also appear in  Lemma \ref{lm:DR5}(ii)  are not needed here.)   

Continue inductively.  Assume that \eqref{L59} holds for a given $n$. Then feed into the polymer operators boundary weights 
$\{I_j=J^\alpha_{u+n\evec_1+j\evec_2}:j\le0\}$ and bulk weights  $\{Y_j=\Yw_{u+(n+1)\evec_1+j\evec_2}: j\le 0\}$, all independent of $\{I^\alpha_{u+i\evec_1}: 1\le i\le n\}$.   Compute the ratio weights  $\{\wt I_j=J^\alpha_{u+(n+1)\evec_1+j\evec_2}:j\le0\}$ and  $J_0=I^\alpha_{u+(n+1)\evec_1}$.    Lemma \ref{lm:DR5}(ii) extends the validity of \eqref{L59} to  $n+1$.  Claim \eqref{L57} has been verified. 

To prove the full Theorem \ref{thm:st-lpp} on the quadrant $u+\Z_{\ge0}^2$, take the coupled boundary weights 
$\{ I^\alpha_{u+i\evec_1}, J^\alpha_{u+j\evec_2}:  i,j\ge 1, \alpha\in\{\lambda, \rho\}\}$ as constructed above.   The partition function process $\{Z^\alpha_{u,x}: x\in u+\Z_{\ge0}^2\}$ defined by \eqref{Zr11a}--\eqref{Zr12a} is then exactly the same as the restriction  $\{L^\alpha_x: x\in u+\Z_{\ge0}^2\}$   of $L^\alpha$.  To verify this rewrite  \eqref{Zr12a}  as follows for $x$ in the bulk $u+\Z_{>0}^2$:
\begin{align*}
Z^\alpha_{u,\,x}&= \sum_{k=1}^{x_1-u_1}    L^\alpha_{u+k\evec_1}   Z_{u+k\evec_1+\evec_2, \,x}  
	+ 
	\sum_{\ell=1}^{x_2-u_2}   L^\alpha_{u+\ell \evec_2}   Z_{u+\evec_1+\ell \evec_2, \,x} \\
	&= \sum_{k=1}^{x_1-u_1}   \biggl( \; \sum_{j: j\le 0}  L^\alpha_{u+j\evec_2}   \tsp Z_{u+\evec_1+j\evec_2,\tsp u+k\evec_1}  \biggr) \;  Z_{u+k\evec_1+\evec_2, \,x}  
	+  \sum_{\ell=1}^{x_2-u_2}   L^\alpha_{u+\ell \evec_2}   Z_{u+\evec_1+\ell \evec_2, \,x} \\
	&= \sum_{j\le 0}      L^\alpha_{u+j\evec_2}   \sum_{k=1}^{x_1-u_1}  Z_{u+\evec_1+j\evec_2,\tsp u+k\evec_1}   \tspa Z_{u+k\evec_1+\evec_2, \,x} 
	+  \sum_{\ell=1}^{x_2-u_2}   L^\alpha_{u+\ell \evec_2}   Z_{u+\evec_1+\ell \evec_2, \,x} \\
	&=   \sum_{\ell\le x_2-u_2}   L^\alpha_{u+\ell \evec_2}   Z_{u+\evec_1+\ell \evec_2, \,x}
	 \; =\;  L^\alpha_x. 
\end{align*}
Invariance \eqref{Zr17} comes from \eqref{L55}.   The statement in part (i) about independence comes from \eqref{L47}. 
 The first statement of part (ii) of the theorem comes from \eqref{L57} and  the second statement from   \eqref{L59}. 
 
 As the last step we prove part (iii).   The inequality $J^{\rho}_{u+j\evec_2} \le  J^{\lambda}_{u+j\evec_2}$ comes directly from \eqref{as}, due to the construction $(\Jvec^\rho, \Jvec^\lambda)=(\Yvec^\rho, \Dop(\Yvec^\lambda, \Yvec^\rho))$.   Then \eqref{tt} gives the inequality $I^{\lambda}_{u+i\evec_1} \le I^{\rho}_{u+i\evec_1}$ because, in terms of the notation used above, the sequence  $\Ivec^{\alpha,k}= \{I^\alpha_{u+k\evec_1+j\evec_2}\}_{j\in\Z}$  satisfies $\Ivec^{\alpha,k}=\Sop( \Jvec^{\alpha,k-1}, Y^k)$.    
 
 Let $F_\alpha(x)$ be the c.d.f.\ of the ${\rm Ga}^{-1}(\alpha)$ distribution.  It is continuous and strictly increasing in $x\in(0,\infty)$ and strictly increasing in $\alpha$.    Thus $F_{\sigma-\lambda}(I^{\lambda}_{u+i\evec_1})\sim{\rm Unif}(0,1)$, and we define 
 $\eta_{u+i\evec_1}=F_\sigma^{-1}(F_{\sigma-\lambda}(I^{\lambda}_{u+i\evec_1}))\sim{\rm Ga}^{-1}(\sigma)$. 
 $F_{\sigma-\lambda}(I^{\lambda}_{u+i\evec_1})<F_\sigma(I^{\lambda}_{u+i\evec_1})$ implies  $\eta_{u+i\evec_1}<I^{\lambda}_{u+i\evec_1}$ because $F_\sigma^{-1}$ is also strictly increasing.
 
Define  analogously  $\eta_{u+j\evec_2}=F_\sigma^{-1}(F_{\rho}(J^{\rho}_{u+j\evec_2}))$.  
 \end{proof}

\subsection{Wandering exponent}\label{sec:kpz5}

 We quote from \cite{sepp-12-aop-corr}  bounds on the fluctuations  of the inverse-gamma polymer path.  
 The results below are proved in \cite{sepp-12-aop-corr} with  couplings and calculations with the ratio-stationary polymer process, without recourse to the integrable probability features of the inverse-gamma polymer.  
 
 Let  the bulk weights $(Y_x)_{x\tspa\in\tspa\Z^2}$ be i.i.d.\ $\text{\rm Ga}^{-1}(1)$ distributed. 
 Recall the definition of the averaged path distribution $P_{\zevec, v}$ from \eqref{h:P}. 
  On large scales  the $P_{\zevec, v}$-distributed random path $X_\bbullet\in\pathsp_{\zevec, v}$ follows the straight line segment $[\zevec, v]$ between its endpoints.  Typical deviations from the line segment obey the Kardar-Parisi-Zhang (KPZ) exponent $2/3$.  The result below  gives a quantified upper bound.  It is used in the proof of Lemma \ref{lm:far}.

 Given the endpoints $\zevec=(0,0)$ and $v=(v_1,v_2)>\zevec$ on $\Z^2$ and $0<h<1$, let 
 \[
 I_{v,h,b} = [h v-bN^{2/3}\evec_2, \, h v+bN^{2/3}\evec_2] 
 \]
 be the vertical line segment of length $2bN^{2/3}$ centered at $h v$.

\begin{theorem}\label{thm:kpz5} {\rm\cite[Theorem 2.5]{sepp-12-aop-corr}}  Let $0<s,t,\kappa<\infty$ and $0<h<1$.  Then there exist finite  $(s,t, \kappa, h)$-dependent  constants $N_0$, $b_0$ and $C$  such that, whenever $N\ge N_0$,  $v\in\Z_{>0}^2$ satisfies 
\be \abs{v-(Ns,Nt) }_1\le  \kappa N^{2/3} 
 \label{mnNgamma}  \ee
and   $b\ge b_0$,   we have 
\be\label{kpz8}
P_{\zevec,v }\bigl\{  X_\bbullet \cap  I_{v,h,b} = \emptyset  \bigr\} 
 \le Cb^{-3}. 
 \ee
 The parameter vector  $(N_0, b_0,C)$ is   bounded if $(s,t, \kappa, h)$ is restricted to a compact subset of $\R_{>0}^3\times(0,1)$. 
\end{theorem}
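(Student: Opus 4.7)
The plan is to reduce the transverse-fluctuation bound \eqref{kpz8} to a tail bound on the exit point of an auxiliary stationary inverse-gamma polymer with boundary parameter tuned to the characteristic direction $v/|v|_1$. This is the same general scheme used in Lemma~\ref{lem-lb1} to produce bounds of order $r^{-3}$, applied here at the macroscopic endpoint rather than at boundary blocks.

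First, I would observe that every admissible up-right path $X_\bbullet\in\pathsp_{\zevec,v}$ crosses each vertical line $\{x_1=k\}$ with $0\le k\le v_1$ exactly once. Taking $k=\lfloor hv_1\rfloor$ and letting $\tau$ denote the $y$-coordinate of the crossing, the event $\{X_\bbullet\cap I_{v,h,b}=\emptyset\}$ is contained (up to adjusting $b$ by an $O(1)$ constant to absorb the difference between $hv_2$ and its integer part) in $\{\tau-hv_2\ge bN^{2/3}\}\cup\{\tau-hv_2\le -bN^{2/3}\}$. I would bound the first event; the second is symmetric under transposing $\evec_1$ and $\evec_2$.

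Second, I would introduce the stationary polymer $Z^\alpha_{\zevec,\bbullet}$ with boundary parameter $\alpha$ chosen so that $\xi(\alpha)$ matches the direction $v/|v|_1$, as well as the perturbed versions with parameters $\alpha\pm cbN^{-1/3}$ analogous to \eqref{rho*}. The desired event $\{\tau-hv_2\ge bN^{2/3}\}$ under $Q_{\zevec,v}$ can be rewritten through the factorization
\begin{equation*}
Q_{\zevec,v}(\tau=y)\;=\;\frac{Z_{\zevec,(k,y)}\,Z_{(k+1,y),v}}{Z_{\zevec,v}},
\end{equation*}
and via the planar comparison inequalities of Appendix~\ref{app:genpol} (dominating bulk weights on the south/west boundaries by stationary boundary weights, exactly as in the proof of Lemma~\ref{lem ge1}) this ratio is bounded by the corresponding exit-point probability in the stationary polymer $Q^{\alpha+cbN^{-1/3}}_{\zevec,v}$. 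A translation argument as in the proof of Lemma~\ref{lem-lb1} --- shifting the base point by an amount of order $bN^{2/3}$ in $\evec_1$ so that the target endpoint now sits in the new characteristic direction --- converts this into a bound on the probability that the exit point of a \emph{characteristically directed} stationary polymer exceeds $cbN^{2/3}$. That probability is of order $b^{-3}$ by the variance identity for the stationary inverse-gamma partition function together with Chebyshev and a coupling inversion; this is precisely Theorem~\ref{thm:kpz3}.

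The main obstacle is keeping the constants $N_0$, $b_0$, $C$ uniform over all $v$ satisfying \eqref{mnNgamma} when $(s,t,\kappa,h)$ ranges over a compact subset $K\subset\R_{>0}^3\times(0,1)$. This amounts to showing that (i) the characteristic parameter $\alpha(v)=\rho(v/|v|_1)$ is bounded away from $0$ and $1$ uniformly in $v$ and $(s,t,h)\in K$, (ii) the perturbations of size $bN^{-1/3}$ stay in a compact subparameter range as long as $b\le b_1N^{1/3}$ for some admissible $b_1(K)$, and (iii) the $O(r^{-3})$ constant from Theorem~\ref{thm:kpz3} depends continuously on the base parameter. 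Items (i) and (ii) are immediate from the regularity statement of Lemma~\ref{lem:psi}, and (iii) is a bookkeeping matter in the stationary-polymer variance calculation; together they give uniformity on $K$.
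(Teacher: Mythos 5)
This statement is a quotation: the paper explicitly cites it as \cite[Theorem 2.5]{sepp-12-aop-corr} and the preamble to Appendix~\ref{sec:kpz5} states that the result ``below [is] proved in \cite{sepp-12-aop-corr} with couplings and calculations with the ratio-stationary polymer process.'' The paper under review contains no proof to compare against, so what you have written is a reconstruction of the proof in the cited reference rather than an alternative to anything in this manuscript.

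At the level of strategy your sketch is pointed in the right direction --- reduce the midpath transverse fluctuation of the bulk polymer to an exit-point tail of a characteristically tuned stationary inverse-gamma polymer --- and this is indeed the type of argument \cite{sepp-12-aop-corr} uses (your Theorem~\ref{thm:kpz3} target is Theorem~2.1 there, and Theorem~2.5 is derived from it). However the central step is glossed over. The factorization $Q_{\zevec,v}(\tau=y)=Z_{\zevec,(k,y)}Z_{(k+1,y),v}/Z_{\zevec,v}$ is correct, but the assertion that the resulting ratio ``is bounded by the corresponding exit-point probability in the stationary polymer $Q^{\alpha+cbN^{-1/3}}_{\zevec,v}$'' by the comparison inequalities of Appendix~\ref{app:genpol} is not a direct consequence of those inequalities. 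Two specific issues: (1) Converting a crossing at $x_1=k$ into an exit-point event requires the intermediate-exit identity \eqref{m:830} (or equivalently, restating the event as an $\ex_{\zevec,w,v}$-type quantity and passing to the nested polymer $Q^{(\zevec)}_{w,v}$); you never invoke this and instead gesture at a direct inequality between $Q_{\zevec,v}$ and $Q^{\alpha\pm cbN^{-1/3}}_{\zevec,v}$. (2) The monotone comparison of Theorem~\ref{thm:st-lpp}(iii) and \eqref{m:770} is one-sided --- the stationary boundary weights always stochastically dominate the bulk weights --- so the two-sided sandwich you appear to need cannot be obtained by a plain monotone coupling; the paper's own Lemma~\ref{lem ge1} achieves the needed two-sided control only by working with restricted partition functions conditioned on the exit-point event and paying a $(1-y)^{\pm1}$ factor, and an analogous device is needed here. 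These are genuinely nontrivial points in \cite{sepp-12-aop-corr} and cannot be dismissed as a bookkeeping matter. The uniformity statement over compacta, which you handle via Lemma~\ref{lem:psi}, is the least problematic part.
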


We also state a KPZ bound on the exit point of the stationary polymer used in the proof of Lemma \ref{lem-lb1}.  Take a  parameter $\rho\in(0,1)$ with characteristic direction $\xi(\rho)$ of \eqref{XtR}.  Consider the ratio-stationary inverse-gamma polymer with quenched path measure $Q^\rho_{\zevec,v}$ and annealed measure $P^\rho_{\zevec,v}(\cdot)=\E[Q^\rho_{\zevec,v}(\cdot)]$, as developed in Section \ref{sec:invga}. 

\begin{theorem}\label{thm:kpz3}  
Let $\kappa\in(0,\infty)$. 
There exist finite  $(\rho, \kappa)$-dependent  constants $N_0$, $b_0$ and $C$  such that, whenever $N\ge N_0$,  $v\in\Z_{>0}^2$ satisfies 
\be \abs{v- N\xi(\rho)  }_1\le  \kappa N^{2/3} 
 \label{m:980}  \ee
and   $b\ge b_0$,  we have 
\be\label{kpz10}
P_{\zevec,v }^\rho\bigl\{  \ex_{\zevec, v} \ge b N^{2/3}   \bigr\} 
 \le Cb^{-3}. 
 \ee
 The parameter vector  $(N_0, b_0,C)$ is   bounded if $(\rho, \kappa)$ is restricted to a compact subset of $(0,1)\times\R_{>0}$. 
\end{theorem}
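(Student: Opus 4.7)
The plan is to follow the strategy of \cite{sepp-12-aop-corr}, of which this statement is a mild quantitative extension. First, by the shift-invariance of the ratio-stationary polymer (Theorem \ref{thm:st-lpp}(i)) together with the smoothness of $\xi(\rho)$ given by Lemma \ref{lem:psi}, I would reduce to the case $v=\lfloor N\xi(\rho)\rfloor$. The $O(N^{2/3})$ perturbation allowed by \eqref{m:980} produces a change in the boundary parameter of order $N^{-1/3}$, which propagates only into the constants $C$ and $b_0$; uniformity in $\rho$ on a compact subset of $(0,1)$ is guaranteed by the uniform bounds on $\psi_0,\psi_1$ and on $\fo,\fo^{-1}, B$ from Lemma \ref{lem:psi}.

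The core step is the variance identity for the stationary inverse-gamma polymer (Theorem 3.3 of \cite{sepp-12-aop-corr}). Writing $\ex_+=\ex\vee 0$ and $\ex_-=(-\ex)\vee 0$ for the positive and negative parts of the exit point, one has
\[
\Vvar\bigl(\log Z^\rho_{\zevec,v}\bigr)
 = v_1\psi_1(1-\rho)+v_2\psi_1(\rho)-2\psi_1(1-\rho)\,\E^\rho[\ex_+]-2\psi_1(\rho)\,\E^\rho[\ex_-].
\]
This formula is proved by differentiating twice in $\rho$ the explicit mean $\E[\log Z^\rho_{\zevec,v}]=-v_1\psi_0(1-\rho)-v_2\psi_0(\rho)+\text{boundary corrections}$ (which follows from the Burke-type property embedded in Theorem \ref{thm:st-lpp}) and matching with the pathwise derivative obtained from differentiating through the boundary weights. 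Combined with the \emph{a priori} upper bound $\Vvar(\log Z^\rho_{\zevec,v})\le CN^{2/3}$ of \cite[Theorem 2.4]{sepp-12-aop-corr}, the identity yields $\E^\rho[\ex_+]\le CN^{2/3}$ in the characteristic direction, which is the mean-level counterpart of \eqref{kpz10}.

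The cubic tail bound comes from a parameter-perturbation argument.  Fix $b\ge b_0$ and set $\rho'=\rho+\tau bN^{-1/3}$ for a small $\tau=\tau(\rho,\kappa)>0$.  By Lemma \ref{lem:psi}, the characteristic direction $\xi(\rho')$ is rotated away from $\xi(\rho)$ by an amount $\asymp bN^{-1/3}$, so that under $P^{\rho'}_{\zevec,v}$ the event $\{\ex\ge bN^{2/3}\}$ becomes typical in the sense that its complement is a large-deviation event for the $\rho'$-polymer.  Because the stationary boundary weights of $Z^\rho$ and $Z^{\rho'}$ are products of independent inverse-gamma variables whose parameters differ by $\tau bN^{-1/3}$, the Radon-Nikodym derivative between the two laws restricted to the first $bN^{2/3}$ boundary edges is controlled by an exponential functional with explicit moments. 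Applying the mean exit-point bound of the second paragraph to $Z^{\rho'}$ and transferring through this Radon-Nikodym comparison (together with Markov's inequality in $b$) produces $\P^\rho(\ex\ge bN^{2/3})\le C b^{-3}$, as desired; this is exactly the change-of-measure step carried out in \cite[Section 5]{sepp-12-aop-corr}.

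The main technical obstacle is the uniformity of $(N_0,b_0,C)$ on compact sets of $(\rho,\kappa)$: one must verify that the constants appearing in the variance identity, in the exponent tilt of the Radon-Nikodym derivative, and in the geometric rotation of the characteristic direction are all bounded uniformly. This reduces to the smoothness and strict positivity of $\psi_0,\psi_1$ on compact subsets of $(0,1)$ together with Lemma \ref{lem:psi}, and hence causes no genuine difficulty.
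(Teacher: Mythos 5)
The paper does not prove Theorem~\ref{thm:kpz3}: as the preamble to Appendix~\ref{sec:kpz5} states, this bound (like Theorem~\ref{thm:kpz5}) is quoted from \cite{sepp-12-aop-corr}, and the paper's only content here is the citation. So there is no ``paper's proof'' to compare against; you are instead sketching the argument of the cited source. Your outline does name the right circle of ideas from that source --- the variance identity for the stationary inverse-gamma polymer and a parameter-perturbation/coupling argument --- and the paper itself confirms this, saying the results ``are proved in \cite{sepp-12-aop-corr} with couplings and calculations with the ratio-stationary polymer process.''

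That said, two concrete things in your sketch are off, and the second one would actually prevent the proof from closing. First, the variance identity is misstated. In the source the non-linear correction is not $-2\psi_1(1-\rho)\E[\ex_+]-2\psi_1(\rho)\E[\ex_-]$ but (in the right-tail version) $+2\,\E\bigl[\sum_{k=1}^{\ex_+}\bigl(\log I_k+\psi_0(1-\rho)\bigr)\bigr]$, i.e.\ the expectation of the \emph{centered boundary sum up to the exit time}, and the two $\psi_1$-terms enter with opposite signs so that they nearly cancel in the characteristic direction. As you wrote it, $\Vvar(\log Z^\rho)\le C N^{2/3}$ would force $\E[\ex_\pm]$ to be of order $N$, which is the opposite of what one is trying to prove. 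Second, the route you give from the mean bound to the cubic tail does not produce $b^{-3}$: Markov applied to $\E[\ex_+]\le CN^{2/3}$ gives only $b^{-1}$, and the Radon--Nikodym transfer you describe (between $\rho$ and $\rho'=\rho+\tau bN^{-1/3}$ boundary laws, restricted to $bN^{2/3}$ boundary edges) has a density of size $\exp\bigl(\Theta(b^2N^{1/3})\bigr)$, so it cannot be used to transport probability estimates directly. What the source actually does is a \emph{pathwise} comparison: under the monotone coupling of boundary weights as in Theorem~\ref{thm:st-lpp}(iii), $Q^\rho(\ex_+\ge u)\le\bigl(\prod_{i\le u}I^\rho_i/I^{\rho'}_i\bigr)\,Z^{\rho'}/Z^\rho$, and then one applies Chebyshev to the annealed law of the logarithm of this random upper bound, using the variance identity and a variance bound for $\log(Z^{\rho'}/Z^\rho)$ in the off-characteristic direction; the $b^{-3}$ comes out of balancing the mean shift $\asymp b^2N^{1/3}$ against the off-characteristic variance $\asymp bN^{2/3}$. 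If you want a self-contained write-up rather than a citation, this coupling-then-Chebyshev argument is the one to reproduce, and the ``Markov in $b$'' step should be replaced by a genuine moment estimate obtained by iterating the tail bound.
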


\section{Bound on the running maximum of a random walk}\label{sec:rw}	
In this appendix we quote a random walk estimate from \cite{busa-sepp-rw}, used in the proof of Lemma \ref{lm:close}.  
	For $\alpha, \beta>0$  let  $S^{\alpha,\beta}_m=\sum_{i=1}^m X^{\alpha,\beta}_i$  denote the random walk with   i.i.d.\ steps $\{X^{\alpha,\beta}_i\}_{i\geq 1}$ specified by 
	\[	X^{\alpha,\beta}_1 \; \deq \; \log G^{\alpha}-\log G^{\beta} \]
	with two  independent gamma variables $G^{\alpha}\sim{\rm Ga}(\alpha)$   and $G^{\beta}\sim{\rm Ga}(\beta)$ on the right.   Denote the mean step by  
$\mu_{\alpha, \beta}=\E(X^{\alpha,\beta}_1)=\psi_0(\alpha)-\psi_0(\beta)$.

Fix a compact interval $[\rho_{\rm min}, \rho_{\rm max}]\subset(0,\infty)$.  Fix a positive constant $a_0$ and let $\{s_N\}_{N\ge 1}$ be a sequence of nonnegative reals such that $0\le s_N\le a_0(\log N)^{-3}$. 
Define a set of admissible pairs 
\[  \cS_N=\{(\alpha, \beta):  \alpha, \beta\in [\rho_{\rm min}, \rho_{\rm max}], \; 
-s_N\le \alpha-\beta \le 0\}. 
\] 
 The point of the theorem below is that for $(\alpha, \beta)\in\cS_N$ the walk $\{S^{\alpha, \beta}_m\}_{1\le m\le N}$ has a small enough negative drift that we can establish a positive lower bound for its running maximum. 
 	
 	\begin{theorem}{\rm\cite[Corollary 2.8]{busa-sepp-rw}}\label{thm:lm2}  In the setting described above   the bound below holds for all  $N\ge N_0$,  $(\alpha, \beta)\in\cS_N$,  and $x\ge(\log N)^2$: 
		\begin{align*}
		\mathbb{P}\big\{\tspa\max_{1\leq m \leq N}S^{\alpha,\beta}_m\le  x\big\}\leq C\tsp x \tsp (\log N)(\mu_{\alpha, \beta} \vee N^{-1/2}\tspb).
		\end{align*}
The constants $C$ and $N_0$ depend on $a_0$, $\rho_{\rm min}$, and  $\rho_{\rm max}$.  		
	\end{theorem}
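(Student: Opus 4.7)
My strategy is to transfer the random walk estimate to the corresponding Brownian motion computation, where the reflection principle supplies an explicit formula, and then pay a logarithmic price for the discretization. The form $Cx(\log N)(|\mu_{\alpha,\beta}|\vee N^{-1/2})$ matches the Brownian answer up to that $\log N$ factor, which signals that the proof will absorb the coupling error into a small inflation of~$x$.

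First I would establish the Brownian analogue. For $W_t=\sigma B_t+\nu t$ with $\nu\le 0$ and $\sigma>0$ bounded, the reflection principle gives
\[
\P\Big\{\max_{0\le t\le T} W_t\le y\Big\}=\Phi\Big(\frac{y-\nu T}{\sigma\sqrt T}\Big)-e^{2\nu y/\sigma^2}\,\Phi\Big(\frac{-y-\nu T}{\sigma\sqrt T}\Big).
\]
A first-order Taylor expansion in $y$, valid when $y$ is small compared with $\sigma\sqrt T$, reduces the right side to a constant multiple of $y(|\nu|\vee T^{-1/2})$, uniformly over $\nu\le 0$ in a bounded range and $\sigma$ in a compact subset of $(0,\infty)$. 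Next I would invoke a strong approximation. The step $X_1^{\alpha,\beta}=\log G^\alpha-\log G^\beta$ has moment generating function $\Gamma(\alpha+t)\Gamma(\beta-t)/[\Gamma(\alpha)\Gamma(\beta)]$, analytic on a disk around $t=0$ whose radius is bounded below uniformly for $(\alpha,\beta)\in[\rho_{\min},\rho_{\max}]^2$. Sakhanenko's (or KMT) strong approximation then supplies a coupling of $S_n^{\alpha,\beta}$ with a Brownian motion $W$ of matching drift $\mu_{\alpha,\beta}$ and variance $\sigma_{\alpha,\beta}^2$ satisfying $\max_{n\le N}|S_n^{\alpha,\beta}-W_n|\le C_1\log N$ off an event of probability at most $N^{-10}$, with constants uniform over $\cS_N$. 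On the good event $\{\max_n S_n^{\alpha,\beta}\le x\}\subseteq\{\max_{t\le N}W_t\le x+C_1\log N\}$, and since $x\ge(\log N)^2$ this additive correction is at worst a factor $2$. Combining with Step~1 gives $Cx(|\mu_{\alpha,\beta}|\vee N^{-1/2})$. The extra $\log N$ factor in the statement is slack that arises naturally if one prefers a softer argument than KMT---for instance, a dyadic decomposition of $[1,N]$ into $O(\log N)$ blocks of geometrically growing length and a union bound within each block.

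The principal technical obstacle is to enforce uniformity of all constants over the two-parameter family $(\alpha,\beta)\in\cS_N$ as $N\to\infty$: the step variance $\sigma_{\alpha,\beta}^2=\psi_1(\alpha)+\psi_1(\beta)$, the Sakhanenko moments, and the radius of analyticity of the moment generating function must all be controlled uniformly on $[\rho_{\min},\rho_{\max}]^2$, which follows from the smoothness of $\log\Gamma$ on $(0,\infty)$. A secondary subtlety is that the drift is permitted to vanish (when $\alpha=\beta$), so the bound must degrade gracefully into the zero-drift case; the $\vee N^{-1/2}$ is precisely what handles this, and the hypothesis $|\alpha-\beta|\le s_N\le a_0(\log N)^{-3}$ guarantees that the drift accumulation $N|\mu_{\alpha,\beta}|$ over the time horizon does not exceed $\sqrt N$, so the Brownian approximation remains in the diffusive regime where the Taylor expansion in Step~1 is valid.
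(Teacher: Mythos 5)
This statement is quoted verbatim from an external technical note (Corollary~2.8 of \cite{busa-sepp-rw}); the present paper contains no proof to compare against, so I can only assess your proposal on its own terms and against the broad hints the paper gives. Your reflection--principle computation for the Brownian analogue is correct and, worked out carefully, gives the clean upper bound $\P\{\max_{t\le T}W_t\le y\}\le C(\sigma)\,y\,(|\nu|\vee T^{-1/2})$ for \emph{all} $y>0$ and $\nu\le0$, with no restriction to the diffusive regime. Combined with a uniform KMT coupling and absorbing the $O(\log N)$ coupling error (plus the $O(\sqrt{\log N})$ oscillation of $W$ between integer times, which you elide: the discrete max $\max_n W_n$ is \emph{smaller} than the continuous max, so you must pay a union-bound cost to pass from one to the other) into $x\ge(\log N)^2$, this would indeed give the claimed bound---in fact without the extra $\log N$ factor, which is why the result as stated is consistent with your route being overkill. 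One quantitative misstep: you claim $|\alpha-\beta|\le s_N\le a_0(\log N)^{-3}$ forces $N|\mu_{\alpha,\beta}|\le\sqrt N$; in fact $N|\mu_{\alpha,\beta}|$ can be of order $N(\log N)^{-3}\gg\sqrt N$. Fortunately, as noted above, the Brownian bound needs no such restriction, so the error is harmless. The genuine gap is the uniformity over $(\alpha,\beta)\in[\rho_{\min},\rho_{\max}]^2$ of the Sakhanenko/KMT constants, which you flag but do not prove; it is plausible from smoothness of $\log\Gamma$ but is not an off-the-shelf fact.

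On the comparison: given that this paper goes out of its way to avoid integrable-probability and strong-approximation machinery (``our proof does not rely on any integrable probability features''), and that the companion note is by the same authors, the cited proof almost certainly takes a more elementary route than KMT---very likely the kind of dyadic or block decomposition over $O(\log N)$ scales that you yourself suggest as the natural source of the slack $\log N$ factor, or a direct martingale/optional-stopping argument exploiting explicit properties of the log-gamma step distribution. Your approach buys a cleaner bound at the cost of invoking a deep theorem whose uniformity you would still have to verify; a block decomposition is longer but self-contained and manifestly uniform over a compact parameter set.
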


	\medskip 
	
	\small
	\bibliography{Timo_old_bib}
	\bibliographystyle{plain}
\end{document}